\setlist[enumerate,1]{(1)}
\setlist{nosep}
\theoremstyle{definition}
\newtheorem{defn}[subsection]{Definition}
\newtheorem{notation}[subsection]{Notation}
\theoremstyle{plain}
\newtheorem{theorem}[subsection]{Theorem}
\newtheorem{lemma}[subsection]{Lemma}
\newtheorem{prop}[subsection]{Proposition}
\newtheorem{cor}[subsection]{Corollary}
\theoremstyle{remark}
\newtheorem{remark}[subsection]{Remark}
\newcommand{\Z}{\mathbb{Z}}
\newcommand{\F}{\mathbb{F}}
\newcommand{\Q}{\mathbb{Q}}
\newcommand{\cF}{\mathcal{F}}
\newcommand{\cG}{\mathcal{G}}
\newcommand{\cO}{\mathcal{O}}
\newcommand{\cX}{\mathcal{X}}
\newcommand{\Img}{\mathrm{Im}}
\newcommand{\Spec}{\mathrm{Spec}}
\newcommand{\Unip}{\mathrm{Unip}}
\newcommand{\GL}{\mathrm{GL}}
\newcommand{\Br}{\mathrm{Br}}
\newcommand{\Fr}{\mathrm{Fr}}
\newcommand{\cl}{\mathrm{cl}}
\newcommand{\lisse}{\mathrm{lisse}}
\newcommand{\naive}{\mathrm{naive}}
\newcommand{\qu}{\mathrm{qu}}
\newcommand{\QU}{\mathrm{QUnip}}
\newcommand{\Gal}{\mathrm{Gal}}
\newcommand{\tr}{\mathrm{tr}}
\newcommand{\Ker}{\mathrm{Ker}}
\newcommand{\gr}{\mathrm{gr}}
\newcommand{\Zlb}{\overline{\Z_\ell}}
\newcommand{\Qlb}{\overline{\Q_\ell}}
\newcommand{\Flb}{\overline{\F_\ell}}
\newcommand{\Flib}{\overline{\F_{\ell_i}}}
\newcommand{\Qli}{\Q_{\ell_i}}
\newcommand{\Qlib}{\overline{\Q_{\ell_i}}}
\newcommand{\Zlib}{\overline{\Z_{\ell_i}}}
\newcommand{\simto}{\xrightarrow{\sim}}
\numberwithin{equation}{section}
\begin{document}

\title{Compatible systems and ramification}
\author{Qing Lu\thanks{School of Mathematical Sciences, Beijing Normal University, Beijing
100875, China; School of Mathematical Sciences, University of the Chinese
Academy of Sciences, Beijing 100049, China; email: \texttt{qlu@bnu.edu.cn}.
Partially supported by National Natural Science Foundation of China Grants
11371043, 11501541.}\and Weizhe Zheng\thanks{Morningside Center of
Mathematics and Hua Loo-Keng Key Laboratory of Mathematics, Academy of
Mathematics and Systems Science, Chinese Academy of Sciences, Beijing
100190, China; University of the Chinese Academy of Sciences, Beijing
100049, China; email: \texttt{wzheng@math.ac.cn}. Partially supported by
National Natural Science Foundation of China Grants 11621061, 11688101,
11822110; National Center for Mathematics and Interdisciplinary Sciences,
Chinese Academy of Sciences.}
\thanks{Mathematics Subject Classification 2010: 14F20 (Primary); 11G25, 11S15
 (Secondary).}}
%\date{}
\maketitle

\begin{abstract}
We show that compatible systems of $\ell$-adic sheaves on a scheme of
finite type over the ring of integers of a local field are compatible
along the boundary up to stratification. This extends a theorem of Deligne
on curves over a finite field. As an application, we deduce the
equicharacteristic case of classical conjectures on $\ell$-independence
for proper smooth varieties over complete discrete valuation fields.
Moreover, we show that compatible systems have compatible ramification. We
also prove an analogue for integrality along the boundary.
\end{abstract}

\section{Introduction}\label{s.0}

Let $S=\Spec(\cO_K)$ be the spectrum of an excellent Henselian discrete
valuation ring $\cO_K$ of finite residue field $k=\F_q$ of characteristic
$p$. Let $K$ be the fraction field of $\cO_K$. Given a scheme $X$ of finite
type over $S$ and a prime $\ell\neq p$, we let $K(X,\Qlb)$ denote the
Grothendieck group of constructible $\Qlb$-sheaves on $X$, where $\Qlb$
denotes an algebraic closure of $\Q_\ell$. We fix a field $Q$, an index set
$I$, and for each $i\in I$, a prime number $\ell_i$ and an embedding
$\iota_i\colon Q\to {\Qlib}$. Let $\lvert X\rvert$ be the set of locally
closed points of $X$. In other words, $\lvert X\rvert =\lvert X_k\rvert\cup
\lvert X_K\rvert$ is the union of the sets of closed points of the two
fibers. Note that the residue field of $x\in \lvert X\rvert$ is a finite
extension of $k$ or $K$, and the local Weil group $W(\bar x/x)\subseteq
\Gal(\bar x/x)$ is defined for any geometric point $\bar x$ above $x$. We
say that a system $(L_i) \in \prod_{i\in I}K(X,\Qlib)$ is \emph{compatible}
if for every $\bar x$ above $x\in \lvert X\rvert$, and for every $F\in
W(\bar x/x)$, the local traces are compatible: there exists $a\in Q$ such
that $\tr(F,(L_i)_{\bar x})=\iota_i(a)$ for all $i\in I$ \cite[D\'efinition
4.13]{Zind}.

In this paper, we study the compatibility of compatible systems along the
boundary. We let $K_\lisse(X,\Qlb)$ denote the Grothendieck group of lisse
$\Qlb$-sheaves on~$X$.

\begin{defn}\label{d.1}
Let $\bar X$ be a normal scheme of finite type over $S$ and let $X$ be a
dense open subscheme. We say that $(L_i) \in \prod_{i\in
I}K_\lisse(X,\Qlib)$ is \emph{compatible on $\bar X$}, if for every $x\in
\lvert \bar X\rvert$, every geometric point $\bar a$ of $X_{(x)}\colonequals
\bar X_{(x)}\times_{\bar X} X$, and every $F\in W(X_{(x)},\bar a)$,
$(\tr(F,(L_i)_{\bar a}))_{i\in I}$ is compatible. Here $\bar X_{(x)}$
denotes the Henselization of $\bar X$ at $x$, and $W(X_{(x)},\bar a)$
denotes the Weil group, namely the inverse image of $W(\bar x/x)\subseteq
\Gal(\bar x/x)$ by the surjective homomorphism $\pi_1(X_{(x)},\bar a)\to
\pi_1(\bar X_{(x)},\bar a)\simeq \Gal(\bar x/x)$.
\end{defn}

In the case where $\bar X$ is an integral smooth curve over $k$ or $K$ and
$x\in \bar X-X$, $X_{(x)}$ is the spectrum of a field extension of the
function field $E$ of $X$ and its fundamental group is the decomposition
group of $E$ at $x$, subgroup of the Galois group of $E$.

We call $X\subseteq\bar X$ a normal compactification over $S$ if $\bar X$ is
normal, proper over~$S$, and contains $X$ as a dense open subscheme. Our
first result is that compatible systems are compatible along the boundary up
to stratification.

\begin{theorem}\label{t.main}
Let $X$ be a scheme of finite type over $S$ and let $(L_i) \in \prod_{i\in
I}K_\lisse(X,\Qlib)$ be a compatible system with $I$ finite. Then there
exists a finite stratification $X=\bigcup_\alpha X_\alpha$ by normal
subschemes such that each $X_\alpha$ admits a normal compactification $\bar
X_\alpha$ over $S$ such that $(L_i|_{X_\alpha})_{i\in I}$ is compatible on
$\bar X_{\alpha}$.
\end{theorem}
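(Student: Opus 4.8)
\medskip
\noindent\textbf{Proof strategy.}
The plan is to peel off, by Noetherian d\'evissage, a dense open subscheme on which the $L_i$ extend well, and to reduce the compatibility along the remaining boundary to Deligne's theorem on curves over a finite field by restricting to curves. For the reductions: since $K_\lisse(X,\Qlib)$ depends only on $X_{\mathrm{red}}$ and a finite stratification of $X_{\mathrm{red}}$ pulls back to one of $X$, we may take $X$ reduced; passing to the normal locus of $X$ --- open, dense, a disjoint union of normal integral schemes, with proper closed complement handled by Noetherian induction --- we reduce to $X$ normal and integral. It then suffices to produce one dense open $U\subseteq X$, automatically normal, together with a normal compactification $\bar U$ of $U$ over $S$ for which $(L_i|_U)_{i\in I}$ is compatible on $\bar U$: the complement $X\setminus U$ is treated by the inductive hypothesis and the stratifications combine. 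A normal compactification over $S$ always exists --- Nagata produces a proper $\bar X\supseteq X$ over $S$, and its normalization is finite over $\bar X$ (all schemes being excellent), proper over $S$, and still contains $X$ densely --- so the only genuine task is to arrange the compatibility, i.e.\ to control the ramification of the finitely many $L_i$ along the boundary.

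Next I would pass to a normal crossings boundary. As resolution of singularities is unavailable here, I would apply a prime-to-$\ell$ alteration of $\bar X$ in the sense of de Jong and Gabber, relative to $\bar X\setminus U$ (for $U$ suitably shrunk) and the special fibre: this yields a regular scheme $\bar X'$, proper over $S'=\Spec\cO_{K'}$ for a finite extension $K'/K$, together with a proper surjective generically finite morphism $h\colon\bar X'\to\bar X$, finite \'etale of degree prime to $\ell$ over $U$, such that, writing $X'=h^{-1}(U)$, the divisor $\bar X'\setminus X'$ --- enlarged by the special fibre when $\bar X'$ dominates $S'$ --- has strict normal crossings. Then $\bar X'$ is a normal compactification of $X'$ over $S$, and $(L_i|_{X'})=(h^*(L_i|_U))$, and one checks that compatibility along the boundary descends along $h$, using that $h$ is finite \'etale of degree prime to $\ell$ over $U$ and that the local Weil groups upstairs surject onto open subgroups of the ones downstairs. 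So it is enough to prove that $(L_i|_{X'})_{i\in I}$ is compatible on $\bar X'$; there the local Weil group $W(X'_{(x)},\bar a)$ at a boundary point is controlled by $W(\bar x/x)$, the local monodromy around the branches of $\bar X'\setminus X'$ through $x$, the wild inertia, and --- when $x$ lies in the special fibre --- the local inertia in the arithmetic direction.

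Finally I would reduce to curves. Given a boundary point $x$ of $\bar X'$ and an element $F\in W(X'_{(x)},\bar a)$, there should be a regular curve $\tilde C\to\bar X'_{(x)}$ through $x$, transverse to the boundary (and to the special fibre), with $\tilde C^{\circ}\colonequals\tilde C\times_{\bar X'}X'$ dense in $\tilde C$, such that $F$ lies in the image of $W(\tilde C^{\circ}_{(x)},\bar a)$; this is a Bertini-type statement for fundamental groups, using that a transverse curve already detects the tame and wild local monodromy around a branch of a normal crossings divisor. Spreading $\tilde C$ out to a curve in $\bar X'$ and normalizing the closure of its trace on $X'$, one obtains a regular --- hence normal --- compactification over a finite extension of $K'$, on which $\tr(F,(L_i)_{\bar a})$ is computed; so the theorem reduces to the compatibility of $(L_i|_{C^{\circ}})_{i\in I}$ on its compactification for every smooth curve $C^{\circ}$ arising this way. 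When $C^{\circ}$ is a curve over the finite field $k'$ (or a finite extension thereof) this is precisely Deligne's theorem recalled above. The remaining cases --- curves over the local field $K'$, and one-dimensional schemes dominating $S'$ --- involve local fields of mixed characteristic, where Deligne's finite-field argument does not apply directly; one extends the method there, using in particular the compatibility of $(L_i)$ already given on $X$, which pins down the traces of all Frobenius-type elements.

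I expect the main obstacle to be twofold. First, the arithmetic: Deligne's argument is over finite fields, whereas the generic fibre of $X$ is a variety over the local field $K$ and horizontal curves bring in mixed-characteristic local fields, so establishing compatibility of local traces on the whole local Weil group in that setting --- beyond what the finite residue field forces --- is the genuinely new input and presumably where most of the effort lies. Second, uniformity: a single finite stratification of $X$ must suffice, so the choice of $U$ and of $\bar U$ must make the ramification of all the $L_i$ along the boundary ``constant'' along strata, which needs boundedness of the relevant ramification invariants together with spreading-out; and both the descent along the alteration and the Bertini-type surjectivity for fundamental groups have to be secured in the presence of wild ramification and over the possibly finite residue fields occurring on the boundary.
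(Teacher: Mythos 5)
Your outer d\'evissage (reduce to $X$ integral normal, find one good dense open by Noetherian induction, use de Jong--Gabber alterations to reach a normal crossings boundary) matches the paper's. But the core of the argument --- how one actually establishes compatibility of the traces at a boundary point of the normal crossings compactification --- is missing, and the route you sketch for it would not work. Your Bertini-type claim, that every $F\in W(X'_{(x)},\bar a)$ lies in the image of the local Weil group of a single transverse curve, fails at the deeper strata: at a point where $d\ge 2$ branches of the boundary meet (or at a closed point of the special fibre), the tame local monodromy is $\prod_\ell\Z_\ell(1)^d$ by Abhyankar's lemma, and a transverse curve only sees a rank-one piece of it, so most elements of the local Weil group are not captured by any curve. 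And even where the reduction to curves succeeds, you explicitly defer the cases of curves over the local field $K$ and of one-dimensional schemes dominating $S$ ("one extends the method there"), which is precisely the hard content of the theorem, not a routine extension of Deligne's finite-field argument. The paper avoids curves entirely: after passing to a Galois cover trivializing the $\cF_i$ mod $2\ell$ (which, via the arithmetic local monodromy lemma, forces the tame inertia $I_t$ to act unipotently, so that the semisimplified restriction to $X_{(x)}$ descends to a class $M_i$ on the residual gerbe), it invokes the stability of compatible systems under $Rj_*$ over $S$ (Theorem \ref{t.six}, i.e.\ Gabber's theorem and its local-field extension from \cite{Zind}), the projection formula, and absolute purity to get $\tr(F,(Rj_*L_i)_{\bar x})=(1-q^n)^d\cdot\tr(F,(M_i)_{\bar x})$, whence the traces of $M_i$ for Frobenius degree $n\neq 0$, with degree $0$ handled by a rationality lemma. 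That mechanism is the missing idea in your proposal.

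A secondary but genuine issue is your descent along the alteration $h$. Knowing compatibility on an open subgroup of the local Weil group does not yield it on the whole group element by element (the trace of $F^m$ does not determine the trace of $F$), so "$h$ finite \'etale of degree prime to $\ell$ over $U$" is not enough. The paper circumvents this by taking a \emph{Galois} alteration, working with the quotient stacks $[\bar Z/H]$ (where compatibility is defined via the residual gerbes, using the Deligne--Lusztig averaging argument), and then passing to $X'=Z/H$, which is a universal homeomorphism over a dense open of $X$ so that nothing is lost on fundamental groups there; Lemma \ref{l.uh} then transports the compactification back. You would need some equivalent device to make your descent step rigorous.
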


We refer to Corollary \ref{c.key} for the equivalent statement that
compatible systems are compatible along the boundary up to modification. In
the case of a curve over a finite field we recover a theorem of Deligne
\cite[Th\'eor\`eme 9.8]{Deligne} (see Corollary \ref{c.main}). Takeshi Saito
gave an example of a compatible system on a smooth surface $X$ that is not
compatible on a given smooth compactification $\bar X$ (private
communication with Hiroki Kato).

Theorem \ref{t.main} implies the following valuative criterion for
compatible systems, analogous to Gabber's valuative criterion for Vidal's
ramified part of the fundamental group \cite[Section 6.1]{Vidal2}.

\begin{cor}\label{c.val}
Let $X$ be a scheme of finite type over $S$ and let $(L_i)\in \prod_{i\in
I}K(X,\Qlib)$. Consider commutative squares of schemes
\begin{equation}\label{e.val}
\xymatrix{\eta\ar@{^{(}->}[d] \ar[r] & X\ar[d]\\
V\ar[r] & S}
\end{equation}
where $V=\Spec(\cO_L)$ with $\cO_L$ a Henselian valuation ring, and
$\eta=\Spec(L)$ is the generic point of $V$. Let $\bar \eta\to \eta$ be a
geometric point and let $t\in V$ be the closed point.
\begin{enumerate}
\item $(L_i)_{i\in I}$ is a compatible system if and only if for every
    commutative square \eqref{e.val} with $t$ quasi-finite over $S$,
    $(\tr(F,(L_i)_{\bar \eta}))_{i\in I}$ is compatible for all $F\in
    W(\bar \eta/\eta)$.
\item If $(L_i)_{i\in I}$ is a compatible system and \eqref{e.val} is a
    commutative square with $V$ strictly Henselian, then
    $(\tr(F,(L_i)_{\bar \eta}))_{i\in I}$ is compatible for all $F\in
    \Gal(\bar \eta/\eta)$.
\end{enumerate}
\end{cor}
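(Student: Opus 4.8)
The plan is to deduce both parts from Theorem~\ref{t.main}, through the notion of compatibility along the boundary of Definition~\ref{d.1}, after two preliminary reductions. First, since each $\iota_i\colon Q\to\Qlib$ is injective, the set of $a\in Q$ witnessing the compatibility of a prescribed tuple of traces is empty or a single point, so a filtered intersection of nonempty such sets is nonempty; hence one may assume $I$ finite whenever Theorem~\ref{t.main} is invoked. Second, if $Z\subseteq X$ is locally closed then $\lvert Z\rvert\subseteq\lvert X\rvert$ --- a point of $Z_k$, resp.\ $Z_K$, has residue field finite over $k$, resp.\ $K$, exactly when it is closed, whether regarded in $Z$ or in $X$, all four schemes being of finite type over a field --- and stalks and local Weil groups are unchanged, so restricting a compatible system to $Z$ again yields a compatible system.

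For the ``if'' direction of (1), I fix $x\in\lvert X\rvert$, a geometric point $\bar x$ above $x$, and $F\in W(\bar x/x)$, and realize this datum by an explicit square. If $x\in\lvert X_K\rvert$, take $\cO_L=\cO_{\kappa(x)}$, the (Henselian, discrete) valuation ring of the local field $\kappa(x)$, with $\eta=\Spec\kappa(x)\xrightarrow{\ \sim\ }x\hookrightarrow X$. If $x\in\lvert X_k\rvert$, take $\cO_L=\kappa(x)[[s]]$, which is a $k$-algebra because $\kappa(x)\supseteq k$ --- this is what makes the resulting square over $S$ commute, including when $\cO_K$ is of mixed characteristic --- with $\eta=\Spec\kappa(x)((s))\to\Spec\kappa(x)=x\hookrightarrow X$. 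In both cases $t$ has residue field finite over $k$ or $K$, hence is quasi-finite over $S$; the homomorphism $G_L\colonequals\Gal(\bar\eta/\eta)\to W(\bar x/x)$ is surjective, and $(L_i)_{\bar\eta}$ is the pullback of $(L_i)_{\bar x}$ along $G_L\to\Gal(\bar x/x)$. So the hypothesis forces $(\tr(F,(L_i)_{\bar x}))_i$ to be compatible; as $x,\bar x,F$ were arbitrary, $(L_i)$ is compatible.

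For the ``only if'' direction of (1) and for (2), assume $(L_i)$ compatible, fix a square~\eqref{e.val}, and let $\xi$ be the image of $\eta\to X$. Replacing $X$ in turn by the integral closed subscheme $\overline{\{\xi\}}$, by its dense open on which all $L_i$ are lisse, and --- via Theorem~\ref{t.main} --- by the dense, normal, integral stratum through $\xi$, I may assume $X$ is normal and integral with generic point $\xi$, the $L_i$ are lisse, and $X$ admits a normal compactification $\bar X$ over $S$ on which $(L_i)$ is compatible. The valuation ring $\cO_{L'}\colonequals\cO_L\cap\kappa(\xi)$ of $\kappa(\xi)$ lies over $S$, and the valuative criterion of properness for $\bar X\to S$ extends $\xi\hookrightarrow\bar X$ to $\Spec\cO_{L'}\to\bar X$, whose closed point maps to a point $z\in\bar X$. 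As $\cO_{\bar X,z}\to\cO_L$ is local and $\cO_L$ Henselian, it extends to $\cO^h_{\bar X,z}\to\cO_L$, giving $\eta\to X_{(z)}$ and $G_L\to\pi_1(X_{(z)},\bar a)$ with $\bar a$ the image of $\bar\eta$. Since the $L_i$ are lisse on $X$, the value $\tr(F,(L_i)_{\bar\eta})$ depends only on the image of $F$ in $\pi_1(X)$; it therefore suffices to realize that image by an element of $W(X_{(w)},\bar a)$ for some locally closed point $w$ of $\bar X$, and to invoke Definition~\ref{d.1} at $w$.

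In case (1), quasi-finiteness of $t$ forces $\kappa(z)$ to be finite over $k$ or $K$, so $z\in\lvert\bar X\rvert$ already; and using that a finite \'etale cover of the Henselian $\bar X_{(z)}$ has identified generic and special geometric fibres, one checks that $G_L\to\pi_1(X_{(z)},\bar a)\to\Gal(\bar z/z)$ is the decomposition map $G_L\to\Gal(\bar t/t)$ followed by the inclusion $\Gal(\bar t/t)\hookrightarrow\Gal(\bar z/z)$ of the finite extension $\kappa(z)\hookrightarrow\kappa(t)$, which carries $W(\bar t/t)$ into $W(\bar z/z)$; hence $W(\bar\eta/\eta)$ maps into $W(X_{(z)},\bar a)$ and we take $w=z$. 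In case (2) the point $z$ may fail to be locally closed, which is the crux: here $\cO_L$ is \emph{strictly} Henselian, hence has no nontrivial finite \'etale cover, so every finite \'etale cover of $\bar X_{(z)}$ pulls back trivially to $\Spec\cO_L$ and hence to $\eta$, whence $G_L\to\Gal(\bar z/z)$ is trivial, $\cO^h_{\bar X,z}\to\cO_L$ extends to $\cO^{sh}_{\bar X,z}\to\cO_L$ (so $\eta\to X_{(\bar z)}$), and the image of $F$ in $\pi_1(X)$ lies in the image of the inertia $I(z)\colonequals\pi_1(X_{(\bar z)},\bar a)$. Choosing a closed point $w$ of $\overline{\{z\}}$ --- which lies in $\lvert\bar X\rvert$ because $\overline{\{z\}}$, being proper over $S$, meets $\bar X_k$ --- the localization of $\cO^h_{\bar X,w}$ at the prime above $z$ is ind-\'etale over $\cO_{\bar X,z}$, yielding $\bar X_{(\bar z)}\to\bar X_{(w)}$ and, after base change to $X$, a morphism $X_{(\bar z)}\to X_{(w)}$; since $\pi_1(\bar X_{(\bar z)})=1$, the image of $I(z)$ in $\pi_1(X_{(w)},\bar a)$ lies in the inertia $I(w)\subseteq W(X_{(w)},\bar a)$, so the image of $F$ in $\pi_1(X)$ is realized by an element of $W(X_{(w)},\bar a)$ and Definition~\ref{d.1} at $w$ finishes. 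The step I expect to be the main obstacle is precisely this last argument for (2) --- handling a non-locally-closed limit point $z$ via a closed specialization $w$ and verifying, through strict Henselianness, that the monodromy at $z$ is absorbed into the inertia, hence the Weil group, at $w$ --- together with the attendant (routine but delicate) bookkeeping of decomposition maps, of stalks, and of the comparison $X_{(\bar z)}\to X_{(w)}$, which relies on $\cO_L$ being Henselian and the $L_i$ lisse after the reduction.
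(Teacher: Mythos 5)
Your proof is correct and follows essentially the same route as the paper's: reduce to $X$ integral with the $L_i$ lisse, invoke Theorem~\ref{t.main} to get a normal compactification on which the system is compatible, lift $V\to S$ by the valuative criterion of properness, and in the strictly Henselian case push the image of $\Gal(\bar\eta/\eta)$ from the strict Henselization at $z$ into the inertia, hence the Weil group, at a closed specialization $w$. The only differences are expository: you spell out the ``if'' direction with explicit test squares and route the valuative criterion through $\cO_L\cap\kappa(\xi)$, where the paper applies it to $V$ directly and uses the modification form (Corollary~\ref{c.key}) of Theorem~\ref{t.main}.
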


Note that here we do not assume $\cO_L$ to be a discrete valuation ring or
that $V\to S$ is local.

As an application, we deduce the equicharacteristic case of some classical
conjectures by Serre on $\ell$-independence (Conjectures C${}_4$, C${}_5$,
and C${}_8$ of \cite[Section 2.3]{Serre}, cf.\ \cite[Appendix, Problems 1
and 2]{ST}).

\begin{theorem}\label{t.ind} Let $\cO_L$ be a Henselian discrete valuation ring of characteristic
    $p>0$, of fraction field $L$ and residue field $\kappa$.  Let $X$ be a proper smooth scheme
    over $L$. Let $\bar L$ be a separable closure of
    $L$ and let $\bar \kappa$ be the residue field of $\bar L$. Let $X_{\bar L}=X\otimes_L \bar L$.
\begin{enumerate}
\item For each $m$ and each $F\in I(\bar L/L)\colonequals\Ker(\Gal(\bar
    L/L)\to \Gal(\bar \kappa/\kappa))$, $\tr(F,H^m(X_{\bar L},\Q_\ell))$
    is a rational integer independent of $\ell\neq p$.
\item (cf.\ \cite[Theorem 3.3]{Terasoma}) Assume that $\kappa$ is a finite
    field. Then for each $m$, each~$i$, and each $F\in W(\bar L/L)$ whose
    image in $W(\bar\kappa/\kappa)$ is the $n$-th power of the geometric
    Frobenius for $n\ge 0$, we have
\begin{itemize}
\item[(2a)] $\tr(F,\gr^M_i H^m(X_{\bar L},\Q_\ell))$ is a rational
    integer independent of $\ell\neq p$, where $M$ denotes the monodromy
    filtration; in particular,

\item[(2b)] $\tr(F, H^m(X_{\bar L},\Q_\ell))$ is a rational integer
    independent of $\ell\neq p$.
\end{itemize}
\end{enumerate}
\end{theorem}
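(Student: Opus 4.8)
The plan is to spread $X$ out to a proper smooth family over a smooth variety over a finite field, to recognize the higher direct images as a compatible system over $Q=\Q$ in the sense of Section~\ref{s.0}, and then to transport compatibility back to $X$ by means of the valuative criterion, Corollary~\ref{c.val}; the passage from ``independent of $\ell$'' to ``rational integer'' will be supplied separately, by Grothendieck's quasi-unipotence theorem in case~(1) and, in case~(2), by the weight-monodromy theorem, which holds because $L$ has equal characteristic~$p$, together with the integrality of Frobenius eigenvalues.

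\emph{Spreading out.} Since $X$ is of finite type over $L$, it descends to a proper smooth scheme $X_0$ over a subfield $L_0\subseteq L$ finitely generated over $\F_p$, with $X_0\otimes_{L_0}L\simeq X$. Writing $\F_q$ for the (perfect) field of constants of $L_0$, the extension $L_0/\F_q$ is separably generated, so after shrinking there is a smooth affine $\F_q$-scheme $U_0$ with function field $L_0$ and a proper smooth morphism $f_0\colon\cX_0\to U_0$ with generic fibre $X_0$. It is enough to prove $\ell$-independence, so I fix a two-element set $I$ of primes $\neq p$, take $Q=\Q$ with the unique embeddings $\iota_i\colon\Q\hookrightarrow\Qlib$, set $S=\Spec(\F_q[[t]])$, and regard $U_0$ as a scheme of finite type over $S$ through $\Spec(\F_q)\hookrightarrow S$, so that $\lvert U_0\rvert$ is the set of closed points of $U_0$. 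At such a point $x$ the stalk of $R^mf_{0*}\Q_{\ell_i}$ along a geometric point above $x$ is $H^m$ of a proper smooth variety over $\overline{\kappa(x)}$, and the characteristic polynomial of $\Fr_x$ on it lies in $\Q[T]$ and is independent of $\ell_i$ by Deligne's theorem; hence all the traces $\tr(\Fr_x^k,\cdot)$, $k\in\Z$, are rational and independent of $\ell_i$, and $(R^mf_{0*}\Q_{\ell_i})_{i\in I}$ is a compatible system on $U_0$ over $Q=\Q$.

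\emph{Transport.} Note that $\F_q\subseteq\cO_L$, because elements of $L$ algebraic over $\F_p$ are units; using this to factor the structure morphisms through $\Spec(\F_q)\hookrightarrow S$ compatibly, I feed the system into Corollary~\ref{c.val} along the composite $\Spec(L)\to\Spec(L_0)\to U_0$. For a separably closed overfield $\Omega\supseteq L$, the fibre of $\cX_0$ over the induced geometric point is $X\otimes_L\Omega$, so the stalk of $R^mf_{0*}\Q_{\ell_i}$ there is $H^m(X\otimes_L\Omega,\Q_{\ell_i})$ with its natural Galois action. For~(1) I apply Corollary~\ref{c.val}(2) to the square with $V=\Spec(\cO_L^{\mathrm{sh}})$ the strict Henselization of $\cO_L$ (a strictly Henselian valuation ring), $\eta=\Spec(L^{\mathrm{ur}})$ and $\bar\eta=\Spec(\bar L)$: since $\Gal(\bar\eta/\eta)=\Gal(\bar L/L^{\mathrm{ur}})=I(\bar L/L)$, this gives that $\tr(F,H^m(X_{\bar L},\Q_{\ell_i}))$ is, for each $F\in I(\bar L/L)$, independent of $i$ and rational. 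For~(2), where $\kappa$ is finite, I instead apply Corollary~\ref{c.val}(1) to the square with $V=\Spec(\cO_L)$, $\eta=\Spec(L)$ and $\bar\eta=\Spec(\bar L)$: the closed point of $V$ has finite residue field, hence is quasi-finite over $S$, while $W(\bar\eta/\eta)=W(\bar L/L)$, so $\tr(F,H^m(X_{\bar L},\Q_{\ell_i}))$ is independent of $i$ and rational for every $F\in W(\bar L/L)$; applying this to the powers of $F$, the characteristic polynomial of $F$ on $H^m(X_{\bar L},\Q_{\ell_i})$ lies in $\Q[T]$ and is independent of $i$.

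\emph{From $\ell$-independence to integrality.} In case~(1), $I(\bar L/L)$ acts on $H^m(X_{\bar L},\Q_\ell)$ quasi-unipotently (Grothendieck; for general residue field, via de Jong's alteration theorem), so any $F\in I(\bar L/L)$ has root-of-unity eigenvalues there and $\tr(F,H^m(X_{\bar L},\Q_\ell))$ is an algebraic integer; being rational, it is a rational integer, which proves~(1). In case~(2), the weight-monodromy theorem for the equal-characteristic field $L$ (Deligne) identifies the monodromy filtration $M$ on $H^m(X_{\bar L},\Q_\ell)$ with the weight filtration for a lift $\Phi$ of geometric Frobenius; this filtration is canonical, hence $W(\bar L/L)$-stable, and the projector onto $\gr^M_w$ is a polynomial in $\Phi$ with coefficients depending only on the ($i$-independent) characteristic polynomial of $\Phi$, so $\tr(F,\gr^M_w H^m(X_{\bar L},\Q_{\ell_i}))$ is a suitable combination of the traces $\tr(F\Phi^j,H^m(X_{\bar L},\Q_{\ell_i}))$ and is therefore independent of $i$ and rational for every $F\in W(\bar L/L)$. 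When $F$ maps to the $n$-th power of geometric Frobenius with $n\ge 0$, its eigenvalues on $\gr^M_w H^m(X_{\bar L},\Q_\ell)$ are roots of unity if $n=0$ and algebraic integers in general (integrality of Frobenius eigenvalues; cf.\ \cite{Terasoma}), while passing to $\gr^M_w$ of an integral lattice shows $\tr(F,\gr^M_w H^m(X_{\bar L},\Q_\ell))\in\Z_\ell$ for every $\ell\neq p$; a rational number that is an algebraic integer is a rational integer, which gives~(2a), and summing over $w$ gives~(2b). The main obstacle, apart from the bookkeeping in the transport step, is~(2a): one must know that the monodromy filtration coincides with the weight filtration over an equal-characteristic local field and that the relevant Frobenius eigenvalues are algebraic integers, whereas the input over finite fields and Corollary~\ref{c.val} are used as black boxes.
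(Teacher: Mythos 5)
Your proposal is correct and follows essentially the same route as the paper: spread out to a proper smooth family over a base of finite type over $\F_p$, observe that the $R^mf_*\Q_\ell$ form a compatible system over $Q=\Q$ (you via Deligne's purity on fibres, the paper via the trace formula plus weights), transport to $\Gal(\bar L/L)$ by Corollary \ref{c.val} (part (2) with the strict Henselization for (1), part (1) for (2)), and pass from rationality to integrality by Grothendieck's geometric local monodromy theorem in case (1) and by Ochiai/Terasoma-type integrality together with the equal-characteristic weight--monodromy theorem (Terasoma, Ito) in case (2). Your extraction of $\tr(F,\gr^M_w)$ via spectral projectors that are polynomials in a Frobenius lift is a slightly different (and equally valid) phrasing of the paper's remark that the characteristic polynomial on $\gr^M_i$ can be read off from that on $H^m$.
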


Part (2) was claimed in \cite[Theorem 6.1]{CL}, but the proof given there is
incomplete\footnote{The authors of \cite{CL} have been made aware of this
and have submitted a corrigendum.}. A weaker form of (2) was proved by
Terasoma \cite[Theorem 3.3]{Terasoma}.

\begin{remark}\label{r.mw}
Theorem \ref{t.ind} (1) is the equicharacteristic $p>0$ case of Serre's
Conjecture C${}_4$. Theorem \ref{t.ind} (2a) implies the equicharacteristic
case of Conjecture C${}_5$ (Remark \ref{r.ind} (2)), while (2b) implies the
equicharacteristic case of Conjecture C${}_8$. Parts (1) and (2b) of Theorem
\ref{t.ind} hold more generally over a Henselian valuation field of
characteristic $p>0$ without assuming that the valuation is discrete (Remark
\ref{r.ind} (3)).

The alternating sum $\sum_m (-1)^m \tr(F,H^m(X_{\bar L},\Q_\ell))$ of the
traces in (1) and (2b) was known to be a rational integer independent of
$\ell\neq p$ more generally for $X$ separated of finite type over $L$
without the equicharacteristic assumption. See Vidal \cite[Proposition
4.2]{Vidal1} (combined with Laumon \cite[Th\'eor\`eme 1.1]{Laumon}), Ochiai
\cite{Ochiai}, \cite{Zint} and \cite{Zind} (Theorems \ref{t.six} and
\ref{t.intf} below). Our valuative criterion allows to further extend the
results on the alternating sum to Henselian valuation fields \cite{LZ}.
\end{remark}

In the case where $X$ is defined over a curve over a finite field, Theorem
\ref{t.ind} (2b) follows from Deligne's theorem for curves mentioned above
and results of Weil II \cite{WeilII}. In the general case, after spreading
out, the base becomes a variety over a finite field and we apply Corollary
\ref{c.val}.

In Section~\ref{s.1}, we give the proofs of Theorems \ref{t.main} and
\ref{t.ind}. The proof of Theorem \ref{t.main} relies on the preservation of
compatible systems under direct images \cite[Proposition 4.15]{Zind}. Over a
finite field the latter is a theorem of Gabber \cite[Theorem~2]{Gabber}.

In Section~\ref{s.3}, we study integrality along the boundary and prove an
analogue of Theorem \ref{t.main}. This generalizes a theorem of Deligne on
non-Archimedean absolute values of liftings of local Frobenius for curves
over finite fields \cite[Th\'eor\`eme 1.10.3]{WeilII}.

Our original motivation for studying compatibility along the boundary is to
understand the relationship between compatible systems of $\Qlb$-sheaves and
systems of $\Flb$-sheaves with compatible wild ramification. The latter and
variants were studied in recent work of Saito, Yatagawa (\cite{SY},
\cite{Yatagawa}) and Guo \cite{Guo}, generalizing earlier work of Deligne
\cite{IllBrauer} and Vidal \cites{Vidal1, Vidal2}. In Section~\ref{s.2}, we
deduce from our valuative criterion for compatible systems that compatible
systems have compatible ramification, and consequently, their reductions
have compatible wild ramification. These notions are defined using Vidal's
ramified part of the fundamental group, which involve images of local
inertia groups at geometric points of compactifications $\bar X$ of $X$. We
define the decomposed part of the fundamental group by taking instead images
of the local decomposition groups at $x\in \lvert\bar X\rvert$. We show, as
another application of Theorem \ref{t.main}, that the union of the images of
the local decomposition (or Weil) groups for $x\in \lvert X\rvert$ is dense
in the decomposed part.

\paragraph*{Acknowledgment} We thank H\'el\`ene Esnault, Yongquan Hu, and Luc Illusie for many useful
discussions. We thank the referee for a careful reading of the manuscript
and for many helpful suggestions.

\section{Compatible systems along the boundary}\label{s.1}

The strategy of the proof of Theorem \ref{t.main} is to reduce to the case
of lisse sheaves tamely ramified along a normal crossing divisor with
unipotent local monodromy. For this we need to work with finite group
actions. We now review the notion of compatible systems on Deligne-Mumford
stacks \cite[Section~5]{Zind} and finite quotient stacks in particular. In
this paper, Deligne-Mumford stacks are assumed to be quasi-separated with
separated diagonal.

Let $\bar k$ be a separable closure of $k$. Each $F\in W(\bar k/k)$ is the
$n$-th power of the geometric Frobenius $\Fr\colon a\mapsto a^{1/q}$ for
some $n\in \Z$. We call $n$ the \emph{degree} of $F$. For an integer $N$, we
let $W^{\ge N}(\bar k/k)$ denote the subset $\{\Fr^n\mid n\ge N\}$.

\begin{notation}\label{n.Y}
For any connected Deligne-Mumford stack $Y$ over $S$ and any geometric point
$\bar a\to Y$, we define the Weil group $W(Y,\bar a)$ to be the inverse
image of the Weil group $W(\bar k/k)$ by the homomorphism
\[r\colon \pi_1(Y,\bar a)\to \pi_1(S,\bar a)\simeq \Gal(\bar k/k).\]
We define the \emph{degree} of $F\in W(Y,\bar a)$ to be the degree of
$r(F)$. We let $W^{\ge N}(Y,\bar a)$ denote the subset $r^{-1}(W^{\ge
N}(\bar k/k))$ of elements of degree $\ge N$.
\end{notation}

Let $X$ be a Deligne-Mumford stack.  For a point $\xi$ of $X$, we let
$X_\xi$ denote the residual gerbe, which is necessarily a quotient stack
$[x/G]$ by a finite group $G$ of the spectrum of a field $x$ (cf.\
\cite[page 13]{IZ}). For a geometric point $\bar x$ above $x$, we have
\[\pi_1([x/G],\bar x)\simeq \Gal(\bar x/y)\times_{\Gal(x/y)} G,\]
where $y=x/G$.

Assume $X$ of finite type over $S$. We let $\lvert X\rvert$ denote the set
of locally closed points of $X$. For $\xi\in\lvert X\rvert$, $x$ is
quasi-finite over $S$, spectrum of a finite field extension of $k$ or $K$.
The Weil group $W([x/G],\bar x)\subseteq \pi_1([x/G],\bar x)$ is the inverse
image of the Weil group $W(\bar x/y)\subseteq \Gal(\bar x/y)$ by the
homomorphism $\pi_1([x/G],\bar x)\to \Gal(\bar x/y)$, which is surjective of
kernel the inertia group.

\begin{defn}\label{d.stack}
We say that $(L_i) \in \prod_{i\in I}K(X,\Qlib)$ is \emph{compatible} if it
satisfies the following equivalent conditions.
\begin{enumerate}
\item For every $\xi\in \lvert X\rvert$, every geometric point $\bar x$
    above $\xi$, and every $F\in W(X_\xi,\bar x)$, $(\tr(F,(L_i)_{\bar
    x}))_{i\in I}$ is compatible.

\item For every quasi-finite morphism $f\colon x\to X$ where $x$ is the
    spectrum of a field, $(f^*L_i)_{i\in I}$ is a compatible system on $x$
    (Section \ref{s.0}).

\item For every smooth morphism $f\colon Y\to X$ of finite type with $Y$ a
    scheme, $(f^*L_i)_{i\in I}$ is a compatible system on $Y$ (Section
    \ref{s.0}).
\end{enumerate}
\end{defn}

The implications (1) $\Rightarrow$ (2) $\Rightarrow$ (3) are trivial. (3)
$\Rightarrow$ (2) follows from the existence of smooth neighborhoods
\cite[Th\'eo\`eme 6.3]{LMB}. (2) $\Rightarrow$ (1) follows from
\cite[Proposition 5.6]{Zind} applied to the quotient stack $X_\xi$, which is
based on a method of Deligne and Lusztig \cite[proof of Proposition
3.3]{DL}.

In the case where $X=[Y/G]$ is a quotient stack of a scheme $Y$ by a finite
group, the residual gerbe at the image of $y\in Y$ is $[y/D(y)]$, where
$D(y)<G$ is the decomposition group.

The main result of \cite{Zind} can be stated as follows.

\begin{theorem}\label{t.six}
Compatible systems on Deligne-Mumford stacks of finite type over~$S$ are
stable under Grothendieck's six operations and duality.
\end{theorem}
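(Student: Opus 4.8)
The plan is to reduce the six-operations stability to two inputs already available in the literature: the stability of compatible systems under the six operations for \emph{schemes} over $S$ (proved in \cite{Zint} and \cite{Zind}), and the descent description of compatibility on Deligne--Mumford stacks provided by Definition \ref{d.stack}(3), which says that $(L_i)$ is compatible on $X$ if and only if $(f^*L_i)$ is compatible on $Y$ for every smooth morphism $f\colon Y\to X$ of finite type with $Y$ a scheme. Since every Deligne--Mumford stack of finite type over $S$ admits a smooth surjective morphism from a scheme, and since both the six operations and the notion of compatibility are compatible with smooth base change, we should be able to check everything after such a smooth cover; the content is to match up each of the six operations (plus duality) with its counterpart on a presentation.

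First I would treat the operations that commute with smooth pullback in a transparent way. For $\otimes$, $\mathcal{H}om$, and the duality functor $D$, one checks directly on a smooth presentation $Y\to X$: these operations commute with the smooth pullback $f^*$ (up to the relevant shift/twist for $D$ using a fixed smooth presentation, or working with the absolute dualizing complex), so compatibility of the output follows from Definition \ref{d.stack}(3) and the scheme case. Likewise $f^*$ and $f^!$ for a morphism $g\colon X\to X'$ of DM stacks: after choosing compatible smooth presentations $Y\to X$, $Y'\to X'$ fitting into a commutative square, base change reduces $g^*$ and $g^!$ on stacks to the corresponding operations on schemes, where compatibility is known by \cite{Zind}. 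The key point throughout is that ``compatible'' is by definition a condition checkable smooth-locally, so these reductions are essentially formal once one records that the six operations on DM stacks are defined so as to be compatible with smooth (hence flat) base change.

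The genuinely substantive case is the pushforward $g_*=Rg_*$ (equivalently, by duality, $g_!$) along a morphism $g\colon X\to X'$ of DM stacks of finite type over $S$. Here smooth pullback does \emph{not} commute with $g_*$ in general, so one cannot simply pass to a presentation of $X$. The strategy is the standard dévissage for stacks: using that $g$ factors, after suitable stratification and presentation, through representable morphisms and through the structure morphism of a classifying stack $BG$ for a finite group $G$ (or, more concretely, that $g$ can be analyzed via a smooth presentation $Y'\to X'$ with $X\times_{X'}Y'$ a scheme, or a quotient $[Y/G]$). For representable $g$ one pulls back along a smooth presentation of the target and invokes the scheme case directly. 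For the remaining piece --- pushforward along $[Y/G]\to Y/G$ or along $BG\to \mathrm{pt}$ --- one uses that taking $G$-invariants on a Grothendieck group with its trace function is governed by the averaging operator $\frac{1}{|G|}\sum_{h\in G}$, which visibly preserves the condition of having traces in $\iota_i(Q)$ compatibly (this is exactly the mechanism behind the equivalence of the three conditions in Definition \ref{d.stack}, via \cite[Proposition 5.6]{Zind} and the Deligne--Lusztig averaging argument \cite[proof of Proposition 3.3]{DL}).

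The main obstacle, as indicated, is the pushforward step: organizing the dévissage of an arbitrary morphism of DM stacks into representable pieces and classifying-stack pieces, and checking that the compatibility condition --- a pointwise statement about local Weil-group traces at points of $|X'_{\xi}|$ --- is preserved. Concretely, for $\xi\in |X'|$ with residual gerbe $[x/G]$ and $\bar x$ above it, one must compute $\tr(F,(g_*L_i)_{\bar x})$ for $F\in W(X'_\xi,\bar x)$ via proper/smooth base change to the fiber $X_\xi := X\times_{X'} X'_\xi$, reducing to the known scheme statement for the base-changed morphism together with the finite-group averaging; keeping track of the Weil-group action and the base-change isomorphisms across the stacky fibers is where the care is needed. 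Once $g_*$ is handled, $g_!$ follows by duality, and the proof is complete.
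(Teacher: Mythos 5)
Your overall architecture coincides with the paper's: the paper offers no independent argument for Theorem \ref{t.six}, but simply cites \cite[Proposition 5.8]{Zind} (noting that the same proof works over $S$ once \cite[Th\'eor\`eme 1.16]{Zind} is replaced by \cite[Proposition 4.15]{Zind}), and the proof being cited is exactly the reduction you describe: smooth descent via Definition \ref{d.stack} (3), reduction of the source to quotient stacks $[Y/G]$, the Deligne--Lusztig averaging over residual gerbes \cite[Proposition 5.6]{Zind}, and the scheme case over $S$ --- \cite[Proposition 4.15]{Zind}, Gabber's theorem \cite{Gabber} over a finite field --- as the one substantive input. So in spirit your proposal is the intended proof, with the deep content correctly quarantined in the scheme-level theorem.

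One step as written would fail, however. You propose to compute $\tr(F,(Rg_*L_i)_{\bar x})$ by base change to the fiber $X\times_{X'}X'_{\xi}$ over the residual gerbe. For non-proper $g$ the stalk of $Rg_*L_i$ at a geometric point $\bar x$ of $X'$ is the cohomology of the Henselian tube $X\times_{X'}\bar X'_{(\bar x)}$, not of the fiber: already for an open immersion $j$ the fiber over a boundary point is empty, while $(Rj_*L)_{\bar x}$ is the cohomology of the punctured tube. So proper base change to the residual gerbe gives the wrong answer precisely in the case that carries all the difficulty. The standard repair is to factor $g=\bar g\circ j$ (after stratification and compactification) with $j$ an open immersion and $\bar g$ proper: proper pushforward is then handled by genuine proper base change plus the finite-group averaging you describe, while $Rj_*$ for an open immersion is exactly where \cite[Proposition 4.15]{Zind} enters --- and indeed the present paper only ever uses stability under $Rj_*$. (Relatedly, deducing $g_!$ from $g_*$ ``by duality'' is backwards in the usual d\'evissage: $g_!$ is the formal one, and $g_*$ and duality are the operations that require the weight-theoretic input.) With these corrections your reduction closes up and agrees with the cited argument.
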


This is stated for Deligne-Mumford stacks of finite type over $k$ or $K$ in
\cite[Proposition 5.8]{Zind}, but the same proof applies over $S$ with
\cite[Th\'eor\`eme 1.16]{Zind} replaced by the more general
\cite[Proposition 4.15]{Zind}. The case of schemes of finite type over $k$
is a theorem of Gabber \cite[Theorem~2]{Gabber}.

We will only need the stability under $Rj_*$ for an open immersion $j$.

\begin{remark}\label{r.rat}
Let $x$ be quasi-finite over $S$ and let $(L_i)\in \prod_{i\in I}
K(x,\Qlib)$. If there exists an integer $N$ such that $(\tr(F,(L_i)_{\bar
x}))_{i\in I}$ is compatible for all $F\in W^{\ge N}(x,\bar x)$, then the
same holds for all $F\in W(x,\bar x)$ by \cite[Proposition 1.15]{Zind}
(consequence of Grothendieck's arithmetic local monodromy theorem
\cite[Appendix]{ST} and a rationality lemma \cite[Lemma 8.1]{IllMisc}).
\end{remark}

In the regular case, compatibility of systems of unramified lisse sheaves
extends to the boundary by the following variant of \cite[Proposition
3.10]{Zind}.

\begin{prop}\label{p.reg}
Let $X$ be a regular Deligne-Mumford stack of finite type over~$S$ and let
$(L_i)\in \prod_{i\in I}K_\lisse(X,\Qlib)$. Assume that $(L_i|_U)_{i\in I}$
is compatible for some dense open substack $U\subseteq X$. Then $(L_i)_{i\in
I}$ is compatible.
\end{prop}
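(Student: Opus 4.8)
The plan is to reduce the statement to a check at each locally closed point $\xi\in\lvert X\rvert$, using condition (1) of Definition \ref{d.stack}, and then to relate the Weil group at $\xi$ (viewed in $X$) to the Weil group at the corresponding point of the dense open $U$ where compatibility is already known. Concretely, fix $\xi\in\lvert X\rvert$, a geometric point $\bar x$ above it, and $F\in W(X_\xi,\bar x)$; we must show $(\tr(F,(L_i)_{\bar x}))_{i\in I}$ is compatible. Since the $L_i$ are lisse on all of $X$, the stalk $(L_i)_{\bar x}$ together with its $\pi_1$-action is computed by restricting the lisse sheaf along the $2$-commutative square relating the residual gerbe $X_\xi$ and its Henselization/strict localization in $X$; the point is that $\pi_1(X_\xi,\bar x)$ maps to $\pi_1(X,\bar x)$, and the action of $F$ on the stalk factors through this map. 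So it suffices to realize $F$ (or a sufficient supply of elements, see below) as coming from Weil-group elements attached to points of $U$.

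The key geometric input is that $U$ is dense in the regular stack $X$: after passing to an étale chart $Y\to X$ by a scheme (which exists since $X$ is Deligne–Mumford), $Y$ is regular and $U_Y=Y\times_X U$ is dense open in $Y$. Pick a point $y\in Y$ above $\xi$. Because $Y$ is regular, by prime avoidance / the fact that a regular local ring is a domain, one can find a regular one-dimensional local subscheme — equivalently, after localizing and cutting by a regular system of parameters, a trait $T=\Spec(\cO)$ with $\cO$ a discrete valuation ring, mapping to $Y$, whose closed point lands on $y$ and whose generic point lands in $U_Y$. (If $\xi$ is already a point of $U$ there is nothing to do; the content is when $\xi$ lies in the boundary $X\setminus U$.) Pulling back the compatible system along $T\to Y\to X$ gives a compatible system on a trait with generic point in $U$, and the decomposition group at the closed point of $T$ surjects — up to an open subgroup coming from inertia — onto a subgroup of $\pi_1(X_\xi,\bar x)$ containing the Weil elements we care about. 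This is exactly the mechanism already exploited in \cite[Proposition 3.10]{Zind}, of which this proposition is a mild variant.

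The main obstacle, and where care is needed, is that the map from the decomposition group on the trait to $\pi_1(X_\xi,\bar x)$ need not be surjective: its image may be only an open subgroup, so one obtains compatibility of $\tr(F,(L_i)_{\bar x})$ only for $F$ in that open subgroup, in particular for all $F$ of sufficiently large degree, i.e.\ for $F\in W^{\ge N}(X_\xi,\bar x)$ for some $N$. To upgrade this to all of $W(X_\xi,\bar x)$ I would invoke Remark \ref{r.rat}: applied to the quasi-finite scheme $x$ underlying the gerbe $X_\xi$ (together with the finite-group action repackaged via Definition \ref{d.stack}(2) and \cite[Proposition 5.6]{Zind}, as in the proof of (2)$\Rightarrow$(1) there), compatibility on $W^{\ge N}$ forces compatibility on the whole Weil group, by Grothendieck's arithmetic local monodromy theorem and the rationality lemma. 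Assembling: for each $\xi$ we have verified Definition \ref{d.stack}(1), hence $(L_i)_{i\in I}$ is compatible on $X$. The only genuinely new point relative to \cite[Proposition 3.10]{Zind} is allowing $X$ to be a regular Deligne–Mumford stack rather than a regular scheme, which is handled transparently by passing to an étale scheme chart and using that all the relevant notions (lisse sheaves, Weil groups, compatibility) are local for the étale topology by Definition \ref{d.stack}(3).
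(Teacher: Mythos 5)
Your reduction to checking condition (1) of Definition \ref{d.stack} at each $\xi\in\lvert X\rvert$, the passage to an \'etale scheme chart, and the use of Remark \ref{r.rat} to go from an open subgroup (or from elements of large degree) to the whole Weil group are all fine. The gap is in the transfer step. The trait $T$ you cut out of a regular chart has closed point above $\xi$, but its generic point $\eta_T$ is a generization of $\xi$ inside $U$, hence a point of positive codimension whose residue field in general has positive transcendence degree over $k$ or $K$; it is \emph{not} a locally closed point of $U$. The hypothesis that $(L_i|_U)_{i\in I}$ is compatible therefore says nothing about the traces of elements of $W(\bar\eta_T/\eta_T)$ on $(L_i)_{\bar\eta_T}$, so the assertion that "pulling back gives a compatible system on a trait" is unjustified --- it is exactly a statement of the type of Corollary \ref{c.val}, which in this paper is \emph{downstream} of the results being proved. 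Nor can the step be repaired by a Chebotarev/density argument: the subset $\{(\iota_i(a))_{i\in I}:a\in Q\}$ of $\prod_{i\in I}\Qlib$ is not closed (for $Q=\Q$ and two distinct primes it is dense in $\Q_{\ell_1}\times\Q_{\ell_2}$), so compatibility does not propagate to limits of Frobenius elements. This non-closedness across different $\ell_i$ is precisely why the proposition has content.

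The paper's proof avoids this by going through $Rj_*$. By induction one reduces to $D=X-U$ regular of pure codimension $d\ge1$. Theorem \ref{t.six} (preservation of compatible systems under $Rj_*$; over a finite field this is Gabber's theorem) shows that $(Rj_*(L_i|_U))_{i\in I}$ is a compatible \emph{constructible} system on all of $X$, hence has compatible traces at locally closed points of $D$. The projection formula together with absolute purity gives $\tr(F,(Rj_*(L_i|_U))_{\bar x})=(1-q^{nd})\,\tr(F,(L_i)_{\bar x})$ for $F$ of degree $n$, so one divides by $1-q^{nd}$ when $n\neq0$ and invokes Remark \ref{r.rat} for the remaining degree. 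Note that this is also where regularity genuinely enters (purity for the pair $(X,D)$); in your argument regularity is used only to cut curves, which is a symptom that the essential input is missing.
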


\begin{proof}
The proof is very similar to that of \cite[Proposition 3.10]{Zind}. A
related argument will be used in the proof of Proposition \ref{p.NCD} below.
By induction, we may assume that $D=X-U$ is regular and purely of
codimension $d\ge 1$. Let $j\colon U\to X$ be the open immersion. By Theorem
\ref{t.six}, $(Rj_*(L_i|_U))_{i\in I}$ is compatible. By projection formula,
\[L_i \otimes_{\Qli} Rj_* \Qli\simeq Rj_*(L_i|_U).\]

Gabber's absolute purity theorem (\cite[Theorem 2.1.1]{Gabberpure},
\cite[Th\'eo\`eme 3.1.1]{Riou}) extends to Deligne-Mumford stacks: the
refined cycle class $\cl_f\in H^{2d}_D(X,\Q_\ell(d))$ induces an isomorphism
$\Q_\ell\simto Rf^!\Q_\ell(d)[2d]$, where $f\colon D\to X$ denotes the
closed immersion. Indeed, the definition of $\cl_f$ \cite[Definition
1.1.2]{Gabberpure} holds without change (with Chern classes defined by
Grothendieck \cite[Section 1]{Grothendieck}) and the fact that it induces an
isomorphism reduces to the case of schemes. It follows that we have
\[R^m j_* \Q_\ell\simeq \begin{cases}
  \Q_\ell&m=0,\\
  (\Q_\ell)_D(-d)&m=2d-1,\\
  0&\text{otherwise.}
\end{cases}
\]
Alternatively we can reduce Proposition \ref{p.reg} to the case of schemes
using Definition \ref{d.stack} (3).

For $x\to D$ quasi-finite and $F\in W(\bar x/x)$ of degree $n$,
$\tr(F,(Rj_*\Q_\ell)_{\bar x})=1-q^{nd}$. Thus, for $n\neq 0$,
$\tr(F,(L_i)_{\bar x})$ can be recovered from $\tr(F,(Rj_*(L_i|_U))_{\bar
x})$. Therefore, $(L_i)_{i\in I}$ is compatible by Remark \ref{r.rat}.
\end{proof}

Next we define compatibility on the boundary in the equivariant setting. Let
$\bar X$ be a scheme equipped with the action of a finite group $G$. For
$x\in \bar X$, the decomposition group $D(x)$ acts on $\bar X_{(x)}$. For
any geometric point $\bar x$ above $x$, we have $\pi_1([x/D(x)],\bar
x)\simeq \pi_1([\bar X_{(x)}/D(x)],\bar x)$. For $\bar X$ normal,
$X\subseteq \bar X$ a $G$-stable dense open subscheme, and $\bar a\to
X_{(x)}$ a geometric point, the homomorphism
\begin{equation}\label{e.surj}
\pi_1([X_{(x)}/D(x)],\bar a)\to \pi_1([\bar X_{(x)}/D(x)],\bar a)\simeq \pi_1([x/D(x)],\bar
x)
\end{equation}
is surjective.

\begin{defn}\label{d.compb}
Let $\bar X$ be a normal scheme of finite type over $S$ equipped with an
action of $G$ by $S$-automorphisms and let $X$ be a $G$-stable dense open
subscheme. We say that $(L_i)\in \prod_{i\in I}K_\lisse([X/G],\Qlib)$ is
\emph{compatible on $[\bar X/G]$} if for every $x\in \lvert \bar X\rvert$,
every geometric point $\bar a\to X_{(x)}$, and every $F\in
W([X_{(x)}/D(x)],\bar a)$, $(\tr(F,(L_i)_{\bar a}))_{i\in I}$ is compatible.
\end{defn}

\begin{remark}\leavevmode\label{r.triv}
\begin{enumerate}
\item $(L_i)\in \prod_{i\in I}K_\lisse([X/G],\Qlib)$ is compatible on
    $[X/G]$ in the sense of Definition \ref{d.compb} if and only if it is
    compatible in the sense of Definition \ref{d.stack}. This follows from
    the isomorphism in \eqref{e.surj}.
\item Let $U\subseteq X\subseteq \bar X$ be a $G$-stable dense open
    subscheme. Then $(L_i)\in \prod_{i\in I}K_\lisse([X/G],\Qlib)$ is
    compatible on $[\bar X/G]$ if and only if $(L_i|_{[U/G]})_{i\in I}$ is
    compatible on $[\bar X/G]$. This follows from the fact for $x\in \bar
    X$, the homomorphism $\pi_1([U_{(x)}/D(x)],\bar a)\to
    \pi_1([X_{(x)}/D(x)],\bar a)$ is surjective.
\item Assume that $G$ acts freely on $X$. Let $Y=X/G$ and $\bar Y=\bar
    X/G$ be the quotient spaces. Then, for all $x\in \bar X$, if $y\in
    \bar Y$ denotes its image, then $[X_{(x)}/D(x)]\simeq Y_{(y)}$. Thus,
    in this case, $(L_i)_{i\in I}$ on $X/G$ is compatible on $[\bar X/G]$
    if and only if it is compatible on $\bar X/G$.
\end{enumerate}
\end{remark}

\begin{remark}\label{r.closed}
Let $x\in \lvert \bar X\rvert$ be a point that is not closed. The closure
$Y=\overline{\{x\}}\subseteq \bar X$ admits a Zariski open cover by schemes
finite over~$S$. Thus $Y=\bigcup_y Y_{(y)}$, $y$ running through closed
points of $Y$. We have $x\to Y_{(y)}\to \bar X_{(y)}$, which induces a
morphism $X_{(x)} \to X_{(y)}$. If $\bar X$ is separated, then $Y=Y_{(y)}$
and $D(x)<D(y)$. Thus in Definition \ref{d.compb}, if $\bar X$ is separated
or $G=\{1\}$, then we may restrict to closed points of $\bar X$.
\end{remark}

\begin{remark}
Given a point $\xi$ of a Deligne-Mumford stack $Y$, one can define the
Henselization of $Y$ at $\xi$ to be the limit of Deligne-Mumford stacks $V$
for decompositions of the residual gerbe $Y_\xi\to Y$ into $Y_\xi\to
V\xrightarrow{\phi} Y$ with $\phi$ representable and \'etale. Using
\cite[Lemma 3.5]{IZ}, one can show that the Henselization of $[\bar X/G]$ at
the image of $x\in \bar X$ is $[\bar X_{(x)}/D(x)]$. Thus Definition
\ref{d.compb} depends only on the quotient stacks and can be extended to
Deligne-Mumford stacks.
\end{remark}

Let $\bar X$ be a regular Deligne-Mumford stack and let $D\subseteq \bar X$
be a normal crossing divisor. We say that a lisse $\Qlb$-sheaf $\cF$ on
$X=\bar X-D$ is \emph{tamely ramified on $\bar X$} if for every geometric
point $\bar x$ above a generic point of $D$ and every geometric point $\bar
a$ of $X_{(\bar x)}\colonequals \bar X_{(\bar x)}\times_{\bar X}X$, the wild
inertia group of $X_{(\bar x)}$ acts trivially on $\cF_{\bar a}$. Here $\bar
X_{(\bar x)}$ denotes the strict Henselization. We say that $L\in
K_\lisse(X,\Qlb)$ is \emph{tamely ramified on $\bar X$} if $L=[\cF]-[\cG]$
with $\cF$ and $\cG$ lisse and tamely ramified on $\bar X$.

\begin{prop}\label{p.NCD}
Let $\bar X$ be a regular scheme of finite type over~$S$ equipped with an
action of a finite group $G$ by $S$-automorphisms. Let $D\subseteq \bar X$
be a normal crossing divisor such that $X=\bar X-D$ is $G$-stable. Let
$(L_i)\in \prod_{i\in I}K_\lisse([X/G],\Qlib)$ be a compatible system.
Assume that one of the following conditions holds:
\begin{enumerate}
\item For each $i$ there exist lisse $\Zlib$-sheaves $\cF_i$ and $\cG_i$
    such that $L_i|_X=([\cF_i] -[\cG_i])\otimes_{\Zlib} \Qlib$, and
    $\cF_i\otimes_{\Zlib} \Zlib/\ell_i^{c}\Zlib$ and $\cG_i\otimes_{\Zlib}
    \Zlib/\ell_i^{c}\Zlib$ are constant for some rational number
    $c>\frac{1}{\ell_i-1}$. Here $\Zlib$ denotes the ring of integers of
    $\Qlib$.
\item $G=\{1\}$ and each $L_i$ is tamely ramified on $\bar X$.
\end{enumerate}
Then $(L_i)_{i\in I}$ is compatible on $[\bar X/G]$.
\end{prop}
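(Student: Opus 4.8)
The plan is to reduce the statement to a situation where the monodromy can be made explicit and then apply the results already available over the boundary, namely Proposition \ref{p.reg} together with the stability of compatible systems under $Rj_*$ from Theorem \ref{t.six}. First I would set up the local picture: it suffices to check compatibility at each $x\in\lvert\bar X\rvert$ after passing to the Henselization $\bar X_{(x)}$, and by Remark \ref{r.triv}(2) we are free to shrink $X$ to any $G$-stable dense open subscheme. Since $D$ is a normal crossing divisor, locally around a point of $\bar X$ it is a union of coordinate hyperplanes, and the tame (or pro-$\ell_i$-with-small-jump) fundamental group of the complement is a quotient of $\widehat{\Z}^{(p')}{}^{\,r}$, where $r$ is the number of branches through $x$. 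The key point under either hypothesis (1) or (2) is that the local monodromy of each $\cF_i$, $\cG_i$ is \emph{unipotent}: in case (2) tameness plus unipotence is what we will have arranged (via the standard trick of replacing $\bar X$ by a Kummer covering, which introduces a finite group action --- this is precisely why we allow $G\neq\{1\}$ and why case (1) is phrased with a congruence condition à la Vidal), and in case (1) the congruence $\cF_i\otimes\Zlib/\ell_i^c$ constant with $c>\frac1{\ell_i-1}$ forces the $\ell_i$-part of the monodromy to be unipotent as well, by the usual argument that a pro-$\ell$ subgroup of $1+\ell^c M_n(\Zlib)$ with $c>\frac1{\ell-1}$ lies in the image of $\log$ and hence consists of unipotent elements once the reduction is trivial.

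Once the local monodromy operators $T_1,\dots,T_r$ acting on each stalk are unipotent, I would pass to the associated graded for the (multi-)monodromy filtration, or more simply use the explicit shape of $Rj_*$ in the tame-unipotent case: on the Henselization $\bar X_{(x)}$, with $U$ the complement of $D$ there, one has a Koszul-type resolution computing $Rj_*\Qli$ in terms of the $\log T_a$, and more relevantly $Rj_*(L_i|_U)$ has stalks that, as virtual Weil-group representations over the residue point, are built functorially out of the stalk $(L_i)_{\bar a}$ and the nilpotent operators $N_a=\log T_a$ via exterior powers of the $N_a$'s --- an expression with \emph{integer} coefficients (signs from the Koszul complex) that is the same for all $i$. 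Combined with the fact that, by Theorem \ref{t.six}, $(Rj_*(L_i|_U))_{i\in I}$ is a compatible system on $[\bar X/G]$ in the sense of Definition \ref{d.stack} (equivalently Definition \ref{d.compb}, by Remark \ref{r.triv}(1)), this lets us extract compatibility of the traces of $F$ on the $N_a$-graded pieces of $(L_i)_{\bar a}$; since the trace of $F$ on $(L_i)_{\bar a}$ itself is the sum of the traces on these graded pieces, we conclude. To handle the $F$ of arbitrary degree (in particular degree $0$, where the $1-q^{nd}$ trick of Proposition \ref{p.reg} degenerates) I would invoke Remark \ref{r.rat}: it is enough to verify compatibility for $F$ of large degree, where the Frobenius eigenvalues on distinct monodromy-weight pieces have distinct absolute values and the pieces can be separated.

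Concretely the steps are: (i) reduce to $\bar X_{(x)}$ local, and in case (2) perform a Kummer covering to reduce case (2) to a case-(1)-type hypothesis with a finite group $G$, so that it is enough to treat a common framework of \emph{tame unipotent} local monodromy; (ii) show the monodromy operators are unipotent (the congruence/$\log$ argument in case (1), automatic after the covering in case (2)); (iii) write down $Rj_*(L_i|_U)$ locally via the Koszul/$\bigwedge^\bullet N_a$ formula and note the coefficients are $i$-independent integers; (iv) apply Theorem \ref{t.six} to get that $(Rj_*(L_i|_U))$ is compatible on $[\bar X/G]$, hence the graded traces of $F$ form a compatible system; (v) for $F$ of large degree, separate the weight-graded pieces by Frobenius absolute values to recover compatibility of $\tr(F,(L_i)_{\bar a})$, then upgrade to all $F$ by Remark \ref{r.rat}. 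The main obstacle I expect is step (iii)--(v): making the "separate by absolute value'' argument go through cleanly requires knowing that the Frobenius acts on the monodromy filtration with the expected weight behavior, which over the residue point (a finite field) follows from Weil II-type estimates applied to the compatible system $Rj_*(L_i|_U)$, but over the equal-characteristic-$0$ fiber $X_K$ one instead uses Grothendieck's arithmetic local monodromy theorem (as packaged in Remark \ref{r.rat}) to bypass absolute values entirely; reconciling these two regimes in a single clean argument, and dealing carefully with the $G$-action on the branches of $D$ (which may permute them, so one should really work with the $D(x)$-set of branches and the corresponding $\log$-operators), is where the real work lies. A secondary subtlety is that $L_i$ is only a virtual sheaf, so "unipotent monodromy'' and the $\bigwedge^\bullet N_a$ formula must be formulated at the level of $K$-groups via an honest resolution on actual sheaves $\cF_i,\cG_i$ before passing to classes; this is routine but needs the hypotheses (1)/(2) to be stated on the sheaf level, as they are.
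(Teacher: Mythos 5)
Your overall architecture matches the paper's: reduce to tame monodromy via Abhyankar, make it unipotent via the congruence/$\log$ argument (Lemma \ref{l.mono}) and, in case (2), a Kummer covering; then use Theorem \ref{t.six} for $Rj_*$ together with absolute purity, and dispose of degree-$0$ elements via Remark \ref{r.rat}. But steps (iii)--(v) contain a genuine gap. Compatibility of $(Rj_*(L_i|_U))_{i\in I}$ gives you, for each $F$, \emph{one} compatible number $\tr(F,(Rj_*L_i)_{\bar x})$; it does not give you the traces of $F$ on the individual monodromy-graded pieces of $(L_i)_{\bar a}$, and your proposed remedy --- separating those pieces by Frobenius absolute values --- cannot work: the $L_i$ are arbitrary virtual lisse sheaves with no purity hypothesis, so there is no monodromy-weight structure to appeal to (Weil II estimates apply to the pure sheaves in Theorem \ref{t.ind}, not here), and over the generic fiber there are no absolute values at all. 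Remark \ref{r.rat} does not ``bypass'' this: it only reduces from all degrees to degrees $\neq 0$.

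The observation you are missing, and which makes the inversion trivial, is that a class in the Grothendieck group equals the class of its semisimplification, and a semisimple continuous representation of $\pi_1^t([X_{(x)}/D(x)],\bar a)$ on which the (normal) tame inertia $I_t$ acts unipotently has \emph{trivial} $I_t$-action: on each graded piece of the filtration attached to the nilpotent operator $\log\rho(g)$, $g\in I_t$ acts by $1$. Hence, once the monodromy is unipotent, $L_i|_{[X_{(x)}/D(x)]}$ is the pullback of a class $M_i$ on the residual gerbe $\xi=[x/D(x)]$, with $\tr(F,(L_i)_{\bar a})=\tr(F,(M_i)_{\bar x})$. The projection formula then gives $(Rj_*L_i)_\xi\simeq M_i\otimes_{\Qli}(Rj_*\Qli)_\xi$, absolute purity gives $\tr(F,(Rj_*\Q_\ell)_{\bar x})=(1-q^n)^d$ for $F$ of degree $n$, and one simply divides by $(1-q^n)^d$ when $n\neq 0$; no Koszul resolution, no graded pieces, and no weight separation are needed. (Your Koszul formula, if pushed into the Grothendieck group, collapses to exactly this projection-formula identity --- but only after the semisimplification step, since $\Ker(T-1)$ and the cokernel of $T-1$ need not have the same class as Weil-group representations before semisimplifying.) Two smaller corrections: the Kummer covering in case (2) is performed on the Henselization $\bar X_{(x)}$, for an open subgroup containing the image of the given $F$ of nonzero degree, so it introduces no group action; the group $G$ in case (1) is there because of the Galois alteration used later in Proposition \ref{p.key}, not because of this covering.
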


For the proof of Theorem \ref{t.main}, we will only need part (1). For the
proof of Proposition \ref{p.NCD}, we need a variant of Grothendieck's
arithmetic local monodromy theorem \cite[Appendix]{ST}. We say that a family
of matrices $\rho\colon E\to \GL_n(\Qlb)$ is \emph{quasi-unipotent} if each
$\rho(g)$, $g\in E$ is quasi-unipotent. By Remark \ref{r.unip} below, a
continuous representation $\rho\colon I\to \GL_n(\Qlb)$ of a profinite group
$I$ is quasi-unipotent if and only if $\rho$ is unipotent on an open
subgroup $I_0<I$.

\begin{remark}\label{r.unip}
Let $P_\ell^n\simeq \Qlb^n$ be the space of monic polynomials of degree $n$.
The subset $P_\ell^{n,\qu}\subseteq P_\ell^n$ of polynomials whose roots are
roots of unity is discrete and closed. This follows from continuity of roots
and the fact that the only root of unity in $1+\ell^c\Zlb$ is $1$, where
$c>\frac{1}{\ell-1}$ is any rational number.

The function $M_n(\Qlb)\to P_\ell^n$ carrying an $n\times n$ matrix to its
characteristic polynomial is continuous. It follows that the subset
$\QU_n(\Qlb)\subseteq M_n(\Qlb)$ of quasi-unipotent matrices is closed, and
the subgroup of unipotent matrices $\Unip_n(\Qlb)<\QU_n(\Qlb)$ is open.
\end{remark}

\begin{lemma}\label{l.mono}
Consider short exact sequences of profinite groups
\[1\to P\to I\to I_\ell\to 1,\quad 1\to I\to G\to G_s\to 1,\]
with $P$ of supernatural order prime to $\ell$ and $I_\ell$ pro-$\ell$
Abelian. Assume that the conjugation action of $G_s$ on $I_\ell$ is given by
a character $\chi\colon G_s\to \Z_\ell^\times$ of infinite order. Then any
continuous representation $\rho\colon G\to \GL_n(\Qlb)$ is quasi-unipotent
on~$I$. Moreover, for any rational number $c>\frac{1}{\ell-1}$, we have
\[\QU_n(\Qlb)\cap (1+\ell^cM_n(\Zlb))\subseteq \Unip_n(\Qlb).\]
\end{lemma}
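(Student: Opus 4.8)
The plan is to prove the two assertions of Lemma~\ref{l.mono} essentially independently, using the second (the explicit containment $\QU_n(\Qlb)\cap(1+\ell^cM_n(\Zlb))\subseteq\Unip_n(\Qlb)$) as a tool for the first. For the containment: if $A\in\QU_n(\Qlb)$ lies in $1+\ell^cM_n(\Zlb)$, then its eigenvalues are roots of unity lying in $1+\ell^c\Zlb$ (after passing to a finite extension the entries stay integral and the congruence is preserved), and by the elementary fact recalled in Remark~\ref{r.unip} the only such root of unity is $1$; hence all eigenvalues of $A$ equal $1$, i.e.\ $A$ is unipotent. This is the easy half and I would dispatch it in a sentence or two.

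For the quasi-unipotence of $\rho$ on $I$, the strategy is first to reduce to $I_\ell$ and then exploit the infinite-order character $\chi$. Since $P$ has supernatural order prime to $\ell$ and $\rho$ is continuous, $\rho(P)$ is a finite group of order prime to $\ell$; after replacing $\rho$ by its restriction to a suitable finite-index subgroup I may arrange that $\rho(P)$ lies in a small neighborhood of $1$ where the only finite subgroup is trivial (or, more cleanly, note that a pro-(order prime to $\ell$) subgroup of $\GL_n(\Qlb)$ maps to a finite group and finite subgroups of $\GL_n$ over a field of characteristic zero are quasi-unipotent only if trivial in a neighborhood of $1$ — so $P$ acts through a finite quotient and its elements are automatically quasi-unipotent, being of finite order). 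The real point is $I_\ell$: here $I_\ell$ is pro-$\ell$ abelian, so $\rho(I_\ell)$ is a pro-$\ell$ abelian subgroup of $\GL_n(\Qlb)$; such a subgroup, being topologically finitely generated after restricting to an open subgroup and pro-$\ell$, lands in $\GL_n(E)$ for a finite extension $E/\Q_\ell$ and, up to conjugation, in $1+\ell^c M_n(\cO_E)$ for a suitable $c>\frac1{\ell-1}$ — this is the standard fact that a pro-$\ell$ subgroup of $\GL_n(\Qlb)$ is conjugate into a congruence subgroup. Combined with the conjugation action of $G_s$ through $\chi$ of infinite order, one sees that any eigenvalue $\lambda$ of $\rho(g)$ for $g\in I_\ell$ satisfies $\lambda^{\chi(s)}$ also an eigenvalue of $\rho(sgs^{-1})=\rho(g)^{?}$... more precisely the $\GL_n$-conjugate $\rho(s)\rho(g)\rho(s)^{-1}=\rho(g^{\chi(s)})$ (using abelianness and that conjugation by $s$ raises $g$ to the $\chi(s)$ power) has the same eigenvalues as $\rho(g)$, forcing the multiset of eigenvalues to be stable under $\lambda\mapsto\lambda^{\chi(s)}$ for all $s$; since $\chi$ has infinite order this orbit is finite only if each $\lambda$ is a root of unity. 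Hence $\rho$ is quasi-unipotent on $I_\ell$, and together with the finite-order elements of $P$ this gives quasi-unipotence on $I$. (Equivalently, one can cite that $\rho|_{I_\ell}$, being pro-$\ell$ into a congruence subgroup, is automatically unipotent after an open subgroup, again by the second assertion applied entrywise.)

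I expect the main obstacle to be the bookkeeping in the $I_\ell$ step: one must be careful that conjugation by $G_s$ acting on $I_\ell$ via $\chi$ translates, after applying $\rho$, into the relation $\rho(s)\rho(g)\rho(s)^{-1}=\rho(g)^{\chi(s)}$ inside $\GL_n(\Qlb)$ — this uses both that $I_\ell$ is abelian (so raising to the $\chi(s)$-th power is well-defined and continuous) and that $g^{\chi(s)}$ makes sense as a $\Z_\ell$-power, which requires $\rho(g)$ to lie in a pro-$\ell$ group where $\Z_\ell$-powers converge; that is exactly why we first conjugate $\rho(I_\ell)$ into a congruence subgroup $1+\ell^c M_n(\cO_E)$. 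Once that is set up, the eigenvalue argument (eigenvalues permuted by $\lambda\mapsto\lambda^{\chi(s)}$, $\chi$ of infinite order $\Rightarrow$ eigenvalues are roots of unity) is routine. The quasi-unipotence of the $P$-part is then immediate since $\rho(P)$ is finite, and assembling these on $I$ (using that $I$ is generated topologically by $P$ and lifts of $I_\ell$, and quasi-unipotence of a single element is a property of that element, checked via its characteristic polynomial) completes the proof. Finally, the displayed containment is proved as above and used, if desired, to streamline the conclusion that $\rho$ is even unipotent on a suitable open subgroup of $I$.
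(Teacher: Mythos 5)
Your handling of the second assertion is correct: the eigenvalues of a matrix in $1+\ell^cM_n(\Zlb)$ lie in $1+\ell^c\Zlb$ (they are $1$ plus $\ell^c$ times the eigenvalues of an integral matrix, which are integral since $\Zlb$ is integrally closed), so a quasi-unipotent such matrix has all eigenvalues equal to $1$ by the fact recalled in Remark \ref{r.unip}. This is a harmless variant of the paper's argument via $\log(U)=\frac{1}{a}\log(U^a)$. Likewise, your eigenvalue-orbit argument for the inertia part is in substance Grothendieck's argument, which the paper phrases through the identity $\tr(M^m)=\chi(g)^m\tr(M^m)$ for $M=\log\rho(t)$.

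The setup for the first assertion, however, has two genuine gaps. First, the ``standard fact'' you invoke --- that a pro-$\ell$ subgroup of $\GL_n(\Qlb)$ is conjugate into a congruence subgroup $1+\ell^cM_n(\cO_E)$ with $c>\frac{1}{\ell-1}$ --- is false: such congruence subgroups are torsion-free, whereas a pro-$\ell$ group may have torsion (e.g.\ $\mu_\ell\subset\GL_1(\Qlb)$). Second, and more seriously, $\rho$ does not restrict to $I_\ell$: the latter is a quotient of $I$, not a subgroup, so ``$\rho(I_\ell)$'' and the relation $\rho(s)\rho(g)\rho(s)^{-1}=\rho(g)^{\chi(s)}$ only make sense once $\rho(P)=1$, while you establish only that $\rho(P)$ is finite. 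Your closing ``assembling'' step does not repair this: a general $t\in I$ is not in $P$, the expression $t^{\chi(s)}$ is not defined in $I$ (one only knows that the image of $sts^{-1}$ in $I_\ell$ is $\chi(s)$ times that of $t$), and quasi-unipotence of separate pieces would not in any case pass to products. Both gaps are closed by the single move the paper makes: after conjugating the compact group $\rho(G)$ into $\GL_n(\cO_E)$, replace $G$ by the open subgroup $\rho^{-1}(1+\ell^cM_n(\Zlb))$; it suffices to prove unipotence there, since every element of $I$ has a power in the new $I$. On this subgroup $\rho(P)=1$ automatically, because $1+\ell^cM_n(\cO_E)$ is pro-$\ell$ and hence contains no nontrivial image of a group of supernatural order prime to $\ell$; then $\rho|_I$ genuinely factors through an open subgroup of $I_\ell$, lands in the congruence subgroup by construction, and your eigenvalue argument applies verbatim.
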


\begin{proof}
Let $U\in \QU_n(\Qlb)\cap (1+\ell^cM_n(\Zlb))$. Then $U^a$ is unipotent for
some integer $a>0$, and $\log(U)=\frac{1}{a}\log(U^a)$ is nilpotent, so that
$U=\exp(\log(U))$ is unipotent. This proves the second assertion.

The proof of the first assertion is identical to that of Grothendieck. Up to
replacing $G$ by an open subgroup, we may assume that $\rho$ factors through
the open subgroup $1+\ell^c M_n(\Zlb)$. Then $\rho(P)=1$. Take $g\in G_s$
such that $\chi(g)$ is not a root of unity. For $t\in I$, $\rho(t)$ is
conjugate to $\rho(t)^{\chi(g)}$, so that $M=\log(\rho(t))$ is conjugate to
$\log(\rho(t)^{\chi(g)})=\chi(g)\log(\rho(t))=\chi(g)M$. Thus
$\chi(g)^m\tr(M^m)=\tr(M^m)$, so that $\tr(M^m)=0$ for all $m\ge 1$.
Therefore, $M$ is nilpotent and $\rho(t)=\exp(M)$ is unipotent.
\end{proof}

\begin{proof}[Proof of Proposition \ref{p.NCD}]
The proof is similar to a part of Deligne's proof of \cite[Th\'eor\`eme
9.8]{Deligne}.

We may assume that the index set $I$ is finite. Let $L_i=[\cF_i]-[\cG_i]$
for $\cF_i$ and $\cG_i$ lisse on $[X/G]$. Let $x\in \lvert D\rvert$. Lemma
\ref{l.mono} applies to the tame fundamental group
$\pi_1^t([X_{(x)}/D(x)],\bar a)$ (cf.\ \cite[1.7.12.1]{WeilII} in the case
$x$ above $k$). Indeed, by Abhyankar's lemma \cite[XIII Corollaire
5.3]{SGA1}, we have a short exact sequence
\[1\to I_t\to \pi_1^t([X_{(x)}/D(x)],\bar a) \xrightarrow{r} \pi_1([x/D(x)],\bar x) \to 1,\]
where $I_t=\prod_\ell\Z_\ell(1)^d$, $d$ is the number of irreducible
components of $D\times_{\bar X} \bar X_{(x)}$ and $\ell$ runs through primes
different from the characteristic of $x$. Note that for any semisimple
continuous representation $\rho\colon \pi_1^t([X_{(x)}/D(x)],\bar a)\to
\GL_n(\Qlb)$ and any $g\in I_t$ with $\rho(g)$ unipotent, we have
$\rho(g)=1$. Indeed, on each graded piece of the monodromy filtration given
by the nilpotent operator $\log(\rho(g))$, $g$ acts by $1$.

In case (1) $I_t$ acts unipotently on $(\cF_i)_{\bar a}$ and $(\cG_i)_{\bar
a}$. In case (2) there exists an open subgroup $I'_t$ of $I_t$ that acts
unipotently on $(\cF_i)_{\bar a}$ and $(\cG_i)_{\bar a}$. Each $g\in
W(X_{(x)},\bar a)$ of degree $0$ acts quasi-unipotently on $(\cF_i)_{\bar
a}$ and $(\cG_i)_{\bar a}$. As in the proof of \cite[Proposition
1.15]{Zind}, there exists a subgroup $G<W(X_{(x)},\bar a)$ of finite index
such that the action of $g$ commutes with that of $G$ up to
semisimplification. By \cite[Lemma 8.1]{IllMisc}, it suffices to consider
$F\in W(X_{(x)},\bar a)$ of degree $\neq 0$. There exists an open subgroup
$H<\pi_1^t(X_{(x)},\bar a)$ containing the image of $F$ such that $H\cap
I_t\subseteq I'_t$. We may further assume that $H\cap I_t$ has the form
$NI_t$ for an integer $N>0$ invertible on $x$. Let $\bar Y_{(y)}$ be the
normalization of $\bar X_{(x)}$ in the pointed finite \'etale cover
$(Y_{(y)},\bar b)$ of $(X_{(x)},\bar a)$ corresponding to $H$. Then $F\in
W(Y_{(y)},\bar b)$. Moreover, $\bar Y_{(y)}$ is regular and the inverse
image of $D$ is a normal crossing divisor. Indeed, if $\bar X_{(\bar x)}$
and $\bar Y_{(\bar y)}$ denote the strict Henselizations and the irreducible
components of $D\times_{\bar X} \bar{X}_{(\bar x)}$ are defined by
$t_1,\dots, t_d$, then $\bar Y_{(\bar y)}\simeq \bar X_{(\bar
x)}[t_1^{1/N},\dots, t_d^{1/N}]$. Therefore, up to replacing $\bar X$ by
$\bar Y$ quasi-finite over $\bar X$ giving rise to $\bar Y_{(y)}$, we may
assume that $I_t$ acts unipotently on $(\cF_i)_{\bar a}$ and $(\cG_i)_{\bar
a}$.

Then the semisimplifications of $\cF_i|_{[X_{(x)}/D(x)]}$ and
$\cG_i|_{[X_{(x)}/D(x)]}$ factor through $r$, so that
$L_i|_{[X_{(x)}/D(x)]}$ is the pullback of $M_i\in K(\xi,\Qlb)$ via $r$,
where $\xi=[x/D(x)]$. Let $j\colon [X/G]\to [\bar X/G]$ be the open
immersion. By Theorem \ref{t.six}, $(Rj_*L_i)_{i\in I}$ is compatible. By
projection formula,
\[M_i\otimes_{\Qli} (Rj_*\Qli)_\xi\simeq (Rj_*L_i)_\xi.\]
Gabber's absolute purity theorem, extended to Deligne-Mumford stacks in the
proof of Proposition \ref{p.reg}, implies (see \cite[Theorem 7.2]{IllAst},
\cite[Corollaire 3.1.4]{Riou})
\[(R^m j_*\Q_\ell)_\xi\simeq \begin{cases}
\Q_\ell(-m)^{\binom{d}{m}}& 0\le m\le d,\\
0&\text{otherwise}.
\end{cases}
\]
Thus for $F\in W(\xi,\bar x)$ of degree $n$,
\[\tr(F,(R j_*\Q_\ell)_{\bar x})=(1-q^n)^d.\]
It follows that for $n\neq 0$, $\tr(F,(M_i)_{\bar x})$ can be recovered from
$\tr(F,(Rj_* L_i)_{\bar x})$. Therefore, $(M_i)_{i\in I}$ is compatible by
Remark \ref{r.rat}.
\end{proof}

\begin{prop}\label{p.key}
Let $X$ be an integral normal scheme separated of finite type over~$S$ and
let $(L_i)\in \prod_{i\in I}K_\lisse(X,\Qlib)$ be a compatible system with
$I$ finite. Then there exist a proper morphism $f\colon X'\to X$ with $X'$
integral normal inducing a universal homeomorphism $f^{-1}(U)\to U$ for some
nonempty open $U\subseteq X$, and a normal compactification $X'\subseteq
\bar X'$ over $S$, such that $(f^*L_i)_{i\in I}$ is compatible on $\bar X'$.
\end{prop}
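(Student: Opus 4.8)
The plan is to reduce to the situation of Proposition \ref{p.NCD}(1) by combining de Jong's alterations with a level-lowering (spreading-out) argument for the coefficients. First I would pick a compactification $X\subseteq \bar X_0$ over $S$ with $\bar X_0$ proper (not yet normal or regular); then, applying de Jong's theorem on alterations to the pair $(\bar X_0, \bar X_0-X)$ over $S$, I obtain a regular scheme $\bar X_1$ of finite type over $S$, a proper dominant generically finite morphism $\bar X_1\to \bar X_0$, and a normal crossing divisor $D_1\subseteq \bar X_1$ containing the preimage of $\bar X_0-X$, so that $X_1\colonequals \bar X_1-D_1$ maps to $X$. The morphism $X_1\to X$ is generically finite; after passing to the Galois closure of the corresponding extension of function fields I may arrange that there is a finite group $G$ acting on $\bar X_1$ (enlarging $\bar X_1$ if necessary so the action extends, again by normalization and equivariant resolution/alteration) with $X_1$ $G$-stable and $X_1/G\to X$ a universal homeomorphism over a nonempty open $U\subseteq X$. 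This is the step that produces the proper morphism $f$ with the required universal-homeomorphism property: set $X'$ to be the normalization of $X$ in the function field of $X_1/G$ and $\bar X'$ its closure in $\bar X_1/G$.

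The second ingredient is to descend each $L_i$ to a tamely ramified, unipotent-monodromy situation. By Remark \ref{r.rat} it suffices to check compatibility of traces on Weil elements of large degree, and by Remark \ref{r.triv}(2) we are free to shrink $X_1$. For each $i$, write $L_i|_{X_1}=[\cF_i]-[\cG_i]$ with $\cF_i,\cG_i$ lisse $\Zlib$-sheaves (possible after shrinking the open set and twisting). Using the arithmetic local monodromy theorem, the local monodromy along each branch of $D_1$ is quasi-unipotent, so after a further alteration of Kummer type — adjoining $N$-th roots of the local equations $t_1,\dots,t_d$ of $D_1$ for a suitable $N$ invertible on $\bar X_1$, exactly as in the proof of Proposition \ref{p.NCD} — I may assume $\cF_i,\cG_i$ have unipotent local monodromy along $D_1$, hence are tamely ramified on the new regular compactification with a new NCD. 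Finally, by Lemma \ref{l.mono} (the $1+\ell^c M_n(\Zlb)$ clause) together with the openness of $\Unip_n$ in $\QU_n$ from Remark \ref{r.unip}, after replacing $\cF_i$ and $\cG_i$ by a finite-index subgroup of $\pi_1$ — i.e.\ after one more finite étale cover, absorbed into $\bar X'$ — the reductions $\cF_i\otimes \Zlib/\ell_i^c$ and $\cG_i\otimes \Zlib/\ell_i^c$ become constant, which is precisely hypothesis (1) of Proposition \ref{p.NCD}.

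With these reductions in place, Proposition \ref{p.NCD}(1) gives that $(L_i|_{[X_1/G]})_{i\in I}$ is compatible on $[\bar X_1/G]$ for the (repeatedly modified) regular $G$-scheme $\bar X_1$ and NCD $D_1$. Since $G$ acts freely on a nonempty $G$-stable open $V\subseteq X_1$ (the generic point has trivial stabilizer, as the extension was Galois), Remark \ref{r.triv}(3) translates compatibility on $[\bar X_1/G]$ into compatibility of $(f^*L_i)_{i\in I}$ on the genuine quotient $\bar X_1/G\supseteq V/G$, and then Remark \ref{r.triv}(2) lets us enlarge $V/G$ to all of $X'$; shrinking $U$ so that $f^{-1}(U)\subseteq V$ gives the universal homeomorphism clause. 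Replacing $\bar X'$ by its normalization in $X'$ (which does not affect the points of $\bar X'$ that matter, since normalization is a universal homeomorphism away from codimension one and we only need Definition \ref{d.1} at points of $\lvert \bar X'\rvert$) yields a normal compactification as required. The main obstacle I anticipate is the bookkeeping in the equivariant step: ensuring that the $G$-action extends to a regular projective compactification with $D$ still a (strict, if needed) normal crossing divisor, and that all the successive Kummer and finite-étale modifications can be performed $G$-equivariantly while keeping track of which open subscheme $U\subseteq X$ survives as a universal-homeomorphic image; this is where one must be careful, and it is the place where de Jong's equivariant alterations (or Gabber's refinement) do the real work.
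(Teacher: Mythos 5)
You have assembled the right ingredients (equivariant alterations, Proposition \ref{p.NCD}(1), Remark \ref{r.triv}), but the order in which you apply them creates a genuine gap at the very last reduction. You postpone the step that makes $\cF_i\otimes\Zlib/\ell_i^c\Zlib$ and $\cG_i\otimes\Zlib/\ell_i^c\Zlib$ constant to the end, achieving it by ``one more finite \'etale cover, absorbed into $\bar X'$.'' But that cover of $X_1$ is in general ramified along $D_1$, so its normalization over $\bar X_1$ is no longer regular with normal crossing complement, and Proposition \ref{p.NCD}(1) needs the regular NCD compactification, the group action, \emph{and} the constant reductions simultaneously. To restore the NCD structure you would need yet another equivariant alteration, which you do not perform; ``absorbed into $\bar X'$'' is precisely where the argument breaks. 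The fix is to reverse the order: first take a connected finite \'etale Galois cover $Y\to X$ of group $G$ trivializing the reductions (this is possible since $I$ is finite), and only then apply Gabber's refinement of de Jong's equivariant alterations (\cite[Lemme 3.8]{Zind}) to the $G$-scheme $Y$ (over the normalization $T$ of $S_0$ in $Y$), producing a Galois alteration $(Z,H)\to(Y,G)$ with $\bar Z$ regular projective and NCD complement. Constancy of the reductions is preserved under pullback, so hypothesis (1) of Proposition \ref{p.NCD} holds on $[Z/H]$ with no further modification, and Remark \ref{r.triv}(3) descends the conclusion to $X'=Z/H$ and $\bar X'=\bar Z/H$; the universal homeomorphism over some $U$ is part of the definition of Galois alteration rather than something you must engineer by hand.

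Your intermediate Kummer step is also misplaced. Making the local monodromy unipotent by adjoining $N$-th roots of local equations is part of the internal proof of Proposition \ref{p.NCD} in case (2), carried out on Henselizations at individual points; it is not needed if you are invoking case (1), because the constancy of the reductions mod $\ell_i^c$ already forces quasi-unipotent tame monodromy to be unipotent by the second clause of Lemma \ref{l.mono}. Attempting to perform it globally raises real difficulties (the $t_i$ are only local equations, the components of $D_1$ need not be globally principal, and the construction must be $H$-equivariant while preserving regularity and the NCD property), none of which you resolve. Dropping this step entirely, and moving the coefficient-trivializing cover to the front as above, turns your outline into a correct proof — indeed essentially the paper's.
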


\begin{proof}
We write $L_i=([\cF_i]-[\cG_i])\otimes_{\Zlib}\Qlib$. There exists a
connected finite \'etale cover $Y\to X$, Galois of group $G$, such that
$\cF_i\otimes_{\Zlib} \Zlib/2\ell\Zlib$ and $\cG_i\otimes_{\Zlib}
\Zlib/2\ell\Zlib$ are constant.

Let $S_0$ be the closed point of $S$ if $X_K$ is empty and $S$ otherwise. We
apply Gabber's refinement of de Jong's equivariant alterations \cite{dJ2} in
the form of \cite[Lemme 3.8]{Zind}, to the $G$-equivariant morphism $Y\to
T$, where $T$ is the normalization of $S_0$ in $Y$. There exists a Galois
alteration $(Z,H)\to (Y,G)$ and an $H$-equivariant open immersion
$Z\subseteq \bar Z$ with $\bar Z$ regular and projective over $S$. Moreover,
there exists an $H$-stable open subscheme $V\subseteq Z$ whose complement in
$\bar Z$ is a normal crossing divisor. Let $f\colon X'\colonequals Z/H\to
Y/G\simeq X$. By the definition of Galois alteration, there exists a
nonempty $H$-stable affine open subscheme $V_0\subseteq V$ on which $H$ acts
freely and a nonempty open subscheme $U\subseteq X$ such that $f$ induces a
universal homeomorphism $f^{-1}(U)\to U$.

By Proposition \ref{p.NCD}, $(L_i|_{[V/H]})_{i\in I}$ is compatible on
$[\bar Z/H]$. By Remark \ref{r.triv} (3), $(L_i|_{V_0/H})_{i\in I}$ is
compatible on $\bar X'\colonequals \bar Z/H$, and the proposition follows.
\end{proof}

\begin{lemma}\label{l.uh}
Let $f\colon Y\to X$ be a universal homeomorphism between normal schemes
separated of finite type over a Noetherian Nagata scheme $T$. Then for any
normal compactification $\bar Y$ of $Y$ over $T$, there exists a commutative
diagram over $T$
\[
\xymatrix{Y\ar@{^{(}->}[r]\ar[d]_{f} &\bar Y\ar[d]^{\bar f}\\
X\ar@{^{(}->}[r] & \bar X,}
\]
where $\bar X$ is a normal compactification of $X$ over $T$ and $\bar f$ is
a universal homeomorphism identifying $\bar Y$ with the normalization of
$\bar X$ in $Y$.
\end{lemma}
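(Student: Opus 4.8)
The plan is to build $\bar X$ directly from $\bar Y$ by coarsening the structure sheaf of $\bar Y$ along the subfield $K(X)\subseteq K(Y)$. First I would reduce to the case where $X$, $Y$ and $\bar Y$ are integral: since $\bar Y$ is normal, its connected components are its irreducible components, each meets the dense open $Y$ in a connected component of $Y$, and under the homeomorphism $f$ these match the connected components of $X$, so the assertion decomposes as a disjoint union over connected components. As $f$ is radicial, $K(Y)/K(X)$ is purely inseparable; if it is trivial then $f$ is an isomorphism (as $\mathcal{O}_X$ is normal and $f_*\mathcal{O}_Y$ is finite over it with the same fraction field) and $\bar X=\bar Y$ works, so we may assume it nontrivial, whence $p:=\mathrm{char}\,K(X)$ is positive and $K(Y)^{p^n}\subseteq K(X)$ for some $n\ge 1$.

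Next I would set $\bar X:=(\lvert\bar Y\rvert,\mathcal{A})$, where $\mathcal{A}$ is the sheaf of rings given by $\mathcal{A}(W)=\mathcal{O}_{\bar Y}(W)\cap K(X)$ for open $W\subseteq\bar Y$ (intersection inside $K(Y)$). On an affine open $\Spec B\subseteq\bar Y$, put $A:=B\cap K(X)$; since $B$ is normal with fraction field $K(Y)$, $A$ is integrally closed in $\mathrm{Frac}(A)=K(X)$, hence a normal domain. Every $b\in B$ satisfies $b^{p^n}\in K(Y)^{p^n}\subseteq K(X)$, hence $b^{p^n}\in B\cap K(X)=A$, so finitely many $T$-algebra generators of $B$ are integral over $A$; thus $B$ is module-finite over $A$, $A$ is Noetherian by the Eakin--Nagata theorem, and $A$ is of finite type over the corresponding affine of $T$ by the Artin--Tate lemma. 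Moreover $\Spec B\to\Spec A$ is finite, surjective and radicial (again by the $p^n$-th power property), hence a universal homeomorphism. Consequently $\bar X$ is a normal scheme of finite type over $T$, locally of the form $\Spec(\mathcal{O}_{\bar Y}(W)\cap K(X))$, and the tautological morphism $\bar f\colon\bar Y\to\bar X$ is a finite universal homeomorphism; since $\bar f$ is finite and surjective and $\bar Y$ is proper over $T$, $\bar X$ is proper over $T$ (separatedness and universal closedness descend along finite surjective morphisms).

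Finally I would check the remaining compatibilities. For $y$ in the open $\lvert Y\rvert=\lvert X\rvert$ one has $\mathcal{O}_{\bar Y,y}=\mathcal{O}_{Y,y}$ and $\mathcal{O}_{Y,y}\cap K(X)=\mathcal{O}_{X,y}$ (an element of the left side is integral over the normal ring $\mathcal{O}_{X,y}$ and lies in its fraction field), whence $\mathcal{A}|_Y=\mathcal{O}_X$, so $X$ is a dense open subscheme of $\bar X$ and $\bar f|_X=f$. On an affine piece as above, $B$ is integral over $A=B\cap K(X)$, is normal, and has total fraction ring $K(Y)$, so $B$ is precisely the integral closure of $A$ in $K(Y)$; hence $\bar Y$ is the normalization of $\bar X$ in $Y$. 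I expect the main obstacle to be the ring-theoretic finiteness---that $B$ is module-finite over $B\cap K(X)$ with the latter Noetherian and of finite type over the base---which is exactly where the Noetherian Nagata hypothesis on $T$ and the Eakin--Nagata and Artin--Tate theorems come in; the descent of properness along $\bar f$ is the other point requiring care.
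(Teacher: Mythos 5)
Your proposal is correct, but it takes a genuinely different route from the paper. The paper also reduces to the integral case with $K(Y)^{p^n}\subseteq K(X)$, but then factors the $n$-th relative Frobenius as $Y\xrightarrow{f}X\to Y^{(p^n)}$ and defines $\bar X$ as the (relative) normalization of the Frobenius twist $\bar Y^{(p^n)}$ in $X$; the finiteness of this normalization is where the Nagata hypothesis is actually used. You instead build $\bar X$ ``from above'' as the locally ringed space $(\lvert\bar Y\rvert,\mathcal O_{\bar Y}\cap K(X))$, and replace finiteness of normalization by the elementary observation that $b^{p^n}\in A$ for all $b\in B$ together with Artin--Tate (which already gives Noetherianity of $A$, so the Eakin--Nagata step is redundant). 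Contrary to your closing remark, your argument therefore does not use the Nagata hypothesis at all, only Noetherianity of $T$ --- a small gain in generality; the price is that you must verify by hand that your ringed space is a scheme, i.e.\ that $(\lvert\Spec B\rvert,\mathcal A)$ is $\Spec A$. This is true but deserves a sentence: the opens $D(a)_B$ with $a\in A$ form a basis of $\lvert\Spec B\rvert$ because $D(b)=D(b^{p^n})$, and $B_a\cap K(X)=A_a$ since $x=b/a^k\in K(X)$ forces $a^kx\in B\cap K(X)=A$. Your descent of properness along the finite surjective $\bar f$ (separatedness and universal closedness both descend along surjective universally closed morphisms) is also correct. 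Both approaches work; the paper's is shorter because it quotes finiteness of normalization, yours is more self-contained and slightly more general.
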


\begin{proof}
We may assume $X$ connected and that $f$ is not an isomorphism. Let
$K(X)\subseteq K(Y)$ be the fraction fields. There exists $n$ such that
$K(Y)^{p^n}\subseteq K(X)$, where $p>0$ is the characteristic of $K(X)$. Up
to replacing $T$ by a closed subscheme, we may assume $X\to T$ dominant. The
$n$-th relative Frobenius factors as $Y\xrightarrow{f} X\to Y^{(p^n)}$. We
take $\bar X$ to be the normalization of $\bar Y^{(p^n)}$ in $X$. The
morphism $\bar f\colon \bar Y\to \bar X$ is finite, surjective, and
radicial, hence a universal homeomorphism.
\end{proof}

\begin{proof}[Proof of Theorem \ref{t.main}]
We may assume $X$ reduced. By Proposition \ref{p.key} and Lemma \ref{l.uh},
there exist an integral normal open subscheme $X_0\subseteq X$ and a normal
compactification $X_0\subseteq \bar X_0$ such that $(L_i|_{X_0})_{i\in I}$
is compatible on $\bar X_0$. We conclude by Noetherian induction.
\end{proof}

The theorem takes the following form in the case of curves, which is a
theorem of Deligne \cite[Th\'eor\`eme 9.8]{Deligne} in the case of curves
over finite fields.

\begin{cor}\label{c.main}
Let $\bar X$ be a smooth curve over $k$ or $K$ and let $X\subseteq \bar X$
be a dense open subscheme. Then any compatible system $(L_i) \in \prod_{i\in
I}K_\lisse(X,\Qlib)$ is compatible on $\bar X$.
\end{cor}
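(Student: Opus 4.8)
The plan is to deduce Corollary \ref{c.main} from Theorem \ref{t.main} together with Proposition \ref{p.reg}. First I would reduce to the case where $\bar X$ is connected, hence integral, and where $X\subsetneq \bar X$, since there is nothing to prove when $X=\bar X$; note that $\bar X$ is already a normal (indeed regular) scheme of finite type over $S$, proper over $S$ because it is a smooth proper curve over $k$ or $K$. Thus $\bar X$ is itself a normal compactification of $X$ over $S$, and the content of the corollary is that no stratification or modification is needed in this low-dimensional situation.

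The key point is that the complement $\bar X - X$ consists of finitely many closed points, each of which, together with its Henselization, looks like the spectrum of a Henselian (or strictly Henselian) local ring of a smooth curve. For such a point $x$, the scheme $X_{(x)} = \bar X_{(x)}\times_{\bar X} X$ is the spectrum of a field (the fraction field of the Henselization), and $W(X_{(x)},\bar a)$ is precisely the decomposition-group Weil group. So the task is to check, for each boundary point $x$, that $(\tr(F,(L_i)_{\bar a}))_{i\in I}$ is compatible for all $F$ in this Weil group. The strategy is to apply Theorem \ref{t.main} to the compatible system $(L_i)$ on $X$: it produces a finite stratification $X=\bigcup_\alpha X_\alpha$ by normal subschemes, each with a normal compactification $\bar X_\alpha$ over $S$ on which the restricted system is compatible. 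Since $X$ is a curve, one of the strata $X_\alpha$ is a dense open subscheme $X'\subseteq X$, and $(L_i|_{X'})_{i\in I}$ is compatible on $\bar X'$.

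Now I would compare $\bar X'$ with $\bar X$. The inclusion $X'\subseteq X\subseteq \bar X$ and the normal compactification $\bar X'$ fit together: since $\bar X$ is a normal proper compactification of $X'$ as well, and normal compactifications of curves over $S$ are governed by the function field together with a finite set of places, there is a canonical dominant proper morphism relating $\bar X'$ and $\bar X$; more precisely, after possibly passing to a common normalization (which is again a smooth proper curve with the same generic point), compatibility on one transfers to the other. Concretely, for each boundary point $x$ of $\bar X$ lying over a boundary point $x'$ of $\bar X'$, the Weil group $W(X'_{(x')},\bar a)$ surjects onto $W(X_{(x)},\bar a)$ up to finite index issues handled by Remark \ref{r.rat}, so compatibility of traces on $\bar X'$ forces compatibility on $\bar X$ at $x$. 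This shows $(L_i|_{X'})_{i\in I}$ is compatible on $\bar X$. Finally, apply Proposition \ref{p.reg} with the regular scheme $X$ (of finite type over $S$), the system $(L_i)\in \prod_i K_\lisse(X,\Qlib)$, and the dense open $U=X'\subseteq X$: since $(L_i|_{X'})_{i\in I}$ is compatible, $(L_i)_{i\in I}$ is compatible on all of $X$. Repeating this argument for the boundary $\bar X-X$, or rather applying the same reasoning with $X$ in place of $X'$ and $\bar X$ in place of $\bar X'$ once compatibility on $\bar X'$ is known, yields that $(L_i)_{i\in I}$ is compatible on $\bar X$.

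The main obstacle I anticipate is the bookkeeping in the comparison step between the compactification $\bar X'$ furnished by Theorem \ref{t.main} and the given compactification $\bar X$: one must check that "compatible on $\bar X'$" really propagates to "compatible on $\bar X$" at every boundary point, which requires understanding the map on decomposition/Weil groups of the Henselizations under a proper birational (indeed, since we are dealing with normal curves, finite) morphism of curves, and invoking Remark \ref{r.rat} to pass from $W^{\ge N}$ back to the full Weil group when the index of the relevant open subgroup intervenes. Everything else is a routine reduction, using that a curve has only finitely many boundary points and that Proposition \ref{p.reg} is exactly the statement that compatibility extends across a regular (here: divisorial) boundary.
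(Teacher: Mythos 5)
Your overall strategy --- deducing the corollary from Theorem \ref{t.main} --- is the one the paper intends (it presents the corollary as the form the theorem ``takes'' for curves, offering as an alternative a direct adaptation of the proof of Proposition \ref{p.NCD} with $\pi_1^t$ replaced by $\pi_1$). But two steps of your execution do not go through as written. First, $\bar X$ is not assumed proper, and even a proper smooth curve over $K$ is \emph{not} proper over $S$ (the morphism $\Spec(K)\to S$ is not proper), so $\bar X$ is not ``itself a normal compactification of $X$ over $S$''. More seriously, when $\bar X$ is a curve over $K$, the normal compactification $\bar X'$ of the dense stratum $X'$ furnished by Theorem \ref{t.main} is a compactification \emph{over $S$}, hence a two-dimensional arithmetic surface; there is no ``common normalization which is again a smooth proper curve'', and normal $S$-models of a $K$-curve are far from unique. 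So the comparison step cannot be carried out by uniqueness of the smooth proper model (that argument is fine for curves over $k$). What is needed instead is the valuative criterion of properness: for $x\in\bar X-X$, the Henselian trait $V=\Spec(\cO^h_{\bar X,x})$ maps to $\bar X'$ extending $X_{(x)}\to X'$, its closed point lands at a locally closed point $x'\in\lvert\bar X'\rvert$, and one obtains $W(X_{(x)},\bar a)\to W(X'_{(x')},\bar a)$, whence compatibility at $x$. This is exactly the mechanism of the proof of Corollary \ref{c.val} and is the missing ingredient; Remark \ref{r.rat} and ``finite index issues'' play no role here.

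Second, Proposition \ref{p.reg} is the wrong tool for your final step: its conclusion is that $(L_i)_{i\in I}$ is compatible \emph{on $X$} in the sense of Definition \ref{d.stack} (equivalently, compatible on $X$ itself by Remark \ref{r.triv} (1)), which is your hypothesis; it says nothing about the boundary $\bar X-X$, so invoking it is circular. The correct way to pass from ``$(L_i|_{X'})_{i\in I}$ is compatible on $\bar X$'' to ``$(L_i)_{i\in I}$ is compatible on $\bar X$'' is Remark \ref{r.triv} (2), i.e.\ the surjectivity of $\pi_1(X'_{(x)},\bar a)\to\pi_1(X_{(x)},\bar a)$ for $x\in\lvert\bar X\rvert$. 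Finally, Theorem \ref{t.main} assumes $I$ finite while the corollary does not; you should record the (easy) reduction to finite subsets of $I$, using that $\iota_{i_0}$ is injective, so the witness $a\in Q$ is independent of the finite subset chosen.
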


In this case, one may also directly adapt the proof of Proposition
\ref{p.NCD} with $\pi_1^t$ replaced by $\pi_1$.

\begin{remark}\label{r.comp}
Every pair of compactifications $Y\subseteq \bar Y_1$ and $Y\subseteq \bar
Y_2$ over $S$ (inclusions of dense open subschemes with $\bar Y_1$ and $\bar
Y_2$ proper over $S$) are dominated by a third one: there exist a
compactification $Y\subseteq \bar Y$ over $S$ and morphisms $\bar Y\to \bar
Y_1$ and $\bar Y\to \bar Y_2$ over $S$ inducing the identity on $Y$. It
suffices to take $\bar Y$ to be the closure of the diagonal embedding
$Y\subseteq \bar Y_1\times_S \bar Y_2$. In the case where $Y$ is normal, we
may even take $\bar Y$ to be normal by normalization.
\end{remark}

It follows that in the situation of Theorem \ref{t.main}, every
compactification $X_\alpha\subseteq \bar X'_\alpha$ is dominated by a normal
compactification $X_\alpha\subseteq \bar X''_\alpha$ such that
$(L_i|_{X_\alpha})_{i\in I}$ is compatible on $\bar X''_{\alpha}$. This
implies the following refinement of Proposition \ref{p.key}, which says that
compatible systems are compatible along the boundary up to modification.

\begin{cor}\label{c.key}
Let $\bar X$ be a reduced scheme separated of finite type over $S$ and let
$X\subseteq \bar X$ be a dense open subscheme. Let $(L_i)\in \prod_{i\in
I}K_\lisse(X,\Qlib)$ be a compatible system with $I$ finite. Then there
exists a proper birational morphism $f\colon \bar X'\to \bar X$ with $\bar
X'$ normal such that $(f_X^*L_i)_{i\in I}$ is compatible on $\bar X'$. Here
$f_X\colon f^{-1}(X)\to X$ is the restriction of $f$.
\end{cor}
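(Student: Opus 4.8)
The plan is to deduce Corollary \ref{c.key} from Theorem \ref{t.main} together with Remark \ref{r.comp}. By Theorem \ref{t.main} applied to $X$ with the given compatible system $(L_i)$, there is a finite stratification $X=\bigcup_\alpha X_\alpha$ by normal subschemes, each admitting a normal compactification $X_\alpha\subseteq \bar X_\alpha$ over $S$ on which $(L_i|_{X_\alpha})_{i\in I}$ is compatible. The key point I would establish first is that one may take these compactifications $\bar X_\alpha$ to \emph{dominate} $\bar X$; that is, there is a morphism $\bar X_\alpha\to \bar X$ over $S$ restricting to the inclusion $X_\alpha\hookrightarrow \bar X$. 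Indeed, $\bar X$ is already proper over $S$ (after replacing $\bar X$ by a compactification of it, or using that it is separated of finite type and embedding it in a proper one — more simply, one first takes a proper $S$-scheme containing $\bar X$, but since the statement only asks for a proper birational $f\colon \bar X'\to \bar X$ one works with $\bar X$ itself once we know $X$ is dense in a proper scheme; alternatively, for each $\alpha$ the closure $\overline{X_\alpha}$ in $\bar X$ together with $\bar X_\alpha$ are two partial compactifications of $X_\alpha$, and by Remark \ref{r.comp} they are dominated by a common normal one $\bar X'_\alpha$, on which $(L_i|_{X_\alpha})_{i\in I}$ is still compatible by Remark \ref{r.triv}(2) or rather by the $G=\{1\}$ case of Definition \ref{d.compb} combined with the surjectivity in \eqref{e.surj}).

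Next I would glue. After the above reduction, for each $\alpha$ we have a normal compactification $\bar X'_\alpha$ of $X_\alpha$ with a proper morphism $g_\alpha\colon \bar X'_\alpha\to \bar X$ that is an isomorphism onto the closure of $X_\alpha$ over $X_\alpha$; in particular $g_\alpha$ is proper birational onto $\overline{X_\alpha}\subseteq\bar X$. Order the strata $X_1,\dots,X_r$ so that $X_1$ is open dense in $\bar X$ (possible after refining the stratification: the open stratum is an integral normal dense open, and the lower strata live in a proper closed subset). Then I would argue by induction on $r$: on the open dense stratum we have the compactification $\bar X'_1\to \bar X$, proper birational with $\bar X'_1$ normal, such that compatibility on $\bar X'_1$ holds over $X_1$. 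The remaining strata are contained in the closed complement $Z=\bar X\setminus X_1$, and on the preimage $g_1^{-1}(Z)$ we may iterate the construction (applying the inductive hypothesis to the reduced closed subscheme $\overline{X_2}\cup\cdots\cup\overline{X_r}$ inside $\bar X'_1$, compatible system the restriction of the pulled-back $(L_i)$), obtaining a proper birational modification over that closed part, and then glue the two modifications along the open locus where they agree.

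**The main obstacle.** The delicate step is verifying that the \emph{glued} modification $\bar X'\to \bar X$ witnesses compatibility on $\bar X'$ at \emph{every} point $x\in\lvert\bar X'\rvert$, not just at points lying over a single stratum. A point $x$ of $\bar X'$ may lie in the closure of several strata, and the local condition in Definition \ref{d.1} involves the Henselization $\bar X'_{(x)}$ and the Weil group $W(f^{-1}(X)_{(x)},\bar a)$, which sees all the generic points of $f^{-1}(X)$ specializing to $x$. The point is that $f^{-1}(X)$ is itself a (disjoint or overlapping, but here disjoint since the $X_\alpha$ partition $X$) union of the $f^{-1}(X_\alpha)$, and for the purposes of compatibility of a lisse sheaf one may work component by component: a representation of $W(f^{-1}(X)_{(x)},\bar a)$ restricted to the Weil group of a connected component, and compatibility of traces on $W(f^{-1}(X)_{(x)},\bar a)$ follows from compatibility on each $W((f^{-1}(X_\alpha))_{(x_\alpha)},\bar a_\alpha)$ for the various points $x_\alpha$ of the closures $\overline{X_\alpha}\subseteq\bar X'$ above (or equal to) $x$, via the functoriality of Weil groups under the morphisms $(f^{-1}(X_\alpha))_{(x_\alpha)}\to f^{-1}(X)_{(x)}$ analogous to Remark \ref{r.closed}. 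Once this component-wise reduction is in place, the construction above produces $\bar X'$ normal (normalizations commute with the gluing along open loci), proper birational over $\bar X$, with the required compatibility, completing the proof.
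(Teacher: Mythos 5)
Your first paragraph is essentially the right start and coincides with the paper's argument: apply Theorem \ref{t.main}, keep a dense open stratum $U$, and use Remark \ref{r.comp} to produce a normal compactification $U\subseteq\bar X'$ dominating both $U\subseteq\bar X$ (after first replacing $\bar X$ by a compactification so that it is proper over $S$) and $U\subseteq\bar U$, so that $(f_X^*L_i)|_U$ is compatible on $\bar X'$. But everything after that --- ordering the strata, inductively modifying over $\overline{X_2}\cup\cdots\cup\overline{X_r}$, and ``gluing'' --- is both unnecessary and does not work. A modification of the proper closed subscheme $\overline{X_2}\cup\cdots\cup\overline{X_r}\subseteq\bar X'_1$ is not dominant onto $\bar X$, so there is no ``open locus where the two modifications agree'' along which to glue; adjoining it to $\bar X'_1$ would either destroy birationality over $\bar X$ or enlarge $f^{-1}(X)$ beyond the actual preimage of $X$. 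Likewise the ``component-wise'' verification at a point $x$ in the closure of several strata is not justified: the lower strata $X_\alpha$, $\alpha\ge 2$, are nowhere dense in $f^{-1}(X)$, they are not connected components of it, and their Henselized Weil groups do not account for the elements of $W\bigl(f^{-1}(X)_{(x)},\bar a\bigr)$ in the way you assert.

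The point you are missing is that none of this is needed, precisely because of Remark \ref{r.triv}(2) (with $G=\{1\}$): since $\bar X'$ is normal, $\bar X'_{(x)}$ is integral for every $x\in\lvert\bar X'\rvert$, hence $U_{(x)}$ is dense open in $f^{-1}(X)_{(x)}$ and $\pi_1(U_{(x)},\bar a)\to\pi_1\bigl(f^{-1}(X)_{(x)},\bar a\bigr)$ is surjective. Therefore compatibility of $(f_X^*L_i)|_U$ on $\bar X'$ already implies compatibility of $(f_X^*L_i)$ on $\bar X'$: the condition of Definition \ref{d.1} at every point of $\bar X'$ is controlled by the single dense stratum, and the lower strata of Theorem \ref{t.main} play no role. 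This closes the proof with no induction and no gluing, and is exactly how the paper concludes.
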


\begin{proof}
Up to replacing $\bar X$ by a compactification, we may assume $\bar X$
proper over $S$. By Theorem \ref{t.main}, there exist a dense open subscheme
$U$ and a normal compactification $U\subseteq \bar U$ such that
$(L_i|_U)_{i\in I}$ is compatible on $\bar U$. Let $U\subseteq \bar X'$ be a
normal compactification dominating $U\subseteq \bar X$ and $U\subseteq \bar
U$ and let $f\colon \bar X'\to \bar X$ be the morphism. Then
$((f_X^*L_i)|_U)_{i\in I}$ is compatible on $\bar X'$. We conclude by Remark
\ref{r.triv} (2).
\end{proof}

\begin{proof}[Proof of Corollary \ref{c.val}]
The ``if'' part of (1) follows from the definition. We prove (2) and the
``only if'' part of (1). We may assume $I$ finite. Up to replacing $X$ by
the closure of the image $\tau\in X$ of $\eta$, we may assume that $X$ is
irreducible of generic point $\tau$. Up to shrinking $X$, we may assume $X$
separated and $L_i\in K_\lisse(X,\Qlib)$ for all $i$. Let $X\subseteq \bar
X$ be a compactification over $S$. We apply Corollary \ref{c.key}. Let
$X'=f^{-1}(X)$. Note that \eqref{e.val} gives rise to a commutative square
\[\xymatrix{\eta\ar@{^{(}->}[d] \ar[r] & \bar X'\ar[d]\\
V\ar@{-->}[ru]^g\ar[r] & S.}
\]
By the valuative criterion of properness, there exists a slashed arrow $g$
as indicated, making the diagram commutative. In case (1), $g$ induces
$\eta\to X'_{(x)}$ and $W(\bar \eta/\eta)\to W(X'_{(x)},\bar \eta)$, where
$x=g(t)$. In case (2), $g$ induces $\eta\to X'_{(g(\bar t))}\colonequals
\bar X'_{(g(\bar t))}\times_{\bar X'}X'$, where $\bar X'_{(g(\bar t))}$
denotes the strict Henselization of $\bar X'$ at the geometric point $g(\bar
t)$, image of the geometric point $\bar t=t$ of $V$ under $g$. The geometric
point $g(\bar t)$ of $\bar X'$ specializes to a geometric point $\bar x$
above $x\in \lvert \bar X'\rvert$. We have $\eta\to X'_{(g(\bar t))}\to
X'_{(\bar x)}\to X'_{(x)}$, which induces $\Gal(\bar \eta/\eta)\to
\pi_1(X'_{(\bar x)},\bar \eta) \subseteq W(X'_{(x)},\bar \eta)$.
\end{proof}

\begin{proof}[Proof of Theorem \ref{t.ind}]
Let us first show (1) and (2b). We write $V_\ell=H^m(X_{\bar L},\Q_\ell)$.
By standard limit arguments, there exists a finitely generated sub-algebra
$R\subseteq L$ over $\F_p$ such that $X$ is defined over $B=\Spec(R)$: there
exists $f\colon \cX \to B$ proper smooth such that $X\simeq \cX\times_B
\eta$, where $\eta=\Spec(L)$. By Grothendieck trace formula, the system $(R
f_*\Qlb)_\ell$ on $B$ is compatible. Each $R^m f_*\Qlb$ is lisse and pure of
weight $m$. It follows that $(R^m f_*\Qlb)_\ell$ is compatible. By base
change, $(R^m f_*\Qlb)_{\bar \eta}\simeq V_\ell\otimes_{\Q_\ell}\Qlb$.
Applying Corollary \ref{c.val} to composition of the commutative square
\[\xymatrix{\eta\ar@{^{(}->}[d] \ar[r] & B\ar[d]\\
\Spec(\cO_L)\ar[r] & \Spec(\F_p)}
\]
and the closed immersion $\Spec(\F_p)\to \Spec(\cO_K)$, where $\cO_K$ is any
Henselian discrete valuation ring of residue field $\F_p$, we see that
$\tr(F,V_\ell)$ is a rational number independent of~$\ell$. In case (1), the
eigenvalues are roots of unity by Grothendieck's geometric local monodromy
theorem \cite[Variante 1.3]{SGA7-1}. In case (2b), the eigenvalues are
algebraic integers by a theorem of Ochiai \cite[Proposition A]{Ochiai}. It
follows that in both cases $\tr(F,V_\ell)$ is a rational integer independent
of $\ell$.

Part (2a) follows from (2b) and the monodromy weight conjecture, which is a
theorem of Terasoma \cite[Lemma 1.2]{Terasoma} and more generally Ito
\cite[Proposition 7.1]{Ito} in equal characteristic. Indeed, $\gr^M_i V $ is
pure of weight $m+i$, so that the characteristic polynomial of $F$ on
$\gr^M_i V$ can be extracted from the characteristic polynomial of $F$ on
$V$.
\end{proof}

The proof of Theorem \ref{t.ind} relies only on the special case of Theorem
\ref{t.main} with $S$ replaced by $\Spec(\F_p)$.

\begin{remark}\label{r.ind}\leavevmode
\begin{enumerate}
\item Ito's proof of the monodromy weight conjecture \cite{Ito} in equal
    characteristic and Grothendieck's proof of the geometric local
    monodromy theorem both use N\'eron's desingularization. For Theorem
    \ref{t.ind}, the reduction to the tame case is more involved and
    N\'eron's desingularization does not suffice.
\item Theorem \ref{t.ind} (2) implies that for each $F\in W^{\ge 0}(\bar
    \kappa/\kappa)$, $\tr(F,H^m(X_{\bar L},\Q_\ell)^I)$ is a rational
    integer independent of $\ell$. Here $I=I(\bar L/L)$ denotes the
    inertia group. The eigenvalues being algebraic integers, it suffices
    to show that the trace is in $\Q$ and independent of $\ell$. Consider
    the primitive parts of $V_\ell=H^m(X_{\bar L},\Q_\ell)$ defined by
    $P_i=\gr^M_{-i}\Ker(N)$ for $i\ge 0$. Here $N\colon V_\ell\to
    V_\ell(-1)$ is the logarithmic of the unipotent part of the local
    monodromy. By the identity $\gr_{-i}^M V_\ell=P_{i}\oplus \gr_{-i-2}^M
    V_\ell(-1)$, $i\ge 0$, the primitive parts $P_i$ and consequently
    $\Ker(N)$ are compatible. Moreover $V_\ell^I=\Ker(N)^I$ and there
    exists an open subgroup $U$ of $I$ acting trivially on $\Ker(N)$, so
    that $\tr(F,V_\ell^I)=\frac{1}{[I:U]}\sum_{F'}\tr(F',\Ker(N))\in \Q$
    is independent of $\ell$. Here $F'$ runs through liftings of $F$ in
    $\Gal(\bar L/L)/U$.
\item Theorem \ref{t.ind} (1) and (2b) hold in fact without the assumption
    that the valuation on $\cO_L$ is discrete. The proof that
    $\tr(F,H^m(X_{\bar L},\Q_\ell))$ is rational and independent of $\ell$
    is the same as above. For integrality, we apply Corollary \ref{c.int}
    below.
\end{enumerate}
\end{remark}

\section{Integrality along the boundary}\label{s.3}

Fix an integrally closed sub-ring $A$ of $\Qlb$. A typical example is the
integral closure of $\Z$ in $\Qlb$. Recall from \cite[Variantes 5.11,
5.13]{Zint} that a $\Qlb$-sheaf $\cF$ on a scheme $X$ of finite type over
$S$ is said to be \emph{integral} if for every $x\in \lvert X\rvert$, the
eigenvalues of $F\in W^{\ge 0}(\bar x/x)$ on $\cF_{\bar x}$ belong to $A$.
In this section, we study the integrality of integral sheaves on the
boundary.

\begin{defn}\label{d.int}
Let $\bar X$ be a normal scheme of finite type over $S$ and let $X$ be a
dense open subscheme. Let $\cF$ be a lisse $\Qlb$-sheaf on $X$. We say that
$\cF$ is \emph{integral on $\bar X$} if for every $x\in \lvert \bar X\rvert$
and every geometric point $\bar a\to X_{(x)}$, the eigenvalues of every
$F\in W^{\ge 0}(X_{(x)},\bar a)$ on $\cF_{\bar a}$ belong to $A$.
\end{defn}

We have the following analogues of Remarks \ref{r.triv} (1) and
\ref{r.closed}. A lisse $\Qlb$-sheaf $\cF$ on $X$ is integral on $X$ if and
only if it is integral. Moreover, in Definition \ref{d.int} we may restrict
to $x$ closed in $\bar X$.

\begin{remark}\label{r.fin}
Let $f\colon \bar X\to \bar Y$ be a finite surjective morphism of integral
normal schemes of finite type over $S$ and let $X\subseteq \bar X$,
$Y\subseteq \bar Y$ be nonempty open subschemes satisfying $f(X)\subseteq
Y$. Let $g\colon X\to Y$ be the restriction of $f$. Then a lisse
$\Qlb$-sheaf $\cF$ on $Y$ is integral on $\bar Y$ if and only if $g^*\cF$ is
integral on $\bar X$.

The ``only if'' part is obvious. For the ``if'' part, up to shrinking $X$
and $Y$ as in Remark \ref{r.triv} (2), we may assume that $g$ is the
composition of a universal homeomorphism with a finite \'etale morphism. In
this case, for every $x\in \bar X$, $\pi_1(X_{(x)},\bar a)$ is an open
subgroup of $\pi_1(Y_{(f(x))},f(\bar a))$, say of index $m$. Then for each
eigenvalue $\lambda$ of $F\in W^{\ge 0}(Y_{(f(x))},f(\bar a))$ acting on
$\cF_{\bar a}$, we have $\lambda^m\in A$ so that $\lambda\in A$.
\end{remark}

We have the following analogue of Theorem \ref{t.main}.

\begin{theorem}\label{t.int}
Let $X$ be a scheme of finite type over $S$ and let $\cF$ be an integral
lisse $\Qlb$-sheaf on $X$. Then there exists a finite stratification
$X=\bigcup_\alpha X_\alpha$ by normal subschemes such that each $X_\alpha$
admits a normal compactification $\bar X_\alpha$ over $S$ such that
$\cF|_{X_\alpha}$ is integral on $\bar X_{\alpha}$.
\end{theorem}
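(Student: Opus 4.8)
The plan is to repeat the proof of Theorem \ref{t.main} with ``compatible'' replaced by ``integral'' throughout. Inspecting that argument, essentially all of the scaffolding carries over unchanged: Gabber's equivariant alterations \cite[Lemme 3.8]{Zind}, Lemma \ref{l.uh}, Noetherian induction, and the formal reductions (the integrality analogues of Remark \ref{r.triv}, together with the descent statement Remark \ref{r.fin}, which is already available). The one step that needs a genuinely new argument is the integrality analogue of Proposition \ref{p.NCD}. I would state it as follows: let $\bar X$ be regular of finite type over $S$ with an action of a finite group $G$, let $D\subseteq\bar X$ be a normal crossing divisor with $X=\bar X-D$ being $G$-stable, and let $\cF$ be an integral lisse $\Qlb$-sheaf on $[X/G]$ which is constant modulo $2\ell$ as a $\Zlib$-sheaf; then $\cF$ is integral on $[\bar X/G]$. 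As in the proof of Proposition \ref{p.key}, being constant modulo $2\ell$ is what one arranges after passing to a suitable connected Galois finite étale cover $Y\to X$, and it implies being constant modulo $\ell^c$ for some rational $c>\tfrac1{\ell-1}$.

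For the proof of this proposition I would follow the local analysis of Proposition \ref{p.NCD} at a point $x\in\lvert D\rvert$, which by the remark after Definition \ref{d.int} we may take to be closed in $\bar X$. The mod-$\ell^c$ hypothesis together with Lemma \ref{l.mono} forces $\cF$ to be tamely ramified along $D$ with the geometric tame inertia $I_t$ acting unipotently on $\cF_{\bar a}$; hence, passing to the semisimplification, $\cF^{\mathrm{ss}}|_{[X_{(x)}/D(x)]}$ factors through $r$ and is the pullback $\bar p^*M$, along $\bar p\colon[\bar X_{(x)}/D(x)]\to\xi$, of a lisse sheaf $M$ on $\xi=[x/D(x)]$. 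Writing $j\colon[X/G]\to[\bar X/G]$ for the open immersion, the preservation of integrality under $Rj_*$ — which holds for schemes by \cite[Variantes 5.11, 5.13]{Zint} and reduces to that case for Deligne–Mumford stacks by smooth base change on an atlas — shows $Rj_*\cF^{\mathrm{ss}}$ is integral; by the projection formula and the absolute-purity computation of the proof of Proposition \ref{p.NCD}, the cohomology sheaf $\mathcal{H}^0(Rj_*\cF^{\mathrm{ss}})$ has stalk $M_{\bar x}$ at a geometric point above $\xi$. Therefore every eigenvalue of every $F\in W^{\ge 0}(\xi,\bar x)$ on $M_{\bar x}$ lies in $A$; and since for $F\in W^{\ge 0}([X_{(x)}/D(x)],\bar a)$ the eigenvalues on $\cF_{\bar a}=\cF^{\mathrm{ss}}_{\bar a}$ are exactly those of $r(F)$ on $M_{\bar x}$, this gives the proposition. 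The assembly into an analogue of Proposition \ref{p.key}, and thence into Theorem \ref{t.int}, is then verbatim: an integral lisse $\cF$ on an integral normal $X$, after a Galois finite étale cover and an equivariant alteration $(Z,H)\to(Y,G)$ with $Z\subseteq\bar Z$ and an $H$-stable $V\subseteq Z$ of normal crossing complement, becomes integral on $[\bar Z/H]$ by the proposition, hence on $\bar Z/H$ over the $H$-free locus; Lemma \ref{l.uh} transports this along the universal homeomorphism $f^{-1}(U)\to U$, Remark \ref{r.fin} handles the intervening finite morphisms, and Noetherian induction finishes.

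It is worth noting that, in contrast with Proposition \ref{p.NCD}, this argument needs neither a further cover nor the rationality lemma \cite[Lemma 8.1]{IllMisc}: the factor $\mathcal{H}^0(Rj_*\Q_\ell)=\Q_\ell$ multiplies each eigenvalue of $M_{\bar x}$ by $1$, so there is no zero to circumvent and every degree — including $0$ — is treated simultaneously. I expect the main obstacle to be precisely the integrality analogue of Proposition \ref{p.NCD}: organising the compatibility between the preservation of integrality under $Rj_*$ in the Deligne–Mumford-stack setting and the absolute-purity computation of $R^mj_*\Q_\ell$, and checking carefully that $\cF^{\mathrm{ss}}$ really does extend across $D$ locally. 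A subsidiary point to keep in mind is that integrality is a condition on the submonoid $W^{\ge 0}$ rather than a group, so the upgrade from $W^{\ge N}$ to $W$ used for compatible systems in Remark \ref{r.rat} is unavailable here — which is why the $\mathcal{H}^0$-argument, rather than the trace-recovery argument of Propositions \ref{p.reg} and \ref{p.NCD}, is the one that has to be used.
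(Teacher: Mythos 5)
Your overall architecture (alteration, Lemma \ref{l.uh}, Noetherian induction, Remark \ref{r.fin}) matches the paper's, and your observation that Remark \ref{r.rat} is unavailable for integrality is exactly right. But the step you substitute for it --- the $\mathcal{H}^0(Rj_*)$ argument --- has a genuine gap. The object $\cF^{\mathrm{ss}}$ is the semisimplification of the \emph{local} restriction $\cF|_{[X_{(x)}/D(x)]}$; it is not the restriction of any global sheaf on $[X/G]$, so you cannot apply $Rj_*$ and the preservation of integrality under higher direct images to it. If you instead apply them to the actual sheaf $\cF$, then $(\mathcal{H}^0(Rj_*\cF))_{\bar x}$ is the space of invariants of $\cF_{\bar a}$ under the local geometric monodromy, i.e.\ $\Ker(N)$ for the logarithms $N$ of the unipotent tame generators --- in general a proper subspace of $\cF_{\bar a}$, carrying only a sub-multiset of the Frobenius eigenvalues of $M_{\bar x}$. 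Already for a rank-$2$ sheaf with one nontrivial Jordan block along a single branch, the eigenvalues of $F$ on $\cF_{\bar a}$ are $\{\alpha, q\alpha\}$ while $\mathcal{H}^0(Rj_*\cF)$ sees only $\{\alpha\}$; the identity $[Rj_*\cF]_\xi=[M]\cdot[Rj_*\Q_\ell]_\xi$ holds only virtually in the Grothendieck group, and integrality of each $R^mj_*\cF$ does not let you read off the eigenvalues of $M$ degree by degree. So your claim that ``every degree, including $0$, is treated simultaneously'' is precisely where the argument breaks.

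The missing idea is to propagate integrality from $\Ker(N)\simeq (j_*\cF)|_D$ (which \emph{is} integral by Theorem \ref{t.intf}) to all of the local monodromy module, and this is done via the monodromy filtration: the primitive parts $P_i=\gr^M_{-i}\Ker(N)$ are integral, and $\gr^M_i\cF[D]\simeq\bigoplus_j P_j(-\tfrac{j+i}{2})$ with non-positive twists, whence $\cF[D]$ is integral. The paper's Proposition \ref{p.intNCD} packages this (together with an induction on the number of branches via Deligne's construction of $\cF[D_J^*]$, and a comparison, using a section of the tame fundamental group sequence, of the eigenvalues of $F$ on $\cF_{\bar a}$ with those of $r(F)$ on $\cF[D_J^*]_{\bar x}$); you need this or an equivalent substitute. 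A secondary, non-fatal difference: the paper states and proves the NCD proposition for schemes only and dispenses with the equivariant/stacky setting entirely, because Remark \ref{r.fin} (integral closedness of $A$: $\lambda^m\in A\Rightarrow\lambda\in A$) lets integrality descend along the finite map $Z\to Z/H$ --- this is exactly the descent that fails for traces and forces the stacks in Proposition \ref{p.NCD}; your stacky variant would additionally require you to justify the stack version of Theorem \ref{t.intf}, which the paper never proves and does not need.
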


The theorem implies, by Remark \ref{r.comp}, that every compactification
$X_\alpha\subseteq \bar X'_\alpha$ is dominated by a normal compactification
$X_\alpha\subseteq\bar X''_\alpha$ such that $\cF|_{X_\alpha}$ is integral
on $\bar X''_\alpha$.

\begin{cor}
Let $\bar X$ be a projective smooth curve over $k$ or $K$ and let
$X\subseteq \bar X$ be a dense open subscheme. Then any integral lisse
$\Qlb$-sheaf $\cF$ on $X$ is integral on $\bar X$.
\end{cor}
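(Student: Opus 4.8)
The plan is to deduce this from Theorem~\ref{t.int} exactly as Corollary~\ref{c.main} was deduced from Theorem~\ref{t.main}, with the extra input being the regular analogue of integrality along the boundary. First I would apply Theorem~\ref{t.int} to $\cF$ on $X$: there is a finite stratification $X=\bigcup_\alpha X_\alpha$ by normal subschemes, each admitting a normal compactification $\bar X_\alpha$ over $S=\Spec(\cO_K)$ (or over the constant field, according to whether we are in the $k$-case or the $K$-case) such that $\cF|_{X_\alpha}$ is integral on $\bar X_\alpha$. Since $\bar X$ is an integral smooth curve, exactly one stratum $X_{\alpha_0}$ is open dense in $X$, and the others are finite sets of closed points; the open stratum $X_{\alpha_0}$ differs from $X$ only by finitely many closed points.

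The key reduction is that on a regular (in fact, here $1$-dimensional smooth) base, integrality of $\cF$ on the open part already forces integrality on the boundary, i.e.\ the integrality analogue of Proposition~\ref{p.reg}. For a closed point $x\in\bar X-X_{\alpha_0}$, the local ring $\bar X_{(x)}$ is a Henselian (indeed a trait), $X_{(x)}$ is the spectrum of its fraction field, and $j\colon X_{(x)}\hookrightarrow \bar X_{(x)}$ is an open immersion of a smooth scheme into a regular scheme of codimension $d=1$. By Theorem~\ref{t.intf} (stability of integral sheaves under the six operations, in particular under $Rj_*$), $Rj_*(\cF|_{X_{(x)}})$ is integral on $\bar X_{(x)}$. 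By Gabber's absolute purity, $R^mj_*\Q_\ell$ is $\Q_\ell$ for $m=0$, $(\Q_\ell)_x(-1)$ for $m=1$, and $0$ otherwise, so by the projection formula the eigenvalues of $F\in W^{\ge N}(x,\bar x)$ of degree $n$ on $\cF_{\bar x}$ and on $(Rj_*\cF)_{\bar x}$ differ only by the extra factor-block coming from $\cF_{\bar x}(-1)$ appearing in degree $1$; since integrality is a statement about $W^{\ge 0}$ and multiplying by $q^n$ with $n\ge 0$ preserves $A$, one extracts that the eigenvalues of $F$ on $\cF_{\bar x}$ lie in $A$. (Equivalently: the characteristic polynomial of $F$ on $\cF_{\bar x}$ divides, up to the Tate twist, that on $(Rj_*\cF)_{\bar x}$, whose roots are in $A$.) This handles integrality of $\cF$ extended across the finitely many missing points, giving integrality of $\cF$ on $\bar X$ directly for the compactification $\bar X$; one then also invokes Remark~\ref{r.fin} or the curve-analogue of Remark~\ref{r.triv}~(2) to pass between the various normal compactifications of $X_{\alpha_0}$ and the given $\bar X$.

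I expect the only mild subtlety to be the bookkeeping across the finitely many closed points of $\bar X$ not in the open stratum and the comparison of compactifications: one must check that integrality of $\cF|_{X_{\alpha_0}}$ on some auxiliary $\bar X_{\alpha_0}$ transfers to integrality of $\cF$ on $\bar X$ itself. For a smooth curve this is immediate because any two projective smooth models of the function field are canonically isomorphic, so in fact $\bar X_{\alpha_0}$ and the closure of $X_{\alpha_0}$ in $\bar X$ agree; the only remaining points are those of $\bar X-X$ together with $X-X_{\alpha_0}$, all closed, at each of which the purity computation above applies verbatim. No genuinely new ideas beyond those already used for Proposition~\ref{p.reg} and Theorem~\ref{t.int} are needed; indeed one may alternatively just remark that, as in the compatibility case, one may directly adapt the proof of the integrality analogue of Proposition~\ref{p.NCD} with $\pi_1^t$ replaced by $\pi_1$, which for curves is short.
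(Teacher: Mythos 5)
Your overall skeleton---apply Theorem~\ref{t.int}, observe that for a curve the normal compactification of the open stratum is the unique smooth projective model, hence is $\bar X$ itself, and then transfer---is exactly how the paper obtains this corollary (it is stated there as an immediate consequence of Theorem~\ref{t.int}, with the alternative of adapting Proposition~\ref{p.intNCD} with $\pi_1^t$ replaced by $\pi_1$, which you also mention). However, the step you single out as ``the key reduction'' is incorrect, and you rely on it at precisely the points where it fails. At a boundary point $x\in\bar X-X$ the sheaf $\cF$ does \emph{not} extend to a lisse sheaf on $\bar X_{(x)}$ (that is the whole content of the statement), so the projection formula $Rj_*(\cF|_{X_{(x)}})\simeq \cF\otimes Rj_*\Q_\ell$ is unavailable: it requires $\cF$ to be defined on the ambient regular scheme, as in Proposition~\ref{p.reg}, where the $L_i$ are lisse on all of $X$ and only their compatibility is assumed on $U$. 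What $(R^mj_*\cF)_{\bar x}$ actually computes is inertia invariants ($m=0$) and twisted coinvariants ($m=1$) of $\cF_{\bar a}$; for a totally ramified $\cF$ both vanish, so the characteristic polynomial of $F$ on $\cF_{\bar a}$ is in no sense recoverable from, or a divisor of, that on $(Rj_*\cF)_{\bar x}$. If your purity argument were valid it would prove integrality along the boundary for an arbitrary integral lisse sheaf with no stratification, alteration, or tameness input, rendering Theorem~\ref{t.int} and Proposition~\ref{p.intNCD} superfluous---a sign that something must be wrong.

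The fix is already contained in your first paragraph: Theorem~\ref{t.int} gives integrality of $\cF|_{X_{\alpha_0}}$ on $\bar X_{\alpha_0}\simeq\bar X$, and for $x\in\bar X-X$ one has $(X_{\alpha_0})_{(x)}=X_{(x)}$ (both are the generic point of the trait $\bar X_{(x)}$), so the required eigenvalue condition at such $x$ is literally part of the conclusion of Theorem~\ref{t.int}; no purity is needed. At the remaining points $x\in X$ (including $X-X_{\alpha_0}$) the local monodromy is unramified, $X_{(x)}=\bar X_{(x)}$, and the condition reduces to the integrality of $\cF$ at $x\in\lvert X\rvert$, which is the hypothesis (or, if one prefers, follows from the surjectivity of $\pi_1((X_{\alpha_0})_{(x)},\bar a)\to\pi_1(X_{(x)},\bar a)$, the integrality analogue of Remark~\ref{r.triv}~(2)). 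With the second paragraph deleted and the boundary points rerouted through Theorem~\ref{t.int} in this way, your argument is the paper's.
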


The case of a curve over a finite field is a theorem of Deligne
\cite[Th\'eor\`eme 1.10.3]{WeilII}.

The proof of Theorem \ref{t.int} relies on the case $m=0$ of the following
theorem \cite[Th\'eor\`eme 2.5, Variantes 5.11, 5.13]{Zint}.

\begin{theorem}\label{t.intf}
Let $f\colon X\to Y$ be  a morphism of schemes of finite type over $S$. Let
$\cF$ be an integral $\Qlb$-sheaf on $X$. Then $R^m f_* \cF$ is integral for
all $m$.
\end{theorem}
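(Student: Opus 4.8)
The statement is that the class of integral objects of $D^b_c(\cdot,\Qlb)$ is stable under $Rf_*$. Since integrality is a condition on the stalks at the points $y\in\lvert Y\rvert$, and since — stratifying $X$ so that $\cF$ is lisse on each stratum and using that integrality is stable under $\otimes$ and under distinguished triangles (the eigenvalues of $F$ on the cohomology of the middle term of a triangle lie among those of the two outer terms) — it is enough to treat $Rf_*$ of an integral lisse sheaf. By Nagata compactification I would factor $f=\bar f\circ j$ with $\bar f$ proper and $j$ an open immersion, so that $Rf_*=R\bar f_*\circ Rj_*$ splits the theorem into two independent assertions: (i) $R\bar f_*$ carries integral complexes to integral complexes, and (ii) for an open immersion $j\colon X\hookrightarrow\bar X$ of normal $S$-schemes of finite type and $\cF$ an integral lisse sheaf, $Rj_*\cF$ is integral on $\bar X$.

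For (i), proper base change gives $(R^m\bar f_*\cG)_{\bar y}\cong H^m(\bar X_{\bar y},\cG|_{\bar X_{\bar y}})$, which reduces the problem to the case where $Y$ is the spectrum of a finite extension $k'$ of $k$, or of a finite extension $K'$ of $K$, and to showing that for $\bar X$ proper over such a field and $\cG$ integral, the eigenvalues of $F$ of non-negative degree on $H^m(\bar X_{\bar y},\cG)$ lie in $A$. By fibration in curves and Noetherian induction this reduces to the case of a proper curve, which over $k'$ is Deligne's theorem (for curves, \cite[Th\'eor\`eme 1.10.3]{WeilII}); over $K'$ one spreads $(\bar X,\cG)$ out over a scheme of finite type over the prime ring — this is where the excellence and Nagata hypotheses on $\cO_K$ are used — reducing, in the equicharacteristic case relevant to Theorem \ref{t.ind}, to curves over finite fields.

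For (ii) I would run the reductions from the proof of Theorem \ref{t.main}. By Noetherian induction it suffices to check the integrality of $(R^mj_*\cF)_{\bar a}$ at the closed points $x$ lying over a dense open part of $\bar X$; passing to a connected finite \'etale Galois cover $Y\to X$ over which the reduction of $\cF$ modulo a suitable power of $\ell$ is constant, applying Gabber's equivariant alterations \cite[Lemme 3.8]{Zind}, and descending integrality through the cover and the resulting universal homeomorphisms by Remark \ref{r.fin}, I reduce to the case that $\bar X$ is regular, $D=\bar X-X$ is a normal crossing divisor, and the tame inertia along $D$ acts unipotently on $\cF$ (Lemma \ref{l.mono}). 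In that case $(R^mj_*\cF)_{\bar a}$ is computed locally near $D$ by a Koszul-type complex on the logarithms of the tame monodromies with Tate twists — the same absolute purity input as in the proof of Proposition \ref{p.NCD}, giving $(R^mj_*\Qlb)_{\bar x}\cong\Qlb(-m)^{\binom{d}{m}}$ — so that $(R^mj_*\cF)_{\bar a}$ is a subquotient of $\bigoplus_{p\ge0}\cF_{\bar a}(-p)$, and the eigenvalues of $F$ of degree $n\ge0$ on it have the form $\mu q^{pn}$ with $p\ge0$ and $\mu$ an eigenvalue of $F$ on $\cF_{\bar a}$.

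Everything then comes down to showing that the eigenvalues of $F\in W^{\ge0}(X_{(x)},\bar a)$ on $\cF_{\bar a}$ lie in $A$ — i.e., that $\cF$ is integral on $\bar X$ in this nice case — and this is the main obstacle, since $\cF$ is only assumed integral at the points of $\lvert X\rvert$, whereas $x$ lies on the boundary. I would settle it by slicing: choose a curve $C\subseteq\bar X$ through $x$ which is regular at $x$ and transverse there to $D$ (a Bertini-type construction, allowing a finite extension of $\kappa(x)$ if necessary); then $\cF|_{C\cap X}$ is an integral lisse sheaf on the curve $C\cap X$, and, since the semisimplification of $\cF_{\bar a}$ as a $\pi_1^t$-representation factors through $r$ (as in the proof of Proposition \ref{p.NCD}), the characteristic polynomial of $F$ on $\cF_{\bar a}$ depends only on its degree and can be computed at the point $x$ of the compactification of $C$; by Deligne's theorem for curves over finite fields \cite[Th\'eor\`eme 1.10.3]{WeilII} — with one more spreading-out step when $C$ lies over $K$ — the eigenvalues of some lift of positive-degree local Frobenius at $x$ on $\cF_{\bar a}$ lie in $A$, and since $A$ is integrally closed in $\Qlb$ the same then holds for all of $W^{\ge0}(X_{(x)},\bar a)$. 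The other delicate point is that the successive alterations do not commute with $Rj_*$; as in the proof of Proposition \ref{p.NCD}, this forces one to pass to tame covers and normalizations rather than to alter $\bar X$ directly, which is exactly why the formalism of quotient stacks and Remark \ref{r.fin} is needed.
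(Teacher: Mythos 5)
The paper does not actually prove Theorem \ref{t.intf}: it is imported wholesale from \cite[Th\'eor\`eme 2.5, Variantes 5.11, 5.13]{Zint}, so there is no internal proof to compare against. Your outline does have the right general shape --- d\'evissage to lisse sheaves, Nagata factorization into a proper map and an open immersion, proper base change for the proper part, and reduction of the open-immersion part to a normal crossings situation with unipotent tame monodromy, where everything hinges on integrality of the eigenvalues of Frobenius lifts at boundary points --- and this is broadly how the argument of \cite{Zint} is organized. Note, though, that in the present paper the implication runs the other way at the key point: Proposition \ref{p.intNCD} \emph{deduces} boundary integrality (via $\Ker(N)=(j_*\cF)|_D$) from Theorem \ref{t.intf}, whereas your proof must produce it from scratch.

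The genuine gap is your treatment of points lying over $K$. Every appeal you make to Deligne is to \cite[Th\'eor\`eme 1.10.3]{WeilII}, which concerns curves over \emph{finite} fields, and your fallback is ``spreading out''; but a closed point of $\bar X_K$ has residue field a finite extension of the local field $K$, and in mixed characteristic there is nothing over a finite field to spread out to (and even in equal characteristic, spreading $K=k((t))$ out over a curve turns a curve over $K$ into a surface over $k$, so the induction does not close up on the curve case). Concretely: in step (i) the fibres of $\bar f$ over $\lvert Y\rvert\cap Y_K$ are proper varieties over local fields, whose integrality is itself a nontrivial theorem (this is the hard content of \cite{Zint} and of the Deligne--Esnault appendix for $Rf_!$, proved via alterations and weight arguments, not by spreading out); and in step (ii), if for instance $X=\bar X_K$, then every boundary point lies in $\bar X_k$ and any slice through it contained in a fibre misses $X$ entirely, so the slice must be a trait $\Spec(\cO_{K'})$ rather than a curve over a field --- i.e.\ the ``curve case'' you invoke is exactly the local-field case you are trying to prove. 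Two secondary issues: the existence of a transverse slice through a prescribed closed point over a finite residue field needs a Bertini theorem over finite fields, and the d\'evissage from constructible to lisse needs more care than ``distinguished triangles,'' since $Rf_*$ does not commute with $j_!$; one needs a simultaneous Noetherian induction. As written, the argument only covers the part of the statement living over the closed point of $S$.
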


The analogue for $R^mf_!$ was proved by Deligne and Esnault (\cite[XXI
Th\'eor\`eme 5.2.2]{SGA7-2}, \cite[Appendix, Theorem 0.2]{DE}). We refer to
\cite{Zint} for a description of the behavior of integral sheaves under
other operations.

\begin{cor}
Let $X$ be a scheme of finite type over $S$ and let $\cF$ be a lisse
$\Qlb$-sheaf on $X$. Assume that $\cF|_U$ is integral for some dense open
subscheme $U\subseteq X$. Then $\cF$ is integral.
\end{cor}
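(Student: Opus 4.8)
The plan is to reduce to the case where $X$ is integral and normal, and then to identify $\cF$ with $R^0 j_*(\cF|_U)$ for $j\colon U\to X$ the open immersion, whose integrality is exactly the case $m=0$ of Theorem~\ref{t.intf}.

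Since the eigenvalue conditions of Definition~\ref{d.int} (in the case $\bar X=X$) only involve stalks of $\cF$ and Weil groups of points of $X$, and are unaffected by passing to $X_{\mathrm{red}}$, I would first reduce to $X$ reduced; then, as each point of $X$ lies on an irreducible component of $X$ which $U$ meets in a dense open, I would treat the components separately and assume $X$ integral. Next let $\nu\colon X'\to X$ be the normalization, a finite surjective morphism with $X'$ integral and normal, which is an isomorphism over the normal locus of $X$ — a dense open, as $X$ is excellent with normal generic point. Shrinking $U$ into this locus keeps $\cF|_U$ integral, and I claim it then suffices to prove $\nu^*\cF$ is integral on $X'$. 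Indeed, given $x\in\lvert X\rvert$, a point $x'$ of $X'$ above it lies in $\lvert X'\rvert$ (its residue field is finite over that of $x$, hence over $k$ or $K$), a geometric point $\bar x$ above $x'$ gives $(\nu^*\cF)_{\bar x}=\cF_{\bar x}$, and $W(\bar x/x')\subseteq W(\bar x/x)$ has finite index, so for $F\in W^{\ge 0}(\bar x/x)$ some positive power $F^N$ lies in $W^{\ge 0}(\bar x/x')$; the eigenvalues of $F^N$ on $(\nu^*\cF)_{\bar x}$ — the $N$-th powers of those of $F$ — then lie in $A$, whence so do the eigenvalues of $F$ since $A$ is integrally closed. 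Thus we may assume $X$ integral and normal.

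Now, with $X$ normal and $U$ a dense open subscheme, the adjunction morphism $\cF\to j_*j^*\cF=R^0 j_*(\cF|_U)$ is an isomorphism. I would check this on stalks at a geometric point $\bar\xi$ of $X$: the strict henselization $X_{(\bar\xi)}$ is a normal strictly local ring, in particular an irreducible domain, so $U\times_X X_{(\bar\xi)}$ is a dense connected open of it; as $\cF$ is lisse it restricts to the constant sheaf $\cF_{\bar\xi}$ on $X_{(\bar\xi)}$, whence $(R^0 j_*(\cF|_U))_{\bar\xi}=\Gamma(U\times_X X_{(\bar\xi)},\cF_{\bar\xi})=\cF_{\bar\xi}$ and the adjunction map is the identity. (For $\Qlb$-coefficients one runs this with $\Z/\ell^n$-coefficients and passes to the limit.) Finally, Theorem~\ref{t.intf} applied to the morphism $j$ and the integral sheaf $\cF|_U$ shows that $R^0 j_*(\cF|_U)$, hence $\cF$, is integral.

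The only point requiring a little care is the Weil-group bookkeeping in the reduction along $\nu$ (and, relatedly, why the reduction to the normal case cannot be skipped: for non-geometrically-unibranch $X$ one has $\cF\neq j_*j^*\cF$ in general). Everything else is a direct application of Theorem~\ref{t.intf} together with the standard fact that a lisse sheaf on a normal scheme is recovered by $j_*$ from its restriction to a dense open. In contrast to Proposition~\ref{p.reg}, no absolute purity input is needed here, since only $R^0 j_*$ intervenes.
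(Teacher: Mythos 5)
Your proof is correct and follows the same route as the paper: reduce to $X$ normal via the normalization (the descent of integrality along $\nu$ being exactly the argument of Remark~\ref{r.fin}), identify $\cF$ with $j_*(\cF|_U)$, and invoke the case $m=0$ of Theorem~\ref{t.intf}. The paper states this in one line; you have merely supplied the standard details.
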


In the case $X$ of finite type over $k$, this was noted in \cite[Proposition
2.4]{comp}.

\begin{proof}
Up to replacing $X$ by its normalization, we may assume $X$ normal. Let
$j\colon U\to X$ be the open immersion. Then $\cF\simeq j_*(\cF|_U)$ in
integral by Theorem \ref{t.intf}.
\end{proof}

\begin{prop}\label{p.intNCD}
Let $\bar X$ be a regular scheme of finite type over $S$ and let $D$ be a
normal crossing divisor. Let $\cF$ be an integral lisse $\Qlb$-sheaf on
$X=\bar X-D$, tamely ramified on $\bar X$. Then $\cF$ is integral on~$\bar
X$. Moreover, $R^m j_* \cF(m)$ is integral for all $m$, where $j\colon X\to
\bar X$ is the open immersion.
\end{prop}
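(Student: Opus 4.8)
The plan is to mirror the structure of the proof of Proposition~\ref{p.NCD}, replacing the finite group action by the trivial one and "compatibility" by "integrality," and using Theorem~\ref{t.intf} (the case $m=0$) in place of Theorem~\ref{t.six}. First I would note that by \'etale descent along $R^m j_*$ and the fact that we may check integrality on a strict Henselization, it suffices to treat a single generic point $\bar x$ of an irreducible component of $D$: one inducts on the number of components of $D$ meeting a given point, so that locally $D$ has $d$ branches cut out by coordinates $t_1,\dots,t_d$ on the strict Henselization $\bar X_{(\bar x)}$. As in Proposition~\ref{p.NCD}, tameness and Abhyankar's lemma give a short exact sequence
\[
1\to I_t\to \pi_1^t(X_{(x)},\bar a)\xrightarrow{r}\pi_1(x,\bar x)\to 1,\qquad I_t=\prod_{\ell'\neq \mathrm{char}(x)}\Z_{\ell'}(1)^d,
\]
and after replacing $\bar X$ by a suitable quasi-finite cover $\bar Y$ obtained by extracting $N$-th roots of the $t_i$ (with $N$ invertible on $x$), we may assume that $I_t$ acts unipotently on $\cF_{\bar a}$. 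By Remark~\ref{r.fin} this reduction is harmless for integrality on the boundary. Then $\cF|_{X_{(x)}}$ has semisimplification factoring through $r$, so $L\colonequals[\cF|_{X_{(x)}}]$ is the pullback via $r$ of some $M\in K(x,\Qlb)$.

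Next I would run the purity computation exactly as in Proposition~\ref{p.NCD}: by the projection formula $M\otimes_{\Qlb}(Rj_*\Qlb)_{\bar x}\simeq (Rj_* \cF)_{\bar x}$, and Gabber's absolute purity gives $(R^m j_*\Q_\ell)_{\bar x}\simeq \Q_\ell(-m)^{\binom{d}{m}}$ for $0\le m\le d$ and $0$ otherwise, so that for $F\in W^{\ge 0}(x,\bar x)$ of degree $n\ge 1$ one has $\tr(F,(Rj_*\Q_\ell)_{\bar x})=(1-q^n)^d$. Since $(Rj_*\cF)_{\bar x}$ is integral by Theorem~\ref{t.intf} (case $m=0$ applied fiberwise, or directly to $j$), the eigenvalues of $F$ on each $(R^mj_*\cF)_{\bar x}=M\otimes\Q_\ell(-m)^{\binom{d}{m}}$ lie in $A$; looking at $m=0$ we get that the eigenvalues of $F$ on $M_{\bar x}=\cF_{\bar a}$ lie in $A$, i.e.\ $\cF$ is integral on $\bar X$. (One has to be a little careful to extract the statement about $M$ from that about $M\otimes(Rj_*\Qlb)$ when $n=0$; here the degree-zero case is recovered from Grothendieck's arithmetic local monodromy theorem together with the rationality-type argument of Remark~\ref{r.rat}, exactly as in the compatibility setting — the point being that $\cF$ is already quasi-unipotent on inertia, so its inertia eigenvalues are roots of unity, hence in $A$ automatically, and only the Frobenius-twisted eigenvalues need the purity computation.) This simultaneously yields the "Moreover" clause, since $R^mj_*\cF(m)$ has fiber $M\otimes\Q_\ell^{\binom{d}{m}}$ at $\bar x$, whose eigenvalues are those of $F$ on $M_{\bar x}$, hence in $A$; away from $D$ it equals $\cF$ for $m=0$ and $0$ for $m>0$, and the generic-point reduction handles non-generic points of $D$ by the inductive cover of $D$ by schemes finite over $S$ as in Remark~\ref{r.closed}.

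The main obstacle I anticipate is the bookkeeping around the degree-zero Frobenius elements: unlike the alternating-sum or trace-compatibility statements, integrality is a statement about individual eigenvalues, so I cannot simply invoke Remark~\ref{r.rat} verbatim. The resolution is that $\cF$ tamely ramified with $I_t$ unipotent means the inertia already acts through a unipotent (after the cover) group, so all inertia eigenvalues equal $1\in A$; for an element $F$ of degree $0$ its eigenvalues on $\cF_{\bar a}$ are roots of unity by Grothendieck's arithmetic local monodromy theorem \cite[Appendix]{ST} (applied after passing to an open subgroup where the representation is unipotent, cf.\ Lemma~\ref{l.mono}), hence lie in $A$ since $A$ is integrally closed and contains $\Z$. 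Thus only the degree $n\ge 1$ case genuinely requires the $Rj_*$ argument, and there the purity computation delivers the eigenvalues of $F$ on $M_{\bar x}$ cleanly. A second, minor point is checking that extracting $N$-th roots of the $t_i$ keeps $\bar Y$ regular with normal crossing preimage of $D$ and that the resulting finite surjective $\bar Y\to\bar X$ lets us apply Remark~\ref{r.fin} — this is routine and already spelled out in Proposition~\ref{p.NCD}.
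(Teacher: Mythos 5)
There is a genuine gap at the heart of your argument: the $Rj_*$--purity--division mechanism of Proposition \ref{p.NCD} does not transport to integrality. The identification $(R^mj_*\cF)_{\bar x}\simeq M\otimes\Q_\ell(-m)^{\binom{d}{m}}$ on which you rely is false unless the local monodromy is actually trivial: after the Kummer cover the inertia acts unipotently but not trivially, and already for $d=1$, with $N$ the logarithm of the monodromy, one has $(j_*\cF)|_D=\Ker(N)$ and $(R^1j_*\cF)|_D=\mathrm{coker}(N)(-1)$. So ``looking at $m=0$'' only yields integrality of the inertia invariants $\Ker(N)\subseteq\cF_{\bar a}$, not of $\cF_{\bar a}$ itself. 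The projection formula $M\otimes(Rj_*\Qlb)_{\bar x}\simeq(Rj_*\cF)_{\bar x}$ holds only in the Grothendieck group; that suffices for traces, where one divides by $(1-q^n)^d$, but integrality of eigenvalues is not a linear condition and does not survive the cancellations implicit in a virtual identity such as $[V]-[W]=[M]\cdot(1-[\Q_\ell(-1)])^d$ with $V,W$ integral. This is precisely why the paper's proof uses a different mechanism: it constructs the sheaves $\cF[D_J^*]$ on the strata via the covers $\bar X[t_i^{1/n}]$ (Weil II, 1.7.9), identifies $\Ker(N)\simeq(j_*\cF)|_D$ as integral by Theorem \ref{t.intf}, and then propagates integrality from $\Ker(N)$ to all of $\cF[D]$ through the monodromy filtration and the primitive decomposition $\gr^M_i\cF[D]\simeq\bigoplus_jP_j(-\tfrac{j+i}{2})$, whose twists (with $\tfrac{j+i}{2}\ge 0$) only multiply eigenvalues by non-negative powers of $q$. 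Without this step, or a substitute for it, your argument proves integrality of the invariants only.

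A secondary problem is your reduction to generic points of components of $D$. Integrality on $\bar X$ is a condition at the locally closed points $x\in\lvert\bar X\rvert$, which for $x\in D$ are closed points of the fibers; Remark \ref{r.closed} reduces the verification \emph{to} closed points, not to generic ones, and integrality at the generic point of a component of $D$ does not imply it at the closed points lying on it. The paper instead establishes integrality of $\cF[D_J^*]$ as a sheaf on the whole stratum and, at each $x\in\lvert D_J^*\rvert$, uses a section $s$ of $r$ (coming from a compatible choice of roots of the $t_i$) to match eigenvalues on $\cF_{\bar a}$ with eigenvalues on $\cF[D_J^*]_{\bar x}$ over the open subgroup $V\cdot\Img(s)$, concluding by the $\lambda^m\in A\Rightarrow\lambda\in A$ argument. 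Your treatment of degree-zero elements via quasi-unipotence is fine as far as it goes, but it addresses a side issue rather than the main one above; the same false fiber identification also undermines your proof of the ``Moreover'' clause, for which the paper computes $R^mj_*\cF|_{D_J^*}=H^m(I_L^J,\cF[D_J^*])$ and uses that invariants and twisted coinvariants preserve integrality.
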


We will only need the first assertion. Some cases of the second assertion
were proved in \cite[Proposition 3.8, Variantes 5.11, 5.13]{Zind}.

\begin{proof}
We may assume that $D=\sum_{i\in I} D_i$ is a strict normal crossing divisor
with $D_i$ regular and defined globally by $t_i=0$, and $\cF$ is
$L$-ramified, where $L$ is the set of prime numbers invertible on $\bar X$.
We apply the construction of \cite[1.7.9]{WeilII}. For $J\subseteq I$, let
$D_J^*=\bigcap_{j\in J}D_j\cap \bigcap_{i\in J-I}(\bar X-D_i)$. For each
locally constant constructible sheaf of sets $\cG$ on $X$, $L$-ramified on
$\bar X$, there exists an integer $n$ invertible on $\bar X$ such that $\cG$
extends to $\cG'$ on the cover $\bar X[t_i^{1/n}]_{i\in I}$ of $\bar X$, and
we let $\cG[D_J^*]$ denote the restriction of $\cG'$ to $D_J^*$, which is
locally constant constructible. The action of $\mu_n^I$ on $\bar
X[t_i^{1/n}]_{i\in I}$ induces an action of $\mu_n^J$ on $\cG[D_J^*]$ (as
$\mu_n^J$ acts trivially on $D_J^*$). Extending this construction to
$\Qlb$-sheaves by taking limits, we obtain a lisse $\Qlb$-sheaf $\cF[D^*_J]$
on $D_J^*$ equipped with an action of $I_L^J$, where $I_L=\hat\Z_L(1)$.

Let us show that $\cF[D^*_J]$ is integral. For $J\subseteq J'\subseteq I$,
$\cF[D^*_{J'}]=\cF[D^*_J][D^*_{J'}]$. Thus by induction we may assume
$\#J=1$. Changing notation, it suffices to show that $\cF[D]$ is integral
for $D$ a regular divisor defined by $t=0$. By Grothendieck's arithmetic
local monodromy theorem applied to the Henselization of $\bar X$ at the
generic point of $D$, the action of $I_L=\hat\Z_L(1)$ on $\cF[D]$ is
quasi-unipotent. Up to replacing $\bar X$ by $\bar X[t^{1/n}]$, we may
assume that the action of $I_L$ on $\cF[D]$ is unipotent. Let $N\colon
\cF[D]\to \cF[D](-1)$ be the logarithm of the action of $I_L$ and let $M$ be
the local monodromy filtration on $\cF[D]$. Then $\Ker(N)=\cF[D]^{I_L}
\simeq (j_*\cF)|_D$, which is integral by Theorem \ref{t.intf}. Thus the
primitive parts $P_i=\gr^M_{-i} \Ker(N)$ are integral. It follows that
$\gr^M_i\cF[D]\simeq \bigoplus_{j} P_j(-\frac{j+i}{2})$ is integral. Here
$j$ runs through integers $j\ge \lvert i\rvert$ satisfying $j\equiv i\pmod
2$. Therefore $\cF[D]$ is integral.

Let $x\in D_J^*$. We have an exact sequence
\begin{equation}\label{e.exact}
1\to I_t\to \pi_1^t(X_{(x)},\bar a)\xrightarrow{r} \Gal(\bar x/x)\to 1.
\end{equation}
By Lemma \ref{l.mono}, there exists an open subgroup $V$ of $I_t$ acting
unipotently on $\cF_{\bar a}$. Assume $x\in \lvert D_J^*\rvert$ and let
$\cF'$ denote the semisimplification of $\cF|_{X_{(x)}}$. Then $V$ acts
trivially on $\cF'_{\bar a}$. The choice of a geometric point of $\lim_n
X_{(x)}[t_i^{1/n}]_{i\in I}$ above $\bar a$ gives a section $s$ of $r$ and
$\cF[D_J^*]_{x}$ corresponds to the action of $\Gal(\bar x/x)$ on $\cF_{\bar
a}$ via $s$. Then $U=V\cdot\Img(s)$ is an open subgroup of
$\pi_1^t(X_{(x)},\bar a)$. For $F\in U$, the eigenvalues of $F$ acting on
$\cF_{\bar a}$ are the same as the eigenvalues of $r(F)$ acting on
$\cF[D_J^*]_{\bar x}$, which belong to $A$ if $F\in W^{\ge 0}$. It follows
that $\cF$ is integral on $\bar X$.

For the second assertion of the proposition, note that the restriction of
$R^mj_*\cF$ to $D_J^*$ is $H^m(I_L^J,\cF[D_J^*])$. Since taking invariants
$H^0(I_L,-)=(-)^{I_L}$ and coinvariants $H^1(I_L,-)(1)\simeq (-)_{I_L}$
preserve integral sheaves, the same holds for $H^m(I_L^J,-)(m)\simeq
\bigoplus_{K} (-)^{I_L^{J-K}}_{I_L^K}$, where $K\subseteq J$ runs through
subsets of cardinality $m$.
\end{proof}

The rest of the proof of Theorem \ref{t.int} is similar to that of Theorem
\ref{t.main}. We proceed by Noetherian induction and reduce by Lemma
\ref{l.uh} to proving the following.

\begin{prop}
Let $X$ be an integral normal scheme separated of finite type over $S$ and
let $\cF$ be an integral lisse $\Qlb$-sheaf on $X$. Then there exist a
proper morphism $f\colon X'\to X$ with $X'$ connected normal inducing a
universal homeomorphism $f^{-1}(U)\to U$ for some nonempty open $U\subseteq
X$, and a normal compactification $X'\subseteq \bar X'$ over $S$ such that
$f^*\cF$ is integral on $\bar X'$.
\end{prop}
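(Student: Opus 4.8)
The plan is to mimic the proof of Proposition~\ref{p.key} almost verbatim, replacing compatible systems by a single integral sheaf and Proposition~\ref{p.NCD} by Proposition~\ref{p.intNCD}. First I would reduce the wild ramification of $\cF$ to the tame case: there exists a connected finite \'etale Galois cover $Y\to X$ of group $G$ such that the pullback of $\cF$ to $Y$ becomes tamely ramified along any normal crossing compactification of $Y$ (concretely, choosing $Y$ so that the monodromy representation of $\cF$ restricted to $\pi_1(Y)$ lands in $1+\ell^c M_n(\Zlb)$ for a suitable $c>\frac{1}{\ell-1}$, using the open subgroup $\Unip_n(\Qlb)$ of $\QU_n(\Qlb)$; since $\cF$ is only a $\Qlb$-sheaf and not assumed $\ell$-adically integral, I would first pick a $\Zlb$-lattice stable under the monodromy, which exists after passing to a finite \'etale cover).

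Next I would run Gabber's equivariant alterations. Let $S_0$ be $S$ if $X_K\neq\emptyset$ and the closed point of $S$ otherwise, and let $T$ be the normalization of $S_0$ in $Y$. Applying \cite[Lemme 3.8]{Zind} to the $G$-equivariant morphism $Y\to T$ yields a Galois alteration $(Z,H)\to(Y,G)$, an $H$-equivariant open immersion $Z\subseteq\bar Z$ with $\bar Z$ regular and projective over $S$, and an $H$-stable open $V\subseteq Z$ whose complement in $\bar Z$ is a normal crossing divisor; moreover there is an $H$-stable affine open $V_0\subseteq V$ on which $H$ acts freely and a nonempty open $U\subseteq X$ with $f\colon X'\colonequals Z/H\to Y/G\simeq X$ inducing a universal homeomorphism $f^{-1}(U)\to U$. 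The pullback $\cG$ of $\cF$ to $V$ (a $\Qlb$-sheaf on the scheme $V$, ignoring the $H$-action) is integral by Remark~\ref{r.fin}, and is tamely ramified on $\bar Z$ by the construction of $Y$. Hence Proposition~\ref{p.intNCD} applies to give that $\cG$ is integral on $\bar Z$.

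Now I need to descend from $\bar Z$ to $\bar X'\colonequals \bar Z/H$. On $V_0$, where $H$ acts freely, one has $[V_{0,(z)}/D(z)]\simeq (V_0/H)_{(w)}$ for $w$ the image of $z$, so integrality of $\cG$ on $\bar Z$ along $V_0$ transfers directly to integrality of the descended sheaf $f^*\cF|_{V_0/H}$ on $\bar X'$ at all points of $\bar X'$; here I would invoke the equivariant analogues of Definition~\ref{d.int} and Remark~\ref{r.fin} in the free-quotient case, exactly as Remark~\ref{r.triv}~(3) is used in the proof of Proposition~\ref{p.key}, together with the observation that restricting to a dense open (namely $V_0/H\subseteq X'$) does not change integrality on the compactification (the $\pi_1$-surjectivity of Remark~\ref{r.triv}~(2)). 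This gives the proposition with $U$ replaced by $V_0/H$, which is a nonempty open of $X$ over which $f$ is still a universal homeomorphism.

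The main obstacle, as in the compatible-systems case, is the bookkeeping around Gabber's equivariant alterations and the passage between the stacky compatibility/integrality on $[\bar Z/H]$ and the honest scheme-theoretic statement on $\bar Z/H$; everything hinges on getting $H$ to act freely on a dense open and on checking that Remark~\ref{r.fin} (finite surjective morphisms preserve integrality on compactifications in both directions) survives in the required equivariant form. A secondary point to handle carefully is that $\cF$ is an arbitrary integral lisse $\Qlb$-sheaf, not given with an integral structure, so the first reduction must produce a monodromy-stable $\Zlb$-lattice after a finite \'etale base change before one can speak of the mod-$\ell_i^c$ reduction being constant; this is routine but must be stated.
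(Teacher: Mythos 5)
Your proposal is correct and follows the paper's proof essentially verbatim: reduce to tame ramification by a finite \'etale Galois cover, apply Gabber's equivariant alterations exactly as in the second paragraph of the proof of Proposition \ref{p.key}, invoke Proposition \ref{p.intNCD} on $\bar Z$, and descend to $\bar Z/H$. The only (harmless) deviations are that the paper performs the descent by applying the non-equivariant Remark \ref{r.fin} to the finite surjective morphism $\bar Z\to \bar Z/H$ of normal schemes, so none of the stacky or free-action bookkeeping you flag as the main obstacle is actually needed, and that a monodromy-stable $\Zlb$-lattice for $\cF$ exists already on $X$ by compactness of $\pi_1$, so no preliminary cover is required for that step.
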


\begin{proof}
The proof is similar to that of Proposition \ref{p.key}, except that here we
do not need to work with stacks. We write
$\cF=(\cF_0)\otimes_{\overline{\Z_\ell}}\overline{\Q_\ell}$. There exists a
finite \'etale cover $Y\to X$, Galois of group $G$, such that
$\cF_0\otimes_{\overline{\Z_\ell}}
\overline{\Z_\ell}/\ell\overline{\Z_\ell}$ is constant. We apply the second
paragraph of the proof of Proposition \ref{p.key}. Since $\cF|_V$ is tamely
ramified on~$\bar Z$, $\cF|_V$ is integral on $\bar Z$ by Proposition
\ref{p.intNCD}. Thus, by Remark \ref{r.fin}, $f^*\cF$ is integral on $\bar
X'$, and the proposition follows.
\end{proof}

The same proof, with Proposition \ref{p.intNCD} replaced by Lemma
\ref{l.mono} applied to \eqref{e.exact}, yields the following result on
quasi-unipotence.

\begin{theorem}\label{t.unip}
Let $X$ be a scheme of finite type over $S$ and let $\cF$ be a lisse
$\Qlb$-sheaf on $X$. Then there exists a finite stratification
$X=\bigcup_\alpha X_\alpha$ by normal subschemes such that each $X_\alpha$
admits a normal compactification $\bar X_\alpha$ over $S$ such that for
every geometric point $\bar x \to \bar X$, and every geometric point $\bar
a\to (X_\alpha)_{(\bar x)}\colonequals (\bar X_\alpha)_{(\bar
x)}\times_{\bar X_\alpha} X_\alpha$, the action of $\pi_1((X_\alpha)_{(\bar
x)},\bar a)$ on $\cF_{\bar a}$ is quasi-unipotent. Here $(\bar
X_\alpha)_{(\bar x)}$ denotes the strict Henselization.
\end{theorem}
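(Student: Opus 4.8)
The plan is to run the proof of Theorem \ref{t.int} almost verbatim, with quasi-unipotence of the local monodromy replacing integrality along the boundary, and with Proposition \ref{p.intNCD} replaced by a direct application of Lemma \ref{l.mono} to the exact sequence \eqref{e.exact}. As in the proofs of Theorems \ref{t.main} and \ref{t.int}, we may assume $X$ reduced, and by Noetherian induction it suffices to produce an integral normal open subscheme $X_0\subseteq X$ with a normal compactification $X_0\subseteq\bar X_0$ over $S$ on which $\cF|_{X_0}$ has quasi-unipotent local monodromy (in the sense of the theorem), then recurse on the closed complement. So assume $X$ integral normal separated over $S$. Writing $\cF=\cF_0\otimes_{\Zlb}\Qlb$, choose a connected finite étale Galois cover $Y\to X$ of group $G$ with $\cF_0\otimes_{\Zlb}\Zlb/\ell\Zlb$ constant; then the monodromy of $\cF|_Y$ takes values in a pro-$\ell$ group, hence annihilates the (pro-$p$) wild inertia, so $\cF|_Y$ is tamely ramified. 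Now apply Gabber's equivariant alterations \cite[Lemme 3.8]{Zind}, exactly as in the proof of Proposition \ref{p.key}, to obtain a Galois alteration $(Z,H)\to(Y,G)$, an $H$-equivariant open immersion $Z\subseteq\bar Z$ with $\bar Z$ regular and projective over $S$, an $H$-stable open $V\subseteq Z$ whose complement in $\bar Z$ is a normal crossing divisor (strict after shrinking $\bar Z$), a nonempty $H$-stable affine open $V_0\subseteq V$ on which $H$ acts freely, and a nonempty open $U\subseteq X$ over which $f\colon X'\colonequals Z/H\to Y/G\simeq X$ induces a universal homeomorphism; here $\cF|_V$ is still tamely ramified on $\bar Z$.

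The one genuinely new point is the normal crossing case, the analogue of Proposition \ref{p.intNCD}: if $\bar Z$ is regular of finite type over $S$, $D=\sum_i D_i$ is a strict normal crossing divisor, and a lisse $\Qlb$-sheaf on $\bar Z-D$ is tamely ramified on $\bar Z$, then its local monodromy at every geometric point of $\bar Z$ is quasi-unipotent. There is nothing to prove if the geometric point $\bar x$ is not over $D$. If $\bar x$ lies over a locally closed point $x$ of a stratum $D_J^*$ with $r=\#J$, then the tame local fundamental group of $(\bar Z-D)_{(x)}$ sits in \eqref{e.exact} with $I_t\simeq\hat\Z^{(p')}(1)^r$, and, $\kappa(x)$ being a finite extension of $k$ or of $K$, $\Gal(\bar x/x)$ acts on the pro-$\ell$ quotient $\Z_\ell(1)^r$ of $I_t$ through the cyclotomic character, which has infinite order; by tameness the monodromy factors through this tame group, so Lemma \ref{l.mono}, applied with $I=I_t$ and $G_s=\Gal(\bar x/x)$, shows that $I_t$ acts quasi-unipotently on $\cF_{\bar a}$. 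Since the tame quotient of $\pi_1((\bar Z-D)_{(\bar x)},\bar a)$ (strict Henselization) is identified with $I_t$, and the monodromy again factors through it, we get quasi-unipotence at such $\bar x$. For $\bar x$ over an arbitrary point of $D_J^*$, I would transport this along the stratum: choose a locally closed point $y\in\overline{\{x\}}\cap D_J^*$; the branches of $D$ through $x$ and through $y$ agree, so the relevant representations of $\hat\Z^{(p')}(1)^r$ on the stalks are conjugate, and quasi-unipotence at $y$ yields it at $x$.

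It remains to descend and compactify. Since $H$ acts freely on $V_0$, the local fundamental group of $(V_0/H)_{(\bar y)}$ at a geometric point contains with finite index the corresponding local fundamental group on $\bar Z$, so quasi-unipotence descends from $\bar Z$ to $\bar X'\colonequals\bar Z/H$ just as integrality does in Remark \ref{r.fin}; thus $\cF|_{V_0/H}$ has quasi-unipotent local monodromy on $\bar X'$. Applying Lemma \ref{l.uh} to the universal homeomorphism $f^{-1}(U)\to U$, and using that universal homeomorphisms induce equivalences of étale sites, we obtain a normal compactification $\bar X_0$ of a normal open $X_0\subseteq U$ on which $\cF|_{X_0}$ has quasi-unipotent local monodromy, and Noetherian induction then produces the stratification. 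The main obstacle I foresee is the last step of the middle paragraph: passing from the locally closed points — where Lemma \ref{l.mono} applies to \eqref{e.exact} directly — to arbitrary geometric points of the normal crossing strata, i.e.\ verifying that quasi-unipotence of the tame inertia action is locally constant along each stratum and controls the full local monodromy at a geometric point.
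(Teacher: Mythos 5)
Your proof is correct and follows the paper's route exactly: the paper's entire proof of Theorem \ref{t.unip} consists of the remark that the proof of Theorem \ref{t.int} goes through with Proposition \ref{p.intNCD} replaced by Lemma \ref{l.mono} applied to \eqref{e.exact}. The one place you add a step --- and the obstacle you flag at the end --- is not actually an issue: Lemma \ref{l.mono} applies to \eqref{e.exact} at \emph{every} point $x$ of the stratum $D_J^*$, not only at locally closed ones, because its only hypothesis on $G_s=\Gal(\bar x/x)$ is that the character $\chi\colon \Gal(\bar x/x)\to\Z_\ell^\times$ giving the conjugation action on $I_\ell=\Z_\ell(1)^{\#J}$ (the cyclotomic character) has infinite order, and this holds whenever $\kappa(x)$ is finitely generated over $k$ or over $K$, since no finite extension of such a field contains all $\ell$-power roots of unity. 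Indeed, the paper's proof of Proposition \ref{p.intNCD} already invokes Lemma \ref{l.mono} at an arbitrary point of $D_J^*$, and only restricts to locally closed points afterwards for the integrality conclusion; so your cospecialization argument along the stratum, while workable, can simply be dropped.
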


The analogues of Corollaries \ref{c.key} and \ref{c.val} hold with the same
proofs. Let us state the analogue of Corollary \ref{c.val}.

\begin{cor}\label{c.int}
Let $X$ be a scheme of finite type over $S$ and let $\cF$ be $\Qlb$-sheaf
on~$X$. Then $\cF$ is integral if and only if for every commutative square
\eqref{e.val} with $t$ quasi-finite over $S$, the eigenvalues of every $F\in
W^{\ge 0}(\bar \eta/\eta)$ acting on $\cF_{\bar \eta}$ belong to~$A$.
Moreover, if \eqref{e.val} is a commutative square with $V$ strictly
Henselian, then the action of $\Gal(\bar \eta/\eta)$ on $\cF_{\bar \eta}$ is
quasi-unipotent.
\end{cor}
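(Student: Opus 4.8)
The plan is to mirror the proof of Corollary \ref{c.val}, substituting Theorem \ref{t.int} for Theorem \ref{t.main} and Theorem \ref{t.unip} for the quasi-unipotence needed in part (2). First I would dispose of the ``only if'' direction and the ``if'' direction separately. For ``only if'': given a commutative square \eqref{e.val} with $t$ quasi-finite over $S$ and $\cF$ integral, one replaces $X$ by the closure of the image $\tau$ of $\eta$, so $X$ is irreducible with generic point $\tau$; shrinking, we may assume $X$ separated and $\cF$ lisse on $X$. Choosing a compactification $X\subseteq \bar X$ over $S$ and applying Theorem \ref{t.int} (together with Remark \ref{r.comp}, exactly as after its statement), we get a proper birational $f\colon \bar X'\to \bar X$ with $\bar X'$ normal such that $\cF|_{f^{-1}(X)}$ is integral on $\bar X'$. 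The valuative criterion of properness produces $g\colon V\to \bar X'$ compatible with \eqref{e.val}, and with $x=g(t)\in\lvert\bar X'\rvert$ the map $g$ induces $\eta\to X'_{(x)}$ and $W(\bar\eta/\eta)\to W(X'_{(x)},\bar\eta)$ carrying $W^{\ge 0}$ into $W^{\ge 0}$; since $\cF$ is integral on $\bar X'$, the eigenvalues of any $F\in W^{\ge 0}(\bar\eta/\eta)$ on $\cF_{\bar\eta}$ lie in $A$, as the pullback of $\cF|_{f^{-1}(X)}$ along $\eta\to X'_{(x)}$ has the same stalk.

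For the ``if'' direction: suppose the eigenvalue condition holds for every square \eqref{e.val} with $t$ quasi-finite over $S$. To check $\cF$ integral we must verify, for each $x\in\lvert X\rvert$ and each $F\in W^{\ge 0}(\bar x/x)$, that the eigenvalues of $F$ on $\cF_{\bar x}$ lie in $A$. Given such an $x$, take $V=\Spec(\cO_L)$ with $\cO_L$ a Henselian valuation ring dominating the local ring of $\bar x$ inside a suitable function field — concretely, one picks any square \eqref{e.val} whose $\eta$ maps to $x$ and whose $t$ is $x$ itself (so $t$ is quasi-finite over $S$), for instance with $\cO_L$ the trivial valuation ring on the residue field of $x$ if $x$ is a point of $X$. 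Then $W^{\ge 0}(\bar\eta/\eta)$ surjects onto $W^{\ge 0}(\bar x/x)$ compatibly with the action on the stalk, so the hypothesis gives exactly what is needed. (This matches the ``if'' part of Corollary \ref{c.val} (1), which the paper says ``follows from the definition''.)

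The final ``moreover'' clause — that for \eqref{e.val} with $V$ strictly Henselian the action of $\Gal(\bar\eta/\eta)$ on $\cF_{\bar\eta}$ is quasi-unipotent — is handled by Theorem \ref{t.unip} in place of Theorem \ref{t.int}. As above, reduce to $X$ irreducible with generic point mapping to $\tau$, shrink so $\cF$ is lisse, compactify, and apply Theorem \ref{t.unip} to obtain (after passing to a normal compactification dominating the stratum, via Remark \ref{r.comp}) a normal compactification $X\subseteq\bar X'$ on which $\pi_1((X)_{(\bar x)},\bar a)$ acts quasi-unipotently on $\cF_{\bar a}$ for every geometric point $\bar x\to\bar X'$. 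Since $V$ is strictly Henselian, the valuative criterion of properness gives $g\colon V\to\bar X'$, and the geometric point $g(\bar t)$ specializes to a geometric point $\bar x$ above some $x\in\lvert\bar X'\rvert$; then $\eta\to X'_{(g(\bar t))}\to X'_{(\bar x)}$ induces $\Gal(\bar\eta/\eta)\to\pi_1(X'_{(\bar x)},\bar a)$, and quasi-unipotence of the latter action gives quasi-unipotence of the $\Gal(\bar\eta/\eta)$-action on $\cF_{\bar\eta}$. I expect the only genuinely delicate point to be the bookkeeping in the ``if'' direction — making sure that an arbitrary locally closed point $x\in\lvert X\rvert$ (including closed points of $X_k$) is realized by a square \eqref{e.val} with $t$ quasi-finite over $S$ so that $W^{\ge 0}(\bar\eta/\eta)\twoheadrightarrow W^{\ge 0}(\bar x/x)$ — but this is routine given that $x$ is already quasi-finite over $S$ and one may take $\eta=x$ with the trivial valuation, or a henselian valuation on the function field when $x$ is not closed in $X$.
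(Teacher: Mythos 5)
Your proposal is correct and follows essentially the same route as the paper, which proves Corollary \ref{c.int} by noting that the integrality and quasi-unipotence analogues of Corollary \ref{c.key} (obtained from Theorem \ref{t.int}, resp.\ Theorem \ref{t.unip}, together with Remark \ref{r.comp}) hold with the same proofs, and then running the argument of Corollary \ref{c.val} verbatim. Your handling of the ``if'' direction via the trivial valuation on the residue field of $x$ matches the paper's remark that this part follows from the definition.
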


\section{Ramified and decomposed parts of the fundamental group}\label{s.2}

In this section we give applications related to Vidal's ramified part of the
fundamental group \cite[Section 1.2]{Vidal2}. We show that compatible
systems have compatible ramification (Corollary \ref{c.ram}), and
consequently, their reductions have compatible wild ramification (Corollary
\ref{c.d}).

Let us first review the definition of the ramified part of the fundamental
group. Let $T$ be the spectrum of an excellent Henselian discrete valuation
ring of residue characteristic exponent $p\ge 1$.

\begin{defn}[Vidal]\label{d.Vidal}
Let $X$ be an integral normal scheme separated of finite type over $T$ and
let $\bar a$ be a geometric generic point of $X$. Let $X\subseteq \bar X$ be
a normal compactification over $T$. Let $\bar x \to \bar X$ be a geometric
point above $x\in \bar X$ and let $\bar X_{(\bar x)}$ denote the strict
Henselization. Let $\bar b\to X_{(\bar x)}\colonequals X\times_{\bar X} \bar
X_{(\bar x)}$ be a geometric point above $\bar a$. We define the following
closed subsets of $\pi_1(X,\bar a)$, each of which is a union of subgroups:
\begin{itemize}
\item The subgroup $E_{X,\bar X, x,\bar b}=\Img(\pi_1(X_{(\bar x)},\bar
    b)\to \pi_1(X,\bar a))$. See Remark \ref{r.val} (1) below for the
    justification of the subscript $x$ instead of $\bar x$.
\item $E_{X,\bar X}$ is the closure of $\bigcup_{x,\bar b} E_{X,\bar
    X,x,\bar b}$, where $x$ runs through points of $\bar X$ and $\bar b$
    runs through geometric points above $\bar a$.
\item The \emph{ramified part} $E_{X/T}=\bigcap_{\bar X} E_{X,\bar X}$,
    where $\bar X$ runs through normal compactifications of $X$ over $T$.
\end{itemize}
\end{defn}

The subsets $E_{X,\bar X}$ and $E_{X/T}$ are stable under conjugation.

\begin{remark}\leavevmode\label{r.val}
\begin{enumerate}
\item We have a short exact sequence
\[
1\to\pi_1(
    X_{(\bar x)},\bar b)\xrightarrow{i} \pi_1(
     X_{(x)},\bar b)\xrightarrow{\rho} \pi_1(\bar X_{(x)},\bar b)\to 1,
\]
where $\pi_1(\bar X_{(x)},\bar b)\simeq \Gal(\bar x/x)$. The image of $i$
depends on $\bar x$ only via $x$ and depends on $\bar b$ as a geometric
point of $X_{(x)}$.

\item For any specialization $\bar x\to X_{(\bar y)}$, we have $E_{X,\bar
    X,x,\bar b}\subseteq  E_{X,\bar X,y,\bar b}$. Thus in the definition
    of $E_{X,\bar X}$, we may restrict to closed points $x\in \bar X$.

\item It follows from Gabber's valuative criterion \cite[Section
    6.1]{Vidal2} that for any finite stratification $X=\bigcup_\alpha
    X_\alpha$ into integral normal subschemes, $E_{X/T}$ is the closure of
    $\bigcup_{\alpha,\gamma_\alpha}\gamma_\alpha(E_{X_\alpha/T})$, where
    $\gamma_\alpha$ runs through paths from a geometric generic point
    $\bar a_\alpha\to X_\alpha$ to $\bar a\to X$.
\end{enumerate}
\end{remark}

Recall that $S$ is the spectrum of an excellent Henselian discrete valuation
ring of \emph{finite} residue field.

\begin{defn}
Let $X$ be a scheme of finite type over $S$. We say that a system $(L_i)\in
\prod_{i\in I}K(X,\Qlib)$ has \emph{compatible ramification} if for every
separated integral normal subscheme $Y\subseteq X$, $(\tr(g,(L_i)_{\bar
a}))_{i\in I_Y}$ is compatible for all $g\in E_{Y/T}$. Here $\bar a$ is a
geometric generic point of $Y$, $I_Y\subseteq I$ is the subset of $i$ such
that $L_i|_Y$ is in $K_\lisse(Y,\Qlib)$.
\end{defn}

\begin{remark}
Let $X$ be an integral normal scheme separated of finite type over $S$ and
let $\cF$ be a lisse $\Qlb$-sheaf on $X$. Then the action of $E_{X/S}$ on
$\cF_{\bar a}$ is quasi-unipotent. This follows from Theorem \ref{t.unip},
Remark \ref{r.val} (3), and the fact that quasi-unipotent matrices form a
closed subset of $\GL_n(\Qlb)$ (Remark \ref{r.unip}).
\end{remark}

Combining this with Gabber's valuative criterion \cite[Section 6.1]{Vidal2},
we obtain the following valuative criterion for compatible ramification.

\begin{lemma}\label{l.val}
Let $X$ be a scheme of finite type over $S$. Then $(L_i)\in \prod_{i\in
I}K(X,\Qlib)$ has compatible ramification if and only if for every
commutative square \eqref{e.val} with $V$ strictly Henselian,
$\tr(F,(L_i)_{\bar\eta})_{i\in I}$ is compatible for all $F\in \Gal(\bar
\eta/\eta)$.
\end{lemma}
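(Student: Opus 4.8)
The plan is to prove the two implications separately, the essential input being Gabber's valuative criterion \cite[Section 6.1]{Vidal2}, which I would use in the following shape: for $Y$ integral normal separated of finite type over $S$ with geometric generic point $\bar a$, the ramified part $E_{Y/S}$ is the closure in $\pi_1(Y,\bar a)$ of the set $D_Y$ consisting of the twisted images $\gamma\cdot\Img(\Gal(\bar\eta/\eta)\to\pi_1(Y,\bar a))$, taken over all commutative squares \eqref{e.val} (with $Y$ in place of $X$ and $V$ strictly Henselian) and all paths $\gamma$ from the image of $\bar\eta$ in $Y$ to $\bar a$. The inclusion $D_Y\subseteq E_{Y/S}$ is the formal half (valuative criterion of properness: extend $\eta\to\bar Y$ to $V\to\bar Y$ for each normal compactification $\bar Y$, then use that $V$ strictly Henselian factors through $\bar Y_{(\bar x)}$), while $E_{Y/S}\subseteq\overline{D_Y}$ is Gabber's criterion proper. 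In both directions I would first reduce to $I$ finite, since compatibility of a system of local traces at a fixed element is detected on finite subsystems (the witnessing elements of $Q$ agree because the $\iota_i$ are injective).

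For the ``only if'' part, fix a square \eqref{e.val} with $V$ strictly Henselian and an element $F\in\Gal(\bar\eta/\eta)$. I would replace $X$ by a separated normal dense open $Y$ of the reduced closure of the image of $\eta$, shrunk so that every $L_i|_Y$ is lisse (hence $I_Y=I$) and so that $\eta$ still maps to $Y$ with image its generic point; the stalk $(L_i)_{\bar\eta}$ is unchanged. Then $D_Y\subseteq E_{Y/S}$, so for a suitable path the image $g$ of $F$ in $\pi_1(Y,\bar a)$ lies in $E_{Y/S}$ and $\tr(F,(L_i)_{\bar\eta})=\tr(g,(L_i)_{\bar a})$ for all $i$; compatible ramification of $(L_i)$ applied to $Y$ then yields the claim. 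This direction is essentially formal.

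For the ``if'' part, I would fix a separated integral normal $Y\subseteq X$ with geometric generic point $\bar a$ and pass to the subsystem indexed by $I_Y$, so that all $L_i|_Y=[\cF_i]-[\cG_i]$ ($i\in I_Y$) are lisse. Every square \eqref{e.val} over $Y$ is also one over $X$, so the hypothesis together with translation along a path gives that $(\tr(g,(L_i)_{\bar a}))_{i\in I_Y}$ is compatible for every $g$ in the dense subset $D_Y\subseteq E_{Y/S}$. The remaining task is to propagate compatibility from $D_Y$ to its closure $E_{Y/S}$, and this is where the remark preceding the statement enters: the action of $E_{Y/S}$ (hence of $D_Y$) on the $(\cF_i)_{\bar a}$ and $(\cG_i)_{\bar a}$ is quasi-unipotent, so $E_{Y/S}$ lies in the closed subset $\pi_1^{\qu}\subseteq\pi_1(Y,\bar a)$ of elements acting quasi-unipotently on all of them. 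On $\pi_1^{\qu}$ each characteristic-polynomial map takes values in the discrete closed subset $P_\ell^{n,\qu}$ of Remark \ref{r.unip}, hence is locally constant; therefore $g\mapsto(\tr(g,(L_i)_{\bar a}))_{i\in I_Y}$ is locally constant on $\pi_1^{\qu}$. Given $g\in E_{Y/S}$, I would pick an open $W\ni g$ in $\pi_1(Y,\bar a)$ on which $W\cap\pi_1^{\qu}$ carries a constant tuple $(\tr(\cdot,(L_i)_{\bar a}))_{i\in I_Y}$; since $g\in\overline{D_Y}$ there is $g'\in W\cap D_Y\subseteq W\cap\pi_1^{\qu}$, whence $(\tr(g,(L_i)_{\bar a}))_{i\in I_Y}=(\tr(g',(L_i)_{\bar a}))_{i\in I_Y}$, which is compatible.

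The step I expect to be the crux is this last propagation. A priori, compatibility of a system of traces is \emph{not} a closed condition, since $Q$ carries no topology; the only reason it spreads from the ``valuative'' elements $D_Y$ to all of $E_{Y/S}$ is that quasi-unipotence pins the characteristic polynomials---and hence the traces---to a discrete set, making them locally constant along $E_{Y/S}$. Thus the whole argument rests on the quasi-unipotence of the $E_{Y/S}$-action, which ultimately comes from Theorem \ref{t.unip}, i.e.\ the boundary results built on Theorem \ref{t.main}.
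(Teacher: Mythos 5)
Your proposal is correct and follows essentially the same route as the paper: reduce to $I$ finite and to a single separated integral normal $Y$ with the $L_i$ lisse, invoke Gabber's valuative criterion to identify $E_{Y/S}$ as the closure of the valuative images of $\Gal(\bar\eta/\eta)$, and propagate compatibility to the closure via the quasi-unipotence of the $E_{Y/S}$-action together with the discreteness of $P_{\ell}^{n,\qu}$ (Remark \ref{r.unip}). The paper packages your ``local constancy'' step as a single continuous map $\sigma$ from $E_{X/S}$ to the discrete product of spaces of quasi-unipotent characteristic polynomials, but the content, including your correct identification of the crux (compatibility is not a closed condition on traces alone), is identical.
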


\begin{proof}
We may assume $I$ finite, $X$ integral normal separated, and
$L_i=[\cF_i]-[\cG_i]$ with $\cF_i$ and $\cG_i$ lisse, respectively of rank
$m_i$ and $n_i$. Consider the continuous map
\[\sigma\colon E_{X/S}\to
C=\prod_{i\in I} (P^{m_i,\qu}_{\ell_i}\times P^{n_i,\qu}_{\ell_i})
\]
carrying $g$ to $(\det(g-T\cdot 1,(\cF_i)_{\bar a}), \det(g-T\cdot
1,(\cG_i)_{\bar a}))$, where $P^{r,\qu}_{\ell_i}$ is as in Remark
\ref{r.unip}. Gabber's criterion says that $E_{X/S}$ is the closure of the
union of the images of $\Gal(\bar \eta/\eta)$. Since $C$ is discrete,
$\sigma(E_{X/S})$ is the union of the images of $\Gal(\bar\eta/\eta)$.
\end{proof}

Corollary \ref{c.val} (2) now takes the following form.

\begin{cor}\label{c.ram}
Let $X$ be a scheme of finite type over $S$. Then any compatible system
$(L_i)\in \prod_{i\in I}K(X,\Qlib)$ has compatible ramification.
\end{cor}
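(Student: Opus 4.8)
The plan is to deduce Corollary \ref{c.ram} formally by combining the two valuative criteria already established: the criterion for compatible ramification (Lemma \ref{l.val}) and the criterion for compatible systems (Corollary \ref{c.val} (2)). First I would observe that, since compatible ramification is defined via the sets $E_{Y/S}$ attached to the integral normal separated subschemes $Y\subseteq X$, and since for such a $Y$ the restriction $(L_i|_Y)_{i\in I_Y}$ is again a compatible system (compatibility is checked on locally closed points, so it passes to subschemes, and discarding the indices where $L_i|_Y$ fails to be lisse does not affect this), it suffices to treat the case where $X$ itself is integral normal separated of finite type over $S$ and $(L_i)\in\prod_{i\in I}K_\lisse(X,\Qlib)$ is compatible.

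Next, by Lemma \ref{l.val}, the system $(L_i)$ has compatible ramification if and only if for every commutative square \eqref{e.val} with $V$ strictly Henselian, $(\tr(F,(L_i)_{\bar\eta}))_{i\in I}$ is compatible for all $F\in\Gal(\bar\eta/\eta)$. But this is exactly the assertion of Corollary \ref{c.val} (2) applied to the compatible system $(L_i)$ and to the given square. Hence the criterion of Lemma \ref{l.val} is satisfied, and $(L_i)$ has compatible ramification.

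I do not expect any genuine obstacle at this final step: all the substance — passing from Theorem \ref{t.main} to the valuative criterion Corollary \ref{c.val} via Corollary \ref{c.key}, and the translation of Gabber's valuative criterion for $E_{X/S}$ into the statement of Lemma \ref{l.val} using quasi-unipotence and the discreteness of $P^{n,\qu}_\ell$ — has already been carried out above. The proof is therefore just the remark that Lemma \ref{l.val} and Corollary \ref{c.val} (2) have matching hypotheses and conclusions, so the latter feeds directly into the former. The one minor point worth spelling out, as in the first paragraph, is the reduction to the integral normal separated case and the bookkeeping with the index subset $I_Y$, which is routine.
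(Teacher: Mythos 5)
Your proposal is correct and is essentially the paper's own argument: the paper derives Corollary \ref{c.ram} by exactly this combination of Lemma \ref{l.val} with Corollary \ref{c.val} (2). (Your preliminary reduction to the integral normal separated lisse case is harmless but not needed, since Lemma \ref{l.val} is already stated for arbitrary $X$ and $(L_i)\in\prod_{i\in I}K(X,\Qlib)$ and performs that reduction internally.)
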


Let $P$ be a set of prime numbers. Given a profinite group $G$, we let
$G^P\subseteq G$ denote the subset of elements $g$ such that all prime
factors of the supernatural order of $g$ are contained in $P$. Note that
$G^P$ is a closed subset stable under conjugation, union of subgroups of
$G$. For a continuous homomorphism of profinite groups $\alpha\colon G\to
H$, we have $\alpha(G^P)=\alpha(G)\cap H^P$.

We write $(p)=\{p\}$ for $p>1$ and $(p)=\emptyset$ for $p=1$. Then $G^{(p)}$
is the union of the $p$-Sylow subgroups of $G$.

\begin{notation}
In the situation of Definition \ref{d.Vidal}, for $G=\pi_1(X,\bar a)$, we
write
\[E^P_{X,\bar X,x,\bar b}=E_{X,\bar X,x,\bar b}\cap G^P, \quad E^P_{X,\bar X}=E_{X,\bar X}\cap G^P, \quad E^P_{X/T}=E_{X/T}\cap G^P.\]
\end{notation}

\begin{remark}
Alternatively we can define these subsets as follows:
\begin{itemize}
\item $E^P_{X,\bar X, x,\bar b}=\Img(\pi_1(X_{(\bar x)},\bar b)^P\to
    \pi_1(X,\bar a))$.
\item $E^P_{X,\bar X}$ is the closure of $\bigcup_{x,\bar b}E^P_{X,\bar
    X,x,\bar b}$, where $x$ runs through points of $\bar X$ and $\bar b$
    runs through geometric points above $\bar a$.
\item $E^P_{X/T}=\bigcap_{\bar X} E^P_{X,\bar X}$, where $\bar X$ runs
    through normal compactifications of $X$ over~$T$.
\end{itemize}
\end{remark}

For $P=(p)$, $E^{(p)}_{X/T}$ is called the \emph{wildly ramified part} of
the fundamental group and was defined by Vidal \cite[2.1]{Vidal1}. Our
notation differs from that of Vidal, who writes $E'_{X/T}$ and $E_{X/T}$ for
our $E_{X/T}$ and $E^{(p)}_{X/T}$, respectively.

Next we define compatible $P$-ramification for systems of $\Flb$-sheaves,
where $\Flb$ denotes an algebraic closure of $\F_\ell$. For a profinite
group $G$, an element $g\in G$ that is $\ell$-regular (namely, of
supernatural order prime to $\ell$), and a virtual $\Flb$-representation $M$
of $G$, the Brauer trace is defined by $\tr^\Br(g,M)=\sum_\lambda
[\lambda]$, where $\lambda$ runs through eigenvalues of $g$ acting on $M$
(with multiplicities), and $[\lambda]$ denotes the Teichm\"uller lift. Note
that $\tr^\Br(g,M)$ is a sum of roots of unity (of order prime to $\ell$) in
$\Qlb$.

Let $X$ be a scheme of finite type over $T$. Let $K(X,\Flb)$ denote the
Grothendieck group of constructible $\Flb$-sheaves.  Recall that we fixed
field embeddings $\iota_i\colon Q\to {\Qlib}$ for $i\in I$.

\begin{defn}\label{d.comp}
Assume that $P$ does not contain any $\ell_i$. We say that a system
$(L_i)\in \prod_{i\in I}K(X,\Flib)$ has \emph{compatible $P$-ramification}
if for every separated integral normal subscheme $Y\subseteq X$,
$(\tr^\Br(g,(L_i)_{\bar a}))_{i\in I_Y}$ is compatible for all $g\in
E^P_{Y/T}$. Here $\bar a$ is a geometric generic point of $Y$, $I_Y\subseteq
I$ is the subset of $i$ such that $L_i|_Y$ is in $K_\lisse(Y,\Flib)$. We say
that $(L_i)_{i\in I}$ has \emph{compatible wild ramification} if it has
compatible $(p)$-ramification.
\end{defn}

In the special case $\ell_i=\ell$ and $Q=\Qlb$, one recovers the notion of
same wild ramification of Deligne \cite{IllBrauer} and Vidal \cite{Vidal1}.
A weaker condition was recently studied by Saito and Yatagawa (\cite{SY},
\cite{Yatagawa}). In \cite{Guo} Guo shows that systems of compatible wild
ramification in the sense of Definition \ref{d.comp} are preserved by
Grothendieck's six operations and duality.

Gabber's valuative criterion \cite[Section 6.1]{Vidal2} implies the
following valuative criterion for compatible $P$-ramification.

\begin{lemma}\label{l.Gabber}
Let $X$ be a scheme of finite type over $T$. Then $(L_i)\in \prod_{i\in
I}K(X,\Flib)$ has compatible $P$-ramification if and only if for every
commutative square
\[
\xymatrix{\eta\ar@{^{(}->}[d] \ar[r] & X\ar[d]\\
\Spec(\cO_L)\ar[r] & T}
\]
where $\cO_L$ is a strictly Henselian valuation ring of fraction field $L$
and $\eta=\Spec(L)$, $(\tr^\Br(g,(L_i)_{\bar \eta}))$ is compatible for all
$g\in \Gal(\bar \eta/\eta)^P$. Here $\bar \eta \to \eta$ is a geometric
point.
\end{lemma}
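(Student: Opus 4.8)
The plan is to follow the proof of Lemma~\ref{l.val} almost verbatim, replacing $\Qlb$-sheaves by $\Flb$-sheaves, ordinary traces by Brauer traces, the ramified part $E_{X/S}$ by its $P$-part $E^P_{X/T}$, and the space of quasi-unipotent characteristic polynomials by the space of all characteristic polynomials of the monodromy, which is now automatically finite.

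First I would reduce to $I$ finite: a tuple $(b_i)_{i\in I}$ of elements of $\Qlb$ is compatible as soon as every finite subtuple is, since the witnessing element $a\in Q$ is uniquely determined by any single index (as the $\iota_i$ are injective) and these witnesses agree under refinement. Then, exactly as in Lemma~\ref{l.val}, I would reduce to the case where $X$ is integral, normal and separated of finite type over $T$ and each $L_i=[\cF_i]-[\cG_i]$ with $\cF_i,\cG_i$ lisse $\Flib$-sheaves of ranks $m_i$ and $n_i$. Indeed, both sides of the asserted equivalence can be checked on integral normal separated subschemes $Y\subseteq X$; after shrinking such a $Y$ one may assume that every $L_i|_Y$ is lisse, so that $I_Y=I$; a valuative square with target $X$ factors through such a $Y$, namely a normal separated open neighbourhood of the image $\tau$ of $\eta$ inside $\overline{\{\tau\}}$; and the interplay between a stratum $Y$ and the boundary strata of a compactification is handled by Remark~\ref{r.val}~(3) and Noetherian induction.

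Put $G=\pi_1(X,\bar a)$. Since a lisse $\Flib$-sheaf is trivialized by a finite \'etale cover, each monodromy representation has finite image, so the assignment
\[
\sigma\colon G\longrightarrow C,\qquad g\longmapsto \bigl(\det(g-T\cdot 1,(\cF_i)_{\bar a}),\ \det(g-T\cdot 1,(\cG_i)_{\bar a})\bigr)_{i\in I},
\]
valued in the \emph{finite} set $C$ of tuples of characteristic polynomials (over a finite subfield of $\Flib$ through which all the monodromy representations factor), is locally constant. For $g\in G^P$ the element $g$ is $\ell_i$-regular for every $i$ (as $P$ contains no $\ell_i$), hence acts semisimply on each $(\cF_i)_{\bar a}$ and $(\cG_i)_{\bar a}$, so that $\sigma(g)$ determines the multiset of eigenvalues of $g$ on these spaces and therefore the whole tuple $(\tr^\Br(g,(L_i)_{\bar a}))_{i\in I}$, in particular whether it is compatible. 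Let $C^{\mathrm{comp}}\subseteq C$ be the subset of values for which the associated Brauer-trace tuple is compatible.

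Finally I would invoke Gabber's valuative criterion \cite[Section~6.1]{Vidal2} — in the form recalled in the proof of Lemma~\ref{l.val} and in Remark~\ref{r.val}~(3), combined with the identity $\alpha(G^P)=\alpha(G)\cap H^P$ for continuous homomorphisms of profinite groups — to the effect that $E^P_{X/T}$ is the closure of the union of the images of $\Gal(\bar\eta/\eta)^P$ in $G$ over all commutative squares of the displayed shape with $\cO_L$ strictly Henselian. Since $\sigma$ is locally constant and $C$ is discrete, $\sigma$ carries this closure onto the plain union of these images; hence $(L_i)_{i\in I}$ has compatible $P$-ramification on $X$ — equivalently $\sigma(E^P_{X/T})\subseteq C^{\mathrm{comp}}$ — if and only if $\sigma$ maps every such image into $C^{\mathrm{comp}}$, which by base change ($\tr^\Br(g,(L_i)_{\bar\eta})=\tr^\Br(\alpha(g),(L_i)_{\bar a})$) is precisely the stated criterion. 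The main obstacle is pinning down the exact form of Gabber's valuative criterion for the $P$-part and for strictly Henselian valuation rings rather than for discrete valuation rings and normal compactifications, and verifying that it meshes correctly with the quantifier over subschemes $Y\subseteq X$ in Definition~\ref{d.comp}; the representation-theoretic input, by contrast, is easier here than in the $\Qlb$-case, since the monodromy of a lisse $\Flib$-sheaf already factors through a finite quotient and no analogue of Grothendieck's arithmetic local monodromy theorem is needed.
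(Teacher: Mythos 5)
Your proposal is correct and takes essentially the route the paper intends: the paper states this lemma as a direct consequence of Gabber's valuative criterion, with the argument being the evident adaptation of its proof of Lemma~\ref{l.val}. Your write-up matches that adaptation, including the key simplification that finite monodromy of lisse $\Flib$-sheaves makes the characteristic-polynomial map locally constant with discrete (indeed finite) image, so no quasi-unipotence argument is needed, and the passage from $E_{X/T}$ to $E^P_{X/T}$ via $\alpha(G^P)=\alpha(G)\cap H^P$ is exactly the intended one.
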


\begin{remark}
Let $X$ be an integral normal scheme separated of finite type over~$T$ and
let $(L_i)\in \prod_{i\in I} K_\lisse(X,\Flib)$ with $I$ finite. Then
$(L_i)_{i\in I}$ has compatible $P$-ramification if and only if there exists
a normal compactification $X\subseteq \bar X$ over $T$ such that
$(\tr^\Br(g,(L_i)_{\bar a}))_{i\in I}$ is compatible for all $g\in
E^P_{X,\bar X}$.

Indeed, for any finite quotient $G$ of $\pi_1(X,\bar a)$, if we let
$E^P_{X,\bar X}(G)$ and $E^P_{X/T}(G)$ denote respectively the images of
$E^P_{X,\bar X}$ and $E^P_{X/T}$ in $G$, then we have
$E^P_{X/T}(G)=\bigcap_{\bar X} E^P_{X,\bar X}(G)$ by Lemma \ref{l.top}
below, and it follows that $E^P_{X/T}(G)=E^P_{X,\bar X}(G)$ for some $\bar
X$. Here we used the fact that any pair of normal compactifications is
dominated by a third one (cf.\ Remark \ref{r.comp}).
\end{remark}

\begin{lemma}\label{l.top}
Let $\Pi$ be a topological space and let $B$ be a downward directed set of
closed subsets of $\Pi$: for $E_1,E_2\in B$, there exists $E\in B$ such that
$E\subseteq E_1\cap E_2$. Let $\sigma\colon \Pi\to C$ be a map such that all
fibers are compact. Then $\sigma(\bigcap_{E\in B}E)=\bigcap_{E\in
B}\sigma(E)$.
\end{lemma}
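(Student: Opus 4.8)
The plan is to establish the nontrivial inclusion $\bigcap_{E\in B}\sigma(E)\subseteq\sigma\bigl(\bigcap_{E\in B}E\bigr)$ by a finite-intersection-property argument, the reverse inclusion being immediate from $\sigma\bigl(\bigcap_{E\in B}E\bigr)\subseteq\sigma(E)$ for each $E\in B$. We may assume $B\neq\emptyset$, as is the case in the intended application, where $B$ is the (nonempty) set of normal compactifications of $X$.

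First I would fix $c\in\bigcap_{E\in B}\sigma(E)$ and put $K=\sigma^{-1}(c)$, which is compact by hypothesis. For each $E\in B$ the set $K\cap E$ is closed in $K$, since $E$ is closed in $\Pi$ and $K$ carries the subspace topology; moreover $K\cap E\neq\emptyset$, because $c\in\sigma(E)$ provides some $x\in E$ with $\sigma(x)=c$.

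Next I would verify that the family $\{K\cap E\mid E\in B\}$ of closed subsets of $K$ has the finite intersection property. Given $E_1,\dots,E_n\in B$, downward directedness of $B$ yields $E\in B$ with $E\subseteq E_1\cap\dots\cap E_n$, hence $\emptyset\neq K\cap E\subseteq\bigcap_{j=1}^n(K\cap E_j)$. By compactness of $K$ it follows that $\bigcap_{E\in B}(K\cap E)\neq\emptyset$. Choosing a point $x$ in this intersection gives $\sigma(x)=c$ together with $x\in\bigcap_{E\in B}E$, so $c\in\sigma\bigl(\bigcap_{E\in B}E\bigr)$, which is what we wanted.

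There is no genuine obstacle here: the statement is a soft topological fact, and the single point needing care is that ``all fibers compact'' is precisely the hypothesis that powers the finite-intersection-property step, while ``each $E$ closed in $\Pi$'' is exactly what makes $K\cap E$ closed in the subspace $K$. No separation axioms on $\Pi$ or $C$, and no continuity of $\sigma$, are required.
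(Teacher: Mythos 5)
Your argument is correct and is essentially the paper's proof: both reduce the nontrivial inclusion to compactness of the fiber $\sigma^{-1}(c)$ combined with downward directedness of $B$, the paper merely phrasing the finite-intersection-property step in contrapositive form (an empty intersection with the compact fiber already occurs for finitely many $E_i$, hence for a single $E$). Your explicit remarks on why $K\cap E$ is closed and nonempty, and on assuming $B\neq\emptyset$, are sound and consistent with the paper's implicit assumptions.
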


\begin{proof}
We have $\sigma(\bigcap_{E\in B}E)\subseteq \bigcap_{E\in B}\sigma(E)$.
Conversely, let $g\in C-\sigma(\bigcap_{E\in B}E)$. Then $\sigma^{-1}(g)\cap
\bigcap_{E\in B}E=\emptyset$. Since $\sigma^{-1}(g)$ is compact, there exist
$E_1,\dots,E_n\in B$ such that $\sigma^{-1}(g)\cap \bigcap_{i=1}^n
E_i=\emptyset$. Since $B$ is downward directed, there exists $E\in B$ such
that $\sigma^{-1}(g)\cap E=\emptyset$. In other words, $g\in C-\bigcap_{E\in
B}\sigma(E)$.
\end{proof}

\begin{defn}
Let $E_\lambda$ by a finite extension of $\Q_\ell$, of ring of integers
$\cO_\lambda$ and residue field $\F_\lambda$. Consider the composition
\[K(X,E_\lambda)\xrightarrow[\sim]{(j^*)^{-1}} K(X,\cO_\lambda)
\xrightarrow{i^*} K(X,\F_\lambda),
\]
where $j^*$ is given by $-\otimes_{\cO_\lambda} E_\lambda$ and $i^*$ is
given by $-\otimes^L_{\cO_\lambda} \F_\lambda$. By \cite[Proposition
9.4]{six}, $j^*$ is an isomorphism and $i^*$ is a surjection. Taking
colimit, we get the \emph{decomposition map}
\[d_X\colon K(X,\Qlb)\to K(X,\Flb),\]
which is a surjection.
\end{defn}

It follows from the definition that if $(L_i)\in \prod_{i\in I}K(X,\Qlib)$
has compatible ramification, then $(d_X(L_i))\in \prod_{i\in I}K(X,\Flib)$
has compatible $P$-ramification. Thus Corollary \ref{c.ram} implies the
following.

\begin{cor}\label{c.d}
Let $X$ be a scheme of finite type over $S$. Let $(L_i)\in \prod_{i\in I}
K(X,\Qlib)$ be a compatible system. Then $(d_X(L_i))\in \prod_{i\in
I}K(X,\Flib)$ has compatible $P$-ramification, where $P$ is the set of
primes not equal to any $\ell_i$. In particular, $(d_X(L_i))_{i\in I}$ has
compatible wild ramification.
\end{cor}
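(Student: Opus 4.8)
The plan is to derive Corollary~\ref{c.d} from Corollary~\ref{c.ram} by transporting compatible ramification across the decomposition map, using the valuative criteria to avoid bookkeeping with the index sets $I_Y$. By Corollary~\ref{c.ram} the system $(L_i)$ has compatible ramification, so by Lemma~\ref{l.val}, for every commutative square~\eqref{e.val} with $V$ strictly Henselian the family $(\tr(F,(L_i)_{\bar\eta}))_{i\in I}$ is compatible for all $F\in\Gal(\bar\eta/\eta)$; say $\tr(F,(L_i)_{\bar\eta})=\iota_i(a)$ with $a\in Q$. On the other hand, by Lemma~\ref{l.Gabber}, $(d_X(L_i))$ has compatible $P$-ramification as soon as, for every such square, $(\tr^\Br(g,(d_X L_i)_{\bar\eta}))_{i\in I}$ is compatible for all $g\in\Gal(\bar\eta/\eta)^P$ (such $g$ is $\ell_i$-regular for every $i$, since $\ell_i\notin P$, so the Brauer trace is defined, and $(d_X L_i)_{\bar\eta}$ is the reduction modulo $\ell_i$ of $(L_i)_{\bar\eta}$). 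Hence it suffices to prove the trace identity $\tr^\Br(g,(d_X L_i)_{\bar\eta})=\tr(g,(L_i)_{\bar\eta})$ in $\Qlib$ for $g\in\Gal(\bar\eta/\eta)^P$.

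For this, fix $i$ and represent $(L_i)_{\bar\eta}$ as a virtual representation with coefficients in $\GL_n(\cO_\lambda)$ for a finite extension $E_\lambda$ of $\Q_{\ell_i}$ with ring of integers $\cO_\lambda$ and residue field $\F_\lambda$ (the coefficients through which $d_X$ is computed). The key point is that $g$ acts through a finite quotient of order prime to $\ell_i$: the closure $C=\overline{\langle g\rangle}$ of the subgroup generated by $g$ is procyclic, and by definition of $\Gal(\bar\eta/\eta)^P$ its supernatural order has all prime factors in $P$, so $C$ is pro-$P$; its image in $\GL_n(\cO_\lambda)$ is then pro-$P$, while its intersection with the congruence kernel $\ker(\GL_n(\cO_\lambda)\to\GL_n(\F_\lambda))$ is a closed subgroup of a pro-$P$ group that is at the same time pro-$\ell_i$, hence trivial since $\ell_i\notin P$; therefore the image injects into $\GL_n(\F_\lambda)$ and is finite of order prime to $\ell_i$.

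Consequently the eigenvalues of $g$ on $(L_i)_{\bar\eta}$ (taken with multiplicities in the virtual sense) are roots of unity of order prime to $\ell_i$. Since reduction modulo $\ell_i$ restricts to a bijection from the group of roots of unity of order prime to $\ell_i$ onto $\Flib^\times$, with Teichm\"uller lift as inverse, each such eigenvalue $\lambda$ satisfies $[\bar\lambda]=\lambda$, so $\tr^\Br(g,(d_X L_i)_{\bar\eta})=\sum_\lambda[\bar\lambda]=\sum_\lambda\lambda=\tr(g,(L_i)_{\bar\eta})$. Combined with the first paragraph this gives $\tr^\Br(g,(d_X L_i)_{\bar\eta})=\iota_i(a)$ for all $i$, so $(d_X(L_i))$ has compatible $P$-ramification. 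The ``in particular'' clause is immediate: since $p\neq\ell_i$ for every $i$ we have $(p)\subseteq P$, hence $\Gal(\bar\eta/\eta)^{(p)}\subseteq\Gal(\bar\eta/\eta)^P$, so compatible $P$-ramification implies compatible $(p)$-ramification, i.e.\ compatible wild ramification. I do not foresee any real obstacle: the whole arithmetic depth sits in Corollary~\ref{c.ram}, and what remains is the standard fact that a pro-$P$ group with $\ell_i\notin P$ acts through a finite prime-to-$\ell_i$ quotient in the $\ell_i$-adic setting, together with the compatibility of reduction and Teichm\"uller lift with traces—both routine.
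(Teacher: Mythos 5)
Your proof is correct and follows the paper's route: the paper likewise deduces the corollary from Corollary~\ref{c.ram}, observing that compatible ramification of $(L_i)$ implies compatible $P$-ramification of $(d_X(L_i))$ ``from the definition'' --- the content of that observation being exactly your identity $\tr^{\Br}(g,(d_XL_i)_{\bar\eta})=\tr(g,(L_i)_{\bar\eta})$ for $g$ of supernatural order prime to every $\ell_i$, applied directly to $E^P_{Y/T}\subseteq E_{Y/T}$ rather than via the valuative criteria. Your detour through Lemmas~\ref{l.val} and~\ref{l.Gabber} is harmless and neatly sidesteps the mismatch between the index sets $I_Y$ appearing in the two definitions.
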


Next we define the decomposed part of the fundamental group. The first three
steps of the definition are analogous to Definition \ref{d.Vidal}, with
inertia groups (associated to strict Henselizations) replaced by
decomposition groups (associated to Henselizations).

\begin{defn}
Let $X$ be an integral normal scheme separated of finite type over $T$ and
let $\bar a$ be a geometric generic point of $X$. Let $X\subseteq \bar X$ be
a normal compactification over $T$. Let $x\in \bar X$ be a point and let
$\bar b\to X_{(x)}$ be a geometric point above $\bar a$. Let
$X=\bigcup_{\alpha} X_\alpha$ be a finite stratification of $X$ into
integral normal subschemes. We define the following closed subsets of
$\pi_1(X,\bar a)$, each of which is a union of subgroups:
\begin{itemize}
\item The subgroup $D_{X,\bar X,x,\bar b}=\Img(\pi_1(X_{(x)},\bar b)\to
    \pi_1(X,\bar a))$.
\item $D_{X,\bar X}$ is the closure of $\bigcup_{x,\bar b} D_{X,\bar X,
    x,\bar b}$, where $x$ runs through \emph{locally closed} points of
    $\bar X$ and $\bar b$ runs through geometric points above $\bar a$.
\item $D^\naive_{X/T}=\bigcap_{\bar X} D_{X,\bar X}$, where $\bar X$ runs
    through normal compactifications of $X$ over $T$.
\item $D_{X/T,(X_\alpha)}$ is the closure of
    $\bigcup_{\alpha,\gamma_\alpha}\gamma_\alpha(D^\naive_{X_\alpha/T})$,
    where $\gamma_\alpha$ runs through paths from a geometric generic
    point $\bar a_\alpha\to X_\alpha$ to $\bar a\to X$.
\item The \emph{decomposed part} $D_{X/T}=\bigcap D_{X/T,(X_\alpha)}$,
    where $(X_\alpha)$ runs through finite stratifications of $X$ into
    integral normal
subschemes.
\end{itemize}
\end{defn}

Except for $D_{X,\bar X,x,\bar b}$, the above subsets are stable under
conjugation.

The definition is functorial in an obvious sense, which we specify for
$D^\naive_{X/T}$ and $D_{X/T}$. Given a morphism $f\colon X\to Y$ of
integral normal schemes of finite type over $T$ and a path $\gamma$ from
$\bar a\to X$ to a geometric generic point $\bar a'\to Y$, the induced
homomorphism $\gamma\colon \pi_1(X,\bar a)\to \pi_1(Y,\bar a')$ satisfies
$\gamma(D^\naive_{X/T})\subseteq D^\naive_{Y/T}$ and
$\gamma(D_{X/T})\subseteq D_{Y/T}$. We have $D_{X/T}\subseteq
D_{X/T,(X_\alpha)}\subseteq D^\naive_{X/T}$, where the second inclusion
follows from the functoriality of $D^\naive_{X/T}$.

\begin{remark}\leavevmode\label{r.D}
\begin{enumerate}
\item  In the absence of a valuative criterion, we performed the last two
    steps in the definition to ensure that for any finite stratification
    $(X_\alpha)$ of $X$ into integral normal subschemes, $D_{X/T}$ is the
    closure of
    $\bigcup_{\alpha,\gamma_\alpha}\gamma_\alpha(D_{X_\alpha/T})$, where
    $\gamma_\alpha$ runs through paths from a geometric generic point
    $\bar a_\alpha\to X_\alpha$ to $\bar a\to X$.
\item If $a$ denotes the generic point of $X$, then $D_{X,\bar X,a,\bar
    a}=\pi_1(X,\bar a)$. On the other hand, for $x\in \bar X$ locally
    closed, the closure $\overline{\{x\}}$ is finite over $T$ and if we
    let $y$ denote the closed point of $\overline{\{x\}}$, then we have a
    canonical morphism $X_{(x)} \to X_{(y)}$ as in Remark \ref{r.closed},
    so that $D_{X,\bar X,x,\bar b}\subseteq D_{X,\bar X,y,\bar b}$. Thus,
    in the definition of $D_{X,\bar X}$, we may restrict to closed points
    $x\in \bar X$.
\item For $x\in \bar X$ closed, the exact sequence in Remark \ref{r.val}
    (1) induces an exact sequence
    \[1\to E_{X,\bar X,x,\bar b} \to D_{X,\bar X,x,\bar b}\to \pi_1(\bar X_{(x)},\bar b) \to 1.\]
    Indeed, in the commutative square
\[\xymatrix{\pi_1(X_{(x)},\bar b)\ar[r]^\rho\ar[d]_{\tau} & \pi_1(\bar X_{(x)},\bar b)\ar[d]^\iota\\
\pi_1(X,\bar a)\ar[r]^\sigma & \pi_1(T,\bar a),}
\]
$\iota$ is an injection, so that $\Ker(\tau)\subseteq \Ker(\rho)$. Let
$K=\Ker(\sigma)$. Then
\[E_{X,\bar X,x,\bar b}=K\cap D_{X,\bar X,x,\bar b},\quad E_{X,\bar X}\subseteq K\cap D_{X,\bar X}, \quad E_{X/T}\subseteq K\cap D_{X/T}.\]
\item Assume $T=S$. If $X_k$ is geometrically unibranch, then $D_{X/S}$
    contains the image of $\pi_1(X_k)$. Indeed, $D_{X_k/S}\subseteq
    \pi_1(X_k)$ contains the Frobenius element at every $x\in \lvert X_k
    \rvert$, so that $D_{X_k/S}=\pi_1(X_k)$ in this case by Chebotarev's
    density theorem. If moreover $X$ is proper over $S$ so that
    $\pi_1(X_k)\simeq \pi_1(X)$ \cite[XII Th\'eor\`eme 5.9]{SGA4-3}, then
    $D_{X/S}=\pi_1(X,\bar a)$. The equality does not hold in general, even
    for $X$ proper over $S$.
\end{enumerate}
\end{remark}

Theorem \ref{t.main} implies the following density result.

\begin{cor}\label{t.dense}
Let $X$ be an integral normal scheme separated of finite type over~$S$. Then
$D_{X/S}$ is the closure of $\bigcup_{\bar x,\gamma}\gamma(W(\bar x/x))$,
where $\bar x$ runs through geometric points of $X$ above $x\in \lvert
X\rvert$ and $\gamma$ runs through paths from $\bar x\to x$ to $\bar a\to
X$.
\end{cor}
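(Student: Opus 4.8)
The plan is to prove the two inclusions between $D_{X/S}$ and $Z\colonequals\overline{\bigcup_{\bar x,\gamma}\gamma(W(\bar x/x))}$. Since $W(\bar x/x)$ is dense in $\Gal(\bar x/x)$ for $x\in\lvert X\rvert$ (it surjects onto the dense subgroup $\Z\subset\widehat{\Z}$ under the degree map and contains its kernel), continuity of the path maps shows that $Z$ is also the closure of $\bigcup_{\bar x,\gamma}\gamma(\Gal(\bar x/x))$; I use whichever description is convenient.

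For the inclusion $Z\subseteq D_{X/S}$, fix $x\in\lvert X\rvert$ and a path $\gamma$. The reduced point $\{x\}$ is a locally closed integral normal subscheme of $X$ isomorphic to $\Spec\kappa(x)$, and $\Spec\kappa(x)$ is finite, hence proper, over $S$; choose a finite stratification of $X$ into integral normal subschemes one of whose members is this $\Spec\kappa(x)$. Being its own normal compactification over $S$ and having a single point, $D_{\Spec\kappa(x)/S}=D^{\naive}_{\Spec\kappa(x)/S}=\pi_1(\Spec\kappa(x),\bar x)=\Gal(\bar x/x)$. By Remark \ref{r.D} (1), $\gamma(\Gal(\bar x/x))\subseteq D_{X/S}$, so $\gamma(W(\bar x/x))\subseteq D_{X/S}$; since $D_{X/S}$ is closed, $Z\subseteq D_{X/S}$.

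For the inclusion $D_{X/S}\subseteq Z$, both sides are closed in the profinite group $\pi_1(X,\bar a)$, so it suffices to show that for every open normal subgroup $N$, with $q\colon\pi_1(X,\bar a)\twoheadrightarrow G\colonequals\pi_1(X,\bar a)/N$ the quotient and $f\colon Y\to X$ the corresponding connected Galois cover of group $G$, one has $q(D_{X/S})\subseteq q(Z)$. Unwinding definitions, $q(Z)=\bigcup_{y\in\lvert Y\rvert}D_y$, the conjugation-stable union of the decomposition groups $D_y=\Gal(\kappa(y)/\kappa(f(y)))\subseteq G$. By Remark \ref{r.D} (1), applied to a finite stratification $X=\bigcup_\alpha X_\alpha$ into integral normal subschemes to be chosen below together with a normal compactification $\bar X_\alpha$ of each $X_\alpha$, one has $q(D_{X/S})\subseteq\bigcup_\alpha q\gamma_\alpha(D^{\naive}_{X_\alpha/S})$, with $D^{\naive}_{X_\alpha/S}\subseteq D_{X_\alpha,\bar X_\alpha}$ and $q\gamma_\alpha(D_{X_\alpha,\bar X_\alpha})=\bigcup_{x'\in\lvert\bar X_\alpha\rvert}q\gamma_\alpha(D_{X_\alpha,\bar X_\alpha,x',\bar b})$. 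For $x'\in\lvert X_\alpha\rvert\subseteq\lvert X\rvert$ this last term lies in $q(Z)$, since then $(X_\alpha)_{(x')}=(\bar X_\alpha)_{(x')}$ is Henselian local with fundamental group $\Gal(\bar x'/x')$, so $D_{X_\alpha,\bar X_\alpha,x',\bar b}$ is the image of a local Galois group. The whole matter thus reduces to choosing the stratification and compactifications so that every \emph{boundary} term $q\gamma_\alpha(D_{X_\alpha,\bar X_\alpha,x',\bar b})$, $x'\in\lvert\bar X_\alpha\rvert\setminus X_\alpha$, lies in $q(Z)$.

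Here the machinery of the proof of Theorem \ref{t.main} enters. Following Proposition \ref{p.key} (Gabber's equivariant alterations), the tame computation in the proof of Proposition \ref{p.NCD}, Lemma \ref{l.uh}, and Noetherian induction, one may refine the stratification so that each $X_\alpha$ has a normal compactification $\bar X_\alpha$ which is --- up to a universal homeomorphism, harmless by Remark \ref{r.triv} (3) --- regular with $\bar X_\alpha\setminus X_\alpha$ a normal crossing divisor $D$ along which $Y\times_X X_\alpha\to X_\alpha$ is tamely ramified. For $x'\in D_J^*$ the map $\pi_1((X_\alpha)_{(x')},\bar b)\to G$ then factors through the tame fundamental group, an extension of $\Gal(\bar x'/x')$ by $\prod_\ell\Z_\ell(1)^{\lvert J\rvert}$ (Abhyankar's lemma), whose image in $G$ is generated by the tame-inertia generators and a lift of Frobenius. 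One shows each of these lies in $q(Z)$ by restricting the cover to a curve through $x'$ transverse to a branch of $D$ --- or, for $\lvert J\rvert\ge 2$, to a lower-dimensional slice, with an induction on $\lvert J\rvert$ --- on which it is a local monodromy element, hence a Frobenius, i.e.\ a decomposition-group element at an interior locally closed point of $X$, by Chebotarev's density theorem on that curve (a variety over $k$ or $K$); this is the curve argument in Deligne's proof of \cite[Th\'eor\`eme 9.8]{Deligne}. Boundary points lying over the generic fibre are treated separately, using that their local fundamental groups factor through $\pi_1$ of the generic fibre and, as in the proof of Theorem \ref{t.ind}, spreading out over a variety over the finite field $k$. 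This gives $q\gamma_\alpha(D^{\naive}_{X_\alpha/S})\subseteq q(Z)$ for each $\alpha$, hence $q(D_{X/S})\subseteq q(Z)$ and, as $N$ varies, $D_{X/S}\subseteq Z$. The main obstacle is precisely this last paragraph: carrying out the reduction to the tame normal-crossings situation and the subsequent Chebotarev argument through the boundary points, and keeping the special and generic fibres under uniform control.
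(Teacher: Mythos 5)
Your first inclusion ($Z\subseteq D_{X/S}$ via a point stratum and Remark \ref{r.D} (1)) is fine, and so is the reduction of the reverse inclusion to finite quotients $G$. The gap is in your last paragraph, as you yourself flag: the direct geometric argument for $q(D_{X/S})\subseteq q(Z)$ is not carried out, and it cannot be completed along the lines you sketch. Two concrete problems. First, the alteration argument of Proposition \ref{p.key} produces the tame normal-crossings situation on the equivariant cover $\bar Z$, not on $\bar X_\alpha=\bar Z/H$; since $H$ need not act freely outside $V_0$, your boundary points $x'$ live on a quotient where regularity, the normal-crossings condition, and tameness of $Y\times_X X_\alpha$ all fail in general, so the analysis would have to be redone on the stack $[\bar Z/H]$. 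Second, and more seriously, the Chebotarev step does not exist over the generic fibre: for a variety (or curve) over the local field $K$ there is no density theorem asserting that the images of the decomposition groups at closed points exhaust the image of $\pi_1$; Remark \ref{r.D} (4) notes that $D_{X/S}\subsetneq\pi_1(X,\bar a)$ can occur even for $X$ proper over $S$, so no naive density of interior Frobenii can hold, and the entire difficulty of the corollary is concentrated exactly where you invoke ``Chebotarev on that curve''.

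The missing idea is the paper's character-theoretic reduction, which uses Theorem \ref{t.main} as a black box rather than reworking its proof. Both $C(G)\colonequals q(Z)$ and $D(G)\colonequals q(D_{X/S})$ are conjugation-stable unions of subgroups of $G$, so $D(G)\subseteq C(G)$ holds as soon as every pair of $\Qlb$-characters $\chi,\chi'$ of $G$ with $\chi|_{C(G)}=\chi'|_{C(G)}$ also satisfies $\chi|_{D(G)}=\chi'|_{D(G)}$ (otherwise a virtual character vanishing on $C(G)$ but not at some $g\in D(G)$ would exist, and splitting it into its positive and negative parts gives a contradicting pair). Now agreement of $\chi$ and $\chi'$ on $C(G)$ says exactly that the corresponding lisse sheaves $\cF,\cF'$ form a compatible system (with $Q=\Qlb$). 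Theorem \ref{t.main} then yields a stratification and normal compactifications on which this system is compatible along the boundary; since $D_{X/S}\subseteq\bigcup_{\alpha,\gamma_\alpha}\gamma_\alpha(D_{X_\alpha,\bar X_\alpha})$ and, in the finite quotient $G$, the image of $\pi_1((X_\alpha)_{(x)},\bar b)$ equals that of $W((X_\alpha)_{(x)},\bar b)$, compatibility on $\bar X_\alpha$ gives $\chi=\chi'$ on $D(G)$. This entirely avoids having to conjugate boundary inertia and Frobenius lifts into interior decomposition groups.
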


In the corollary, we may replace $W(\bar x/x)$ by $W^{\ge
    N}(\bar x/x)$, which is a dense subset of $W(\bar x/x)$ for the
    profinite topology. Moreover, we may restrict to closed points $x\in X$ as in Remark \ref{r.closed}.

\begin{proof}
Let $C$ be the closure of $\bigcup_{\bar x,\gamma}\gamma(W(\bar x/x))$. We
have $C\subseteq D_{X/S}$. Let $G$ be a finite quotient of $\pi_1(X,\bar a)$
and let $C(G)$ and $D(G)$ denote respectively the images of $C$ and
$D_{X/S}$ in $G$. It suffices to show that for any pair of $\Qlb$-characters
$\chi$ and $\chi'$ of $G$ satisfying $\chi|_{C(G)}=\chi'|_{C(G)}$, we have
$\chi|_{D(G)}=\chi'|_{D(G)}$. Let $\cF$ and $\cF'$ be the corresponding
lisse $\Qlb$-sheaves on $X$. Then $\cF$ and $\cF'$ are compatible (for
$Q=\Qlb$). We apply Theorem \ref{t.main}. Since $D_{X/S}\subseteq
\bigcup_{\alpha,\gamma_\alpha} \gamma_\alpha(D^\naive_{X_\alpha/S})\subseteq
\bigcup_{\alpha,\gamma_\alpha} \gamma_\alpha(D_{X_\alpha,\bar X_\alpha})$,
every $g\in D(G)$ is in the image of some $\pi_1((X_\alpha)_{(x)},\bar b)$
for $x$ closed in $\bar X_\alpha$, which equals the image of
$W((X_\alpha)_{(x)},\bar b)$. Thus $\chi|_{D(G)}=\chi'|_{D(G)}$.
\end{proof}

\begin{bibdiv}
\begin{biblist}
\bib{SGA1}{collection}{
   label={SGA1},
   title={Rev\^etements \'etales et groupe fondamental (SGA 1)},
   language={French},
   series={Documents Math\'ematiques (Paris) [Mathematical Documents (Paris)]},
   volume={3},
   note={S\'eminaire de g\'eom\'etrie alg\'ebrique du Bois Marie 1960--61.
   [Algebraic Geometry Seminar of Bois Marie 1960--61];
   Directed by A. Grothendieck;
   With two papers by M. Raynaud;
   Updated and annotated reprint of the 1971 original [Lecture Notes in
   Math., 224, Springer, Berlin;  MR0354651 (50 \#7129)]},
   publisher={Soci\'et\'e Math\'ematique de France, Paris},
   date={2003},
   pages={xviii+327},
   isbn={2-85629-141-4},
   review={\MR{2017446}},
}

\bib{SGA4-3}{book}{
   label={SGA4-3},
   title={Th\'eorie des topos et cohomologie \'etale des sch\'emas. Tome 3},
   language={French},
   series={Lecture Notes in Mathematics, Vol. 305},
   note={S\'eminaire de G\'eom\'etrie Alg\'ebrique du Bois-Marie 1963--1964 (SGA 4);
   Dirig\'e par M. Artin, A. Grothendieck et J. L. Verdier. Avec la
   collaboration de P. Deligne et B. Saint-Donat},
   publisher={Springer-Verlag, Berlin-New York},
   date={1973},
   pages={vi+640},
   review={\MR{0354654}},
}

\bib{SGA7-1}{book}{
   label={SGA7-1},
   title={Groupes de monodromie en g\'eom\'etrie alg\'ebrique. I},
   language={French},
   series={Lecture Notes in Mathematics, Vol. 288},
   note={S\'eminaire de G\'eom\'etrie Alg\'ebrique du Bois-Marie 1967--1969 (SGA 7
   I);
   Dirig\'e par A. Grothendieck. Avec la collaboration de M. Raynaud et D. S.
   Rim},
   publisher={Springer-Verlag, Berlin-New York},
   date={1972},
   pages={viii+523},
   review={\MR{0354656}},
}

\bib{SGA7-2}{book}{
   label={SGA7-2},
   title={Groupes de monodromie en g\'eom\'etrie alg\'ebrique. II},
   author={Deligne, Pierre},
   author={Katz, Nicholas},
   language={French},
   series={Lecture Notes in Mathematics, Vol. 340},
   note={S\'eminaire de G\'eom\'etrie Alg\'ebrique du Bois-Marie 1967--1969 (SGA 7
   II)},
   publisher={Springer-Verlag, Berlin-New York},
   date={1973},
   pages={x+438},
   review={\MR{0354657}},
}

\bib{CL}{article}{
   author={Chiarellotto, Bruno},
   author={Lazda, Christopher},
   title={Around $\ell$-independence},
   journal={Compos. Math.},
   volume={154},
   date={2018},
   number={1},
   pages={223--248},
   issn={0010-437X},
   review={\MR{3719248}},
}

\bib{dJ2}{article}{
   author={de Jong, A. Johan},
   title={Families of curves and alterations},
   journal={Ann. Inst. Fourier (Grenoble)},
   volume={47},
   date={1997},
   number={2},
   pages={599--621},
   issn={0373-0956},
   review={\MR{1450427 (98f:14019)}},
}

\bib{Deligne}{article}{
   author={Deligne, Pierre},
   title={Les constantes des \'equations fonctionnelles des fonctions $L$},
   language={French},
   conference={
      title={Modular functions of one variable, II},
      address={Proc. Internat. Summer School, Univ. Antwerp, Antwerp},
      date={1972},
   },
   book={
      publisher={Springer, Berlin},
   },
   date={1973},
   pages={501--597. Lecture Notes in Math., Vol. 349},
   review={\MR{0349635}},
}

\bib{WeilII}{article}{
   author={Deligne, Pierre},
   title={La conjecture de Weil. II},
   language={French},
   journal={Inst. Hautes \'Etudes Sci. Publ. Math.},
   number={52},
   date={1980},
   pages={137--252},
   issn={0073-8301},
   review={\MR{601520 (83c:14017)}},
}

\bib{DL}{article}{
   author={Deligne, P.},
   author={Lusztig, G.},
   title={Representations of reductive groups over finite fields},
   journal={Ann. of Math. (2)},
   volume={103},
   date={1976},
   number={1},
   pages={103--161},
   issn={0003-486X},
   review={\MR{0393266}},
}

\bib{DE}{article}{
   author={Esnault, H\'el\`ene},
   title={Deligne's integrality theorem in unequal characteristic and
   rational points over finite fields},
   note={With an appendix by Pierre Deligne and Esnault},
   journal={Ann. of Math. (2)},
   volume={164},
   date={2006},
   number={2},
   pages={715--730},
   issn={0003-486X},
   review={\MR{2247971}},
}

\bib{Gabber}{article}{
   author={Fujiwara, Kazuhiro},
   title={Independence of $l$ for intersection cohomology (after Gabber)},
   conference={
      title={Algebraic geometry 2000, Azumino (Hotaka)},
   },
   book={
      series={Adv. Stud. Pure Math.},
      volume={36},
      publisher={Math. Soc. Japan},
      place={Tokyo},
   },
   date={2002},
   pages={145--151},
   review={\MR{1971515 (2004c:14038)}},
}

\bib{Gabberpure}{article}{
   author={Fujiwara, Kazuhiro},
   title={A proof of the absolute purity conjecture (after Gabber)},
   conference={
      title={Algebraic geometry 2000, Azumino (Hotaka)},
   },
   book={
      series={Adv. Stud. Pure Math.},
      volume={36},
      publisher={Math. Soc. Japan, Tokyo},
   },
   date={2002},
   pages={153--183},
   review={\MR{1971516}},
}

\bib{Grothendieck}{article}{
   author={Grothendieck, A.},
   title={Classes de Chern et representations linearies des groupes
   discrets},
   language={French},
   conference={
      title={Dix expos\'{e}s sur la cohomologie des sch\'{e}mas},
   },
   book={
      series={Adv. Stud. Pure Math.},
      volume={3},
      publisher={North-Holland, Amsterdam},
   },
   date={1968},
   pages={215--305},
   review={\MR{265370}},
}

\bib{Guo}{article}{
   author={Guo, Ning},
   title={Wildly compatible systems and six operations},
   note={Preprint, arXiv:1801.06065},
}

\bib{IllBrauer}{article}{
   author={Illusie, Luc},
   title={Th\'eorie de Brauer et caract\'eristique d'Euler-Poincar\'e (d'apr\`es P.
   Deligne)},
   language={French},
   conference={
      title={The Euler-Poincar\'e characteristic (French)},
   },
   book={
      series={Ast\'erisque},
      volume={82},
      publisher={Soc. Math. France, Paris},
   },
   date={1981},
   pages={161--172},
   review={\MR{629127}},
}

\bib{IllAst}{article}{
   author={Illusie, Luc},
   title={An overview of the work of K. Fujiwara, K. Kato, and C. Nakayama
   on logarithmic \'etale cohomology},
   note={Cohomologies $p$-adiques et applications arithm\'etiques, II},
   journal={Ast\'erisque},
   number={279},
   date={2002},
   pages={271--322},
   issn={0303-1179},
   review={\MR{1922832}},
}

\bib{IllMisc}{article}{
   author={Illusie, Luc},
   title={Miscellany on traces in $\ell$-adic cohomology: a survey},
   journal={Jpn. J. Math.},
   volume={1},
   date={2006},
   number={1},
   pages={107--136},
   issn={0289-2316},
   review={\MR{2261063}},
}

\bib{IZ}{article}{
   author={Illusie, Luc},
   author={Zheng, Weizhe},
   title={Odds and ends on finite group actions and traces},
   journal={Int. Math. Res. Not. IMRN},
   volume={2013},
   date={2013},
   number={1},
   pages={1--62},
   issn={1073-7928},
   review={\MR{3041694}},
}

\bib{Ito}{article}{
   author={Ito, Tetsushi},
   title={Weight-monodromy conjecture over equal characteristic local
   fields},
   journal={Amer. J. Math.},
   volume={127},
   date={2005},
   number={3},
   pages={647--658},
   issn={0002-9327},
   review={\MR{2141647}},
}

\bib{Laumon}{article}{
   author={Laumon, G\'erard},
   title={Comparaison de caract\'eristiques d'Euler-Poincar\'e en cohomologie
   $l$-adique},
   language={French, with English summary},
   journal={C. R. Acad. Sci. Paris S\'er. I Math.},
   volume={292},
   date={1981},
   number={3},
   pages={209--212},
   issn={0151-0509},
   review={\MR{610321}},
}

\bib{LMB}{book}{
   author={Laumon, G{\'e}rard},
   author={Moret-Bailly, Laurent},
   title={Champs alg\'ebriques},
   series={Ergebnisse der Mathematik und ihrer Grenzgebiete. 3. Folge. A
   Series of Modern Surveys in Mathematics},
   volume={39},
   publisher={Springer-Verlag},
   place={Berlin},
   date={2000},
   pages={xii+208},
   isbn={3-540-65761-4},
   review={\MR{1771927 (2001f:14006)}},
}

\bib{LZ}{article}{
    author={Lu, Qing},
    author={Zheng, Weizhe},
    title={$\ell$-independence over Henselian valuation fields},
    note={Preprint},
}

\bib{Ochiai}{article}{
   author={Ochiai, Tadashi},
   title={$l$-independence of the trace of monodromy},
   journal={Math. Ann.},
   volume={315},
   date={1999},
   number={2},
   pages={321--340},
   issn={0025-5831},
   review={\MR{1715253}},
}

\bib{Riou}{article}{
   author={Riou, Jo\"el},
   title={Expos\'e XVI. Classes de Chern, morphismes de Gysin, puret\'e absolue},
   language={French},
   note={In Travaux de Gabber sur l'uniformisation locale et la cohomologie
   \'etale des sch\'emas quasi-excellents},
   journal={Ast\'erisque},
   number={363-364},
   date={2014},
   pages={301--349},
   issn={0303-1179},
   isbn={978-2-85629-790-2},
   review={\MR{3329786}},
}

\bib{SY}{article}{
   author={Saito, Takeshi},
   author={Yatagawa, Yuri},
   title={Wild ramification determines the characteristic cycle},
   language={English, with English and French summaries},
   journal={Ann. Sci. \'Ec. Norm. Sup\'er. (4)},
   volume={50},
   date={2017},
   number={4},
   pages={1065--1079},
   issn={0012-9593},
   review={\MR{3679621}},
}

\bib{Serre}{article}{
   author={Serre, Jean-Pierre},
   title={Facteurs locaux des fonctions z\^eta des variet\'es alg\'ebriques
   (d\'efinitions et conjectures)},
   language={French},
   conference={
      title={S\'eminaire Delange-Pisot-Poitou. 11e ann\'ee: 1969/70. Th\'eorie des
      nombres. Fasc. 1: Expos\'es 1 \`a 15; Fasc. 2: Expos\'es 16 \`a 24},
   },
   book={
      publisher={Secr\'etariat Math., Paris},
   },
   date={1970},
   pages={15},
   review={\MR{3618526}},
}

\bib{ST}{article}{
   author={Serre, Jean-Pierre},
   author={Tate, John},
   title={Good reduction of abelian varieties},
   journal={Ann. of Math. (2)},
   volume={88},
   date={1968},
   pages={492--517},
   issn={0003-486X},
   review={\MR{0236190}},
}

\bib{Terasoma}{article}{
   author={Terasoma, Tomohide},
   title={Monodromy weight filtration is independent of $l$},
   note={Preprint, arXiv:math/9802051},
}

\bib{Vidal1}{article}{
   author={Vidal, Isabelle},
   title={Th\'eorie de Brauer et conducteur de Swan},
   language={French, with French summary},
   journal={J. Algebraic Geom.},
   volume={13},
   date={2004},
   number={2},
   pages={349--391},
   issn={1056-3911},
   review={\MR{2047703}},
}

\bib{Vidal2}{article}{
   author={Vidal, Isabelle},
   title={Courbes nodales et ramification sauvage virtuelle},
   language={French, with English summary},
   journal={Manuscripta Math.},
   volume={118},
   date={2005},
   number={1},
   pages={43--70},
   issn={0025-2611},
   review={\MR{2171291}},
}	

\bib{Yatagawa}{article}{
   author={Yatagawa, Yuri},
   title={Having the same wild ramification is preserved by the direct
   image},
   journal={Manuscripta Math.},
   volume={157},
   date={2018},
   number={1-2},
   pages={233--246},
   issn={0025-2611},
   review={\MR{3845763}},
   doi={10.1007/s00229-017-0992-x},
}

\bib{Zint}{article}{
   author={Zheng, Weizhe},
   title={Sur la cohomologie des faisceaux $l$-adiques entiers sur les corps
   locaux},
   language={French, with English and French summaries},
   journal={Bull. Soc. Math. France},
   volume={136},
   date={2008},
   number={3},
   pages={465--503},
   issn={0037-9484},
   review={\MR{2415350 (2009d:14015)}},
}

\bib{Zind}{article}{
   author={Zheng, Weizhe},
   title={Sur l'ind\'ependance de $l$ en cohomologie $l$-adique sur les
   corps locaux},
   language={French, with English and French summaries},
   journal={Ann. Sci. \'Ec. Norm. Sup\'er. (4)},
   volume={42},
   date={2009},
   number={2},
   pages={291--334},
   issn={0012-9593},
   review={\MR{2518080 (2010i:14032)}},
}

\bib{six}{article}{
   author={Zheng, Weizhe},
   title={Six operations and Lefschetz-Verdier formula for Deligne-Mumford
   stacks},
   journal={Sci. China Math.},
   volume={58},
   date={2015},
   number={3},
   pages={565--632},
   issn={1674-7283},
   review={\MR{3319927}},
   doi={10.1007/s11425-015-4970-z},
}

\bib{comp}{article}{
   author={Zheng, Weizhe},
   title={Companions on Artin stacks},
   journal={Math. Z.},
   volume={292},
   date={2019},
   number={1-2},
   pages={57--81},
   doi={10.1007/s00209-018-2129-7},
}

\end{biblist}
\end{bibdiv}
\end{document}